\def\BL{\mathbb{L}}
\newcommand{\bluefootnote}[1]{
\relax
}
\DeclareRobustCommand{\note}[1]{%
\relax
}
\newcommand{\defnfont}[1]{\textbf{#1}}
\renewcommand{\theenumi}{\roman{enumi}}
\renewcommand{\p@enumii}{\theenumi.}
\title[On classification of modular categories]{On classification of modular categories by rank}
\author{Paul Bruillard}
\email{pjb2357@gmail.com}
\address{Department of Mathematics,
    Texas A\&M University,
    College Station, TX
    U.S.A.}
\curraddr{Pacific Northwest National Laboratory, 902 Battelle Boulevard, Richland, WA U.S.A}
\thanks{\textit{PNNL Information Release:} PNNL-SA-111550.}
\author{Siu-Hung Ng}
\email{rng@math.lsu.edu}
\address{Department of Mathematics, Louisiana State University, Baton Rouge, LA
    U.S.A.}
\thanks{The first, third and fourth authors were partially supported by
NSF grant DMS1108725, and the second author was partially supported by NSF grants
DMS1001566, DMS1303253, and DMS1501179. This project began at a workshop at the American Institute of Mathematics, whose support and hospitality are gratefully acknowledged. It is a pleasure to thank Vladimir Turaev and Pavel Etingof for valuable comments.}
\author{Eric C. Rowell}
\email{rowell@math.tamu.edu}
\address{Department of Mathematics,
    Texas A\&M University,
    College Station, TX
    U.S.A.}
\author{Zhenghan Wang}
\email{zhenghwa@microsoft.com}
\address{Microsoft Research Station Q and Department of Mathematics,
    University of California,
    Santa Barbara, CA
    U.S.A.}
\keywords{Modular categories}
\date{\today}
\dedicatory{}
\begin{document}
\raggedbottom
\begin{abstract}
The feasibility of a classification-by-rank program for modular categories follows from the Rank-Finiteness Theorem.
We develop arithmetic, representation theoretic and algebraic methods for classifying modular
categories by rank. As an application, we determine all
possible fusion rules for all rank=$5$ modular categories and describe the
corresponding monoidal equivalence classes.
\end{abstract}

\maketitle

\section{Introduction}\label{Introduction}
Modular categories arise in a variety of mathematical subjects including topological quantum field theory
\cite{Tu1}, conformal field theory \cite{MS1}, representation theory of quantum groups \cite{BKi}, von Neumann algebras \cite{EK1}, and vertex operator algebras \cite{Hu1}.  They are quantum analogues of finite groups as illustrated by the Cauchy and Rank-Finiteness theorems \cite{BNRW1}.
Classification of low-rank modular categories is a first step in a structure theory for modular categories parallel to group theory.
Besides the intrinsic mathematical aesthetics, another motivation for pursuing a classification of modular categories comes from their application in topological phases of matter and topological quantum computation \cite{W1, W2}.  A classification of modular categories is literally a classification of certain topological phases of matter \cite{WW1, BCFV1}.

The first success of the classification program was the complete classification of unitary modular categories up to rank=$4$ in \cite{RSW}.  That such a program is theoretically feasible follows from the Rank-Finiteness Theorem (which we proved for modular categories in \cite{BNRW1}, and extended to pre-modular categories in \cite[Appendix]{B2}): there are only finitely many inequivalent modular categories of a given rank $r$.
In this paper, we develop arithmetic, representation theoretic and algebraic tools for a classification-by-rank program.  As an application we complete a classification of all modular categories of rank=$5$ (up to monoidal equivalence) in Section \ref{Applications: Rank 5 Classification}.

A modular category $\mathcal{C}$ is a non-degenerate ribbon fusion category over $\mathbb{C}$
  \cite{Tu1, BKi}.  Let $\Pi_\CC$ be the set of isomorphism classes of simple objects of the modular category $\CC$. The \defnfont{rank} of $\CC$ is the finite number $r=|\Pi_\CC|$.  Each modular category $\mathcal{C}$ leads to a $(2+1)$-dimensional
  topological  quantum field theory $(V_\mathcal{C}, Z_\mathcal{C})$, in particular colored
  framed link invariants \cite{Tu1}.  The invariant $\{d_a\}$ for the unknot
  colored by the label $a\in \Pi_\CC$ is called the {\it quantum
  dimension} of the label.
  The invariant of the Hopf
  link colored by $a,b$ will be denoted as $S_{ab}$. The link invariant of the unknot with a right-handed
  kink colored by $a$ is $\theta_a\cdot d_a$ for some root of unity $\theta_a$,
  which is called the {\it topological twist} of the label $a$.  The topological
  twists form a diagonal matrix $T=(\delta_{ab}\theta_a), a,b\in \Pi_\CC$.   The
  $S$-matrix and $T$-matrix together lead to a projective representation of the
  modular group ${\rm SL}(2,\Z)$ by sending the generating matrices

  \begin{equation*}
     \fs=\mtx{0 & -1\\ 1 & 0},\qquad \ft=\mtx{1 & 1\\ 0 & 1}
  \end{equation*}
  to $S,T$, respectively \cite{Tu1, BKi}.  Amazingly, the kernel of this projective representation of $\CC$ is always a congruence
  subgroup of ${\rm SL}(2,\Z)$ \cite{NS3}.  The $S$-matrix determines the fusion rules through the Verlinde formula, and the $T$-matrix is of finite order $\ord(T)$ by
  Vafa's theorem \cite{BKi}.  Together, the pair $S,T$ are called the \emph{modular data} of the category $\CC$.

Modular categories are fusion categories with additional braiding and
  pivotal structures \cite{ENO1,Tu1,BKi}.  These extra structures endow them with some
  \lq\lq abelian-ness" which makes the classification of modular categories easier.
  The abelian-ness of modular categories first
  manifests itself in the braiding: the tensor product is commutative up to
  functorial isomorphism.  But a deeper sense of
  abelian-ness is revealed in the
  Galois group of  the number field $\mbbK_{\mathcal{C}}$ obtained by adjoining all
  matrix entries of $S$ to $\mathbb{Q}$: $\mbbK_{\mathcal{C}}$ is an abelian
  extension of $\mathbb{Q}$ \cite{BG1, RSW}.  Moreover, its Galois group is
  isomorphic to an abelian subgroup of the symmetric group $\mathfrak{S}_r$, where $r$ is the rank of
  $\mathcal{C}$.  This profound observation permits the application of deep number theory to
  the classification of modular categories.

The content of the paper is as follows.  Section \ref{Modular Categories} is a collection of necessary results on fusion and modular categories.  We define admissible modular data as a pair  of matrices $S,T$ satisfying algebraic constraints with an eye towards the characterization of realizable modular data.

In Section \ref{Arithmetic Properties of Modular Categories} we develop general arithmetic constraints on admissible modular data.  One improvement to the approach in \cite{RSW} is the combining of Galois symmetry of $S,T$ matrices with the knowledge of the representation theory of ${\rm SL}(2,\mathbb{Z})$.  An important observation is:

\begin{replemma}{l:1}
    Let $\CC$ be a modular category of rank $r$ and $\rho: {\rm SL}(2,\Z) \to \GLC{r}$ a
    modular representation of $\CC$, i.e. a lifting of projective representation of
    $\CC$. Then $\rho$ cannot be isomorphic to a direct sum of two representations
    with disjoint $\ft$-spectra.
\end{replemma}

Finally in Section \ref{Applications: Rank 5 Classification}, we combine the analysis of Galois action on the $S$-matrix and
  ${\rm SL}(2,\Z)$ representation to determine all possible fusion rules for all rank=$5$
  modular categories and describe their classification up to monoidal equivalence.

Our main result is:

   \begin{reptheorem}{thm:fusionrules}
      Suppose $\mcC$ is a modular category of rank $5$.  Then $\mcC$ is Grothendieck
      equivalent to  one of the following:
      \begin{enumerate}
        \item $SU(2)_4$
        \item $SU(2)_9/\Z_2$
        \item $SU(5)_1$
        \item $SU(3)_4/\Z_3$
      \end{enumerate}
    \end{reptheorem}

In this paper we only classify these modular categories up to monoidal equivalence, but a complete list of all modular categories with the above fusion rules as done in \cite{RSW} is possible.  However, the details are not straightforward, so we will leave it to a future publication.

A complete classification of the low-rank cases provides general insight for structure theory of modular categories. Such a classification is also very useful for the theory of topological phases of matter, and could shed light on the open problem that whether or not there are exotic modular categories, i.e., modular categories that are not closely related to the well-known quantum group construction \cite{HRW1}.  However, complete classification beyond rank=$5$ seems to be very difficult.

Topological phases of matter are phases of matter that lie beyond Landau's
symmetry breaking and local order parameter paradigm for the classification of states
  of matter.  Physicists propose to use the $S,T$ matrices as order parameters
  for the classification of topological phases of matter \cite{LWWY1}. Therefore, a natural
  question is if the $S,T$ matrices determine the modular category.  We believe they do.  The $S,T$ matrices satisfy many constraints, and a
  pair of matrices $S,T$ with those constraints are called  {\it admissible modular
  data}. It is interesting to characterize admissible modular data that can be realized by modular
  categories.

Modular categories form part of the mathematical foundations of topological quantum computation.
The classification program of modular categories initiated in this paper will lead to a deeper understanding of their structure and their enchanting relations to other fields, thus pave the way for applications to a futuristic field \emph{anyonics} broadly defined as the science and technology that cover the development, behavior, and application of anyonic devices.

\section{Modular Categories}
  \label{Modular Categories}
We follow the same conventions for modular categories as in \cite{BNRW1}. Most of the results below can be
found in  \cite{Tu1, BKi, ENO1, NS1, NS2, NS3, BNRW1} and the references
therein.  All fusion and modular categories are over the complex numbers $\mathbb{C}$ in this paper unless stated otherwise.

\subsection{Basic Invariants}
\label{subsection: Further Consequences and Properties}
  \subsubsection{Grothendieck Ring and Dimensions}
    The \defnfont{Grothendieck ring} $K_0(\CC)$ of a fusion category $\CC$ is the
    $\BZ$-ring generated by $\Pi_\CC$ with multiplication induced from
    $\o$.  The structure coefficients of $K_0(\CC)$ are obtained from:
    \begin{equation*}
      V_i \o V_j \cong \bigoplus_{k \in \Pi_\CC} N_{i,j}^k \,V_k
    \end{equation*}

    where $N_{i,j}^k = \dim(\Hom_{\mcC}\(V_k, V_i\o V_j\))$. This family of non-negative
    integers $\{N_{i,j}^k\}_{i,j,k \in \Pi_\CC}$ is called the \textit{fusion
    rules} of $\CC$.

    In a braided fusion category, $K_0(\CC)$ is a commutative ring and the fusion
    rules satisfy the symmetries:
    \begin{equation}
      \label{fusion symmetries}
      N_{i,j}^k=N_{j,i}^k=N_{i,k^*}^{j^*}=N_{i^*,j^*}^{k^*},\quad
      N_{i,j}^0=\delta_{i,j^*}.
    \end{equation}

    The \defnfont{fusion matrix} $N_i$ associated to $V_i$, defined by
    $(N_i)_{k,j}=N_{i,j}^k$, is
    an integral matrix with non-negative entries. In the braided fusion setting,
    these matrices are normal and mutually commuting. The largest real eigenvalue of
    $N_i$ is called the \defnfont{Frobenius-Perron
    dimension} of $V_i$ and is denoted by $\FPdim(V_i)$.
    Moreover, $\FPdim$ can be extended to a $\BZ$-ring homomorphism
    from $K_0(\CC)$ to $\BR$ and is the unique such homomorphism that is positive
    (real-valued) on $\Pi_\CC$ (see \cite{ENO1}). The \defnfont{Frobenius-Perron dimension} of
    $\CC$ is defined as
    \begin{equation*}
      \FPdim(\CC) = \sum_{i \in \Pi_\CC} \FPdim(V_i)^2\,.
    \end{equation*}

    \begin{defn}
      A fusion category $\mcC$ is said to be
      \begin{enumerate}
      \item \defnfont{weakly integral} if $\FPdim\(\mcC\)\in\mbbZ$.
        \item \defnfont{integral} if $\FPdim\(V_{j}\)\in\mbbZ$ for all
        $j\in\P_{\mcC}$.
        \item \defnfont{pointed} if $\FPdim\(V_{j}\)=1$ for all
        $j\in\P_{\mcC}$.
      \end{enumerate}

      Furthermore, if $\FPdim\(V\)=1$, then $V$ is \defnfont{invertible}.
    \end{defn}

    Let $\CC$ be a pivotal category. It follows from \cite[Prop. 2.9]{ENO1} that
    $d^{r}(V\du)=\ol{d^{r}(V)}$ is an algebraic integer for any $V \in \CC$. The
    \defnfont{global dimension} of $\CC$ is defined by
    \begin{equation*}
      D^2 = \sum_{i \in \Pi_\CC} |d^{r}(V_i)|^2.
    \end{equation*}

    By \cite{M2, ENO1}, a pivotal structure of a fusion category $\CC$ is spherical
    if, and only if, $d^{r}(V)$ is real for all $V \in \CC$. In this case,
    $d^{r}\(V\)=d^{\ell}\(V\)$ and we simply write $d\(V\)$ to refer to the
    dimension of $V$. Furthermore for $i\in\P_{\mcC}$, we adopt the shorthand
    $d_{i}=d\(V_{i}\)$.

    A fusion category $\CC$ is called \defnfont{pseudo-unitary} if $D^{2}=\FPdim(\CC)$.
    For a pseudo-unitary fusion category $\CC$, it has been
    shown in \cite{ENO1} that there exists a unique spherical structure of $\CC$
    such that $d\(V\) = \FPdim(V)$ for all objects $V \in \CC$.

  \subsubsection{Spherical and Ribbon Structures}

The set of isomorphism classes of invertible objects $G(\CC)$ in a fusion category $\CC$ forms a group in
    $K_0(\CC)$ where $i\inv = i^*$ for $i \in G(\CC)$. For modular categories $\CC$, the group
    $G(\CC)$ parameterizes pivotal structures on the underlying braided
    fusion category. Under such a correspondence, the
      inequivalent spherical structures of $\CC$ map onto the maximal elementary
      abelian 2-subgroup, $\Omega_2 G(\CC)$, of $G(\CC)$ \cite{BNRW1}.

In any ribbon fusion category $\CC$ the associated ribbon structure, $\theta$, has finite order.
    This celebrated fact is part of Vafa's Theorem
    (see \cite{Va1, BKi}) in the case of modular categories.
    However, any ribbon category embeds in a modular category
    (via Drinfeld centers, see \cite{M2}) so the result holds generally. Observe
    that, $\theta_{V_i} = \th_i \id_{V_i}$ for some root of unity $\th_i \in \BC$.
    Since $\theta_\1 = \id_\1$, $\th_0 = 1$. The $T$-\defnfont{matrix} of
    $\CC$ is defined by $T_{ij} = \delta_{ij} \th_j$ for $i,j \in \Pi_\CC$. The
    \textbf{balancing equation}:
    \begin{equation}
      \label{Balancing}
      \theta_i\theta_j S_{ij}=\sum_{k\in\Pi_\CC} N_{i^*j}^kd_k\theta_k
    \end{equation}

    is a useful algebraic consequence, holding in any premodular category.
    The pair $(S,T)$ of $S$ and $T$-matrices will be called the \defnfont{modular
    data} of a given modular category $\CC$.

  \subsubsection{Modular Data and ${\rm SL}(2,\Z)$ Representations}
   \begin{definition}
      For a pair of matrices $(S,T)$ for
      which there exists a modular category with modular data $(S,T)$, we will say
      $(S,T)$ is \defnfont{realizable modular data}.
    \end{definition}

    The fusion rules $\{N_{i,j}^k\}_{i,j,k \in \Pi_\CC}$ of
    $\CC$ can be written in terms of the $S$-matrix, via the
    \defnfont{Verlinde formula} \cite{BKi}:
    \begin{equation}
      \label{Verlinde Formula}
      N_{i,j}^k = \frac{1}{D^{2}} \sum_{a \in \Pi_\CC} \frac{S_{ia} S_{ja}
      S_{k^*a}}{S_{0a}} \quad\text{for all } i,j,k \in \Pi_\CC\,.
    \end{equation}

    The  modular data $(S, T)$ of a modular category $\CC$ satisfy the conditions:
    \begin{equation}
      \label{eq:STrelations}
      (ST)^3 = p^{+} S^2, \quad S^2=p^{+}p^{-} C, \quad CT=TC,\quad C^2=\id,
    \end{equation}

    where $p^{\pm} = \sum_{i\in \Pi_\CC} d_i^2 \th_i^{\pm 1} $ are called the
    \defnfont{Gauss sums}, and
    $C=\(\delta_{ij^*}\)_{i,j \in \Pi_\CC}$ is called the \defnfont{charge conjugation matrix} of $\CC$.
    In terms of matrix entries, the first equation in (\ref{eq:STrelations})
   gives the \textbf{twist equation}:
    \begin{equation}\label{twisteq}
      p^{+} S_{jk} = \theta_j \theta_k \sum_{i} \theta_i S_{ij} S_{ik}\,.
    \end{equation}
    The quotient $\frac{p^{+}}{p^{-}}$, called the \defnfont{anomaly} of $\CC$,
    is a  root of unity, and
    \begin{equation}
      \label{eq:gausssum}
      p^{+} p^{-}  =D^{2}.
    \end{equation}

    Moreover, $S$ satisfies
    \begin{equation}
      \label{eq:S}
      S_{ij} = S_{ji} \quad \text{and}\quad S_{ij^*} = S_{i^*j}
    \end{equation}

    for all $i, j \in \Pi_\CC$. These equations and the Verlinde formula imply that
    \begin{equation}
      \label{eq:orthogonality}
      S_{ij\du} = \ol{S_{ij}} \quad\text{and}\quad\frac{1}{D^{2}}\sum_{j \in \Pi_\CC}
      S_{ij}\ol{S}_{jk} = \delta_{ik}.
    \end{equation}

    In particular, $S$ is projectively unitary.

    A modular category $\CC$ is called \defnfont{self-dual} if $i=i\du$ for all $i \in
    \Pi_\CC$. In fact, $\mcC$ is self-dual if and only if $S$ is a real matrix.

    Let $D$ be the positive square
    root of $D^{2}$. The Verlinde formula can be rewritten as
    \begin{equation*}
      S N_i S\inv = D_i \quad \text{for }i \in \Pi_\CC
    \end{equation*}

    where $\(D_i\)_{ab} = \delta_{ab} \frac{S_{ia}}{S_{0a}}$. In particular, the
    assignments $\phi_a: i \mapsto \frac{S_{ia}}{S_{0a}}$ for $i \in \Pi_\CC$
    determine (complex) linear characters of $K_0(\CC)$. Since $S$ is non-singular,
    $\{\phi_a\}_{a \in \Pi_\CC}$ is the set of \textit{all} the linear characters of
    $K_0(\CC)$. Observe that $\FPdim$ is a character of $K_0(\CC)$, so that there is
    some $a\in\Pi_\CC$ such that $\FPdim=\phi_a$.  By the unitarity of $S$, we have
    that $\FPdim(\CC)=D^{2}/(d_a)^2$.

    As an abstract group, ${\rm SL}(2,\Z) \cong \langle \fs, \ft \mid \fs^4=1, (\fs \ft)^3
    =\fs^2\rangle$. The standard choice for generators is:
    \begin{equation*}
      \fs := \mtx{0 & -1\\ 1 & 0}\quad \text{and} \quad \ft:=\mtx{1 & 1\\ 0 & 1}\,.
    \end{equation*}

    Let $\eta:\GLC{\Pi_\CC}\to
    \PGLC{\Pi_\CC}$ be the natural surjection. The relations \eqref{eq:STrelations} imply that
    \begin{equation}
      \label{eq:projrep}
      \orho_\CC\colon \fs\mapsto \eta(S)\quad \text{and}\quad \ft\mapsto \eta(T)
    \end{equation}

    defines a projective representation of ${\rm SL}(2,\Z)$.  Since the  modular
    data is an invariant of a modular category, so is the associated projective
    representation type of ${\rm SL}(2,\Z)$.  The following arithmetic properties of this
    projective representation will play an important role in our discussion (cf.
    \cite{NS3}).  Recall that $\BQ_N:=\BQ(\zeta_N)$, where $\zeta_N$ is a primitive $N$th root of unity.
    \begin{thm}
      \label{t:cong1}
      Let $(S,T)$ be the  modular data of the modular category $\CC$ with
      $N=\ord\(T\)$. Then the entries of $S$ are algebraic integers of $\BQ_N$.
      Moreover, $N$ is minimal such that the projective representation $\orho_\CC$ of
      ${\rm SL}(2,\Z)$ associated with the modular data can be factored through $\mathrm{SL}(2,\BZ/N\BZ)$.
      In other words, $\ker\orho_\CC$ is a congruence subgroup of level $N$.
    \end{thm}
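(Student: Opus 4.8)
The statement bundles three claims --- that the $S$-entries are algebraic integers, that they lie in $\BQ_N$, and that the congruence property holds with level exactly $N$ --- and I would prove them roughly in that order, though the last two are genuinely intertwined; the whole package is in essence \cite{NS3}. Integrality is quick: by the diagonalization $SN_iS\inv=D_i$ recorded above, $S_{ij}/S_{0j}$ is an eigenvalue of the fusion matrix $N_i$, which has entries in $\BZ_{\ge 0}$, so $S_{ij}/S_{0j}$ is an algebraic integer; since $d_j=S_{0j}$ is an algebraic integer by \cite{ENO1}, so is $S_{ij}=(S_{ij}/S_{0j})\,d_j$.

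For the cyclotomic field I would set up the Galois symmetry of the modular data. For each fixed $i$, the multiset $\{\,S_{ij}/S_{0j}\mid j\in\Pi_\CC\,\}$ is exactly the spectrum of the integral matrix $N_i$, hence stable under $\mathrm{Gal}(\overline{\BQ}/\BQ)$; since the characters $\phi_a$ exhaust $\Hom(K_0(\CC),\BC)$, every $\sigma$ sends $\phi_a$ to some $\phi_{\hat\sigma(a)}$, which gives a \emph{single} permutation $\hat\sigma$ of $\Pi_\CC$ with $\sigma(S_{ij}/S_{0j})=S_{i\hat\sigma(j)}/S_{0\hat\sigma(j)}$ for all $i,j$, and $\sigma\mapsto\hat\sigma$ an injective homomorphism. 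Combined with $S^{2}=p^{+}p^{-}C$, which makes $\sigma$ act on the columns of $S$ by a monomial rule, this forces $\BQ(S_{ij})$ to be abelian over $\BQ$ (de~Boer--Goeree, Coste--Gannon), hence cyclotomic by Kronecker--Weber. To place the conductor inside $N=\ord(T)$ I would invoke the balancing equation \eqref{Balancing} and the twist equation \eqref{twisteq} --- which tie $\hat\sigma$ to the cyclotomic action $\theta_j\mapsto\theta_j^{t}$ on the twists --- together with $(p^{+}/D)^{2}=p^{+}/p^{-}$ being a root of unity and $p^{+}p^{-}=D^{2}$; the conclusion is that a $\sigma$ fixing $\BQ_N$ has $\hat\sigma=\id$ and fixes every $d_j$, hence fixes every $S_{ij}$, so $S_{ij}\in\BQ_N$.

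For the congruence property, \eqref{eq:STrelations} already gives $\orho_\CC(\fs)^{4}=\id$, $(\orho_\CC(\fs)\orho_\CC(\ft))^{3}=\orho_\CC(\fs)^{2}$, and $\orho_\CC(\ft)^{N}=\eta(T^{N})=\id$, so $\orho_\CC$ satisfies the relations $\fs^{4}$, $(\fs\ft)^{3}\fs^{-2}$, $\ft^{N}$ of $\mathrm{SL}(2,\BZ/N\BZ)$. I would then verify the \emph{remaining}, finitely many, relations of a presentation of $\mathrm{SL}(2,\BZ/N\BZ)$ on $\orho_\CC$, using the Galois symmetry of $(S,T)$ just established together with the Verlinde formula; this shows $\orho_\CC$ factors through $\mathrm{SL}(2,\BZ/N\BZ)$, i.e.\ $\Gamma(N)\subseteq\ker\orho_\CC$, so $\ker\orho_\CC$ is a congruence subgroup. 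Its width at every cusp $g\cdot\infty$ equals $N$, because $g\ft^{w}g\inv\in\ker\orho_\CC\iff\eta(T)^{w}=\id\iff N\mid w$ --- here $\theta_0=1$ prevents $T$ from being a scalar multiple of a matrix of smaller order --- so Wohlfahrt's theorem yields level exactly $N$; and if $\orho_\CC$ factored through $\mathrm{SL}(2,\BZ/M\BZ)$ then $\ft^{M}\in\ker\orho_\CC$, forcing $N\mid M$, which gives minimality.

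I expect the factorization step to be the main obstacle: it requires turning the structural identities of the modular data --- balancing, twist, Verlinde, the Gauss-sum relations, and the Galois symmetry they generate --- into the finitely many group relations that cut $\mathrm{SL}(2,\BZ/N\BZ)$ out of ${\rm SL}(2,\Z)$. Partial forms go back to Bantay and to Coste--Gannon, and the complete statement, with the level pinned to $N$, is \cite{NS3}.
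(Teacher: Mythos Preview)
The paper does not prove this theorem; it is stated as background with the citation ``(cf.\ \cite{NS3})'' immediately preceding it, so there is no in-paper proof to compare against. Your outline is a fair high-level summary of the \cite{NS3} strategy and you correctly flag the factorization through $\mathrm{SL}(2,\BZ/N\BZ)$ as the crux.

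That said, there is a real gap at precisely that crux. You propose to check the remaining relations of a presentation of $\mathrm{SL}(2,\BZ/N\BZ)$ ``using the Galois symmetry of $(S,T)$ \dots\ together with the Verlinde formula,'' but the tools you list do not by themselves yield those relations. The decisive idea in \cite{NS3}, which your sketch omits entirely, is the use of \emph{generalized Frobenius-Schur indicators} $\nu_{n,k}(V)$ (the paper recalls the diagonal case $\nu_n$ in \S\ref{fs indicators}) and their equivariance under an ${\rm SL}(2,\Z)$-action on the indices $(n,k)$; it is this equivariance, combined with the Bantay-type formula expressing $\nu_{n,k}$ in terms of $(S,T)$, that produces exactly the identities needed to kill $\Gamma(N)$. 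Without FS indicators your ``verify the relations'' step is a statement of intent rather than an argument.

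A second point of logical order: in \cite{NS3} the inclusion $\BF_S\subseteq\BQ_N$ is deduced \emph{from} the congruence property (once $\orho_\CC$ factors through $\mathrm{SL}(2,\BZ/N\BZ)$, the full Galois action on $S$ is controlled by automorphisms of that finite group), not established beforehand as an input. Your sketch reverses this, asserting that $\sigma|_{\BQ_N}=\id$ forces $\hat\sigma=\id$ via the balancing and twist equations; that implication is correct but is itself essentially equivalent to the hard step, so invoking it before the congruence property is circular unless you supply an independent argument. Your treatment of integrality and of the level-equals-$N$ claim (via $\theta_0=1$ forcing projective order of $T$ to equal $\ord(T)$) is fine.
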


    \begin{definition}
      A \defnfont{modular representation} of $\CC$ (cf. \cite{NS3}) is a representation
      $\rho$ of  ${\rm SL}(2,\Z)$ which satisfies the commutative diagram:
      \begin{equation*}
        \xymatrix{ {\rm SL}(2,\Z)\ar[r]^-{\rho}\ar[rd]_-{\orho_\CC}
        &\GLC{\Pi_\CC}\ar[d]^-{\eta}\\
        & \PGLC{\Pi_\CC}\,.
        }
      \end{equation*}
    \end{definition}

    Let $\zeta\in \mbbC$ be a fixed $6$-th root of the anomaly
    $\dfrac{p^{+}}{p^{-}}$. For any $12$-th root of unity $x$, it follows from
    \eqref{eq:STrelations} that  the assignments
    \begin{equation}
      \label{eq:repC}
      \rho_x^\zeta: \fs \mapsto \frac{\zeta^3}{x^3 p^{+}} S, \quad \ft \mapsto \frac{x}{\zeta} T
    \end{equation}

    define a  modular representation of $\CC$.  Moreover, $\{\rho_x^\zeta\mid
    x^{12}=1\}$ is the complete set of modular representations of $\CC$ (cf.
    \cite[Sect. 1.3]{DLN1}).  Since $D^2=p^{+}p^{-}$, we have $\zeta^3/p^{+}=\gamma/D$, where $\gamma =\pm 1$. Thus, one can always find a $6$-th root of unity $x$ so that $\rho_x^\zeta: \fs \mapsto S/D$. For the purpose of this paper, we only need to consider the modular representation $\rho$ of $\CC$ which assigns $\fs \to S/D$.    Note also that $\rho_x^\zeta(\fs)$ and $\rho_x^\zeta(\ft)$
    are matrices over a finite abelian extension of $\BQ$. Therefore, modular
    representations of any modular category are defined over the abelian closure
    $\BQA$ of $\BQ$ in $\BC$ (cf. \cite{BG}).

    Let $\rho$ be any modular representation of the modular category $\CC$, and set
    \begin{equation*}
      s = \rho(\fs) \quad \text{and}\quad t = \rho(\ft)\,.
    \end{equation*}

    It is clear that a representation $\rho$ is uniquely determined by the pair
    $(s,t)$, which will be called a \defnfont{normalized modular pair} of $\CC$. In view of the preceding paragraph, there exists a root of unity $y$ such that $(S/D, T/y)$ is a  normalized modular pair of $\CC$.

  \subsubsection{Galois Symmetry}\label{galsymsection}

    Observe that for any choice of a normalized modular pair $(s,t)$, we have
    $\frac{s_{ia}}{s_{0a}}=\frac{S_{ia}}{S_{0a}}=\phi_a(i)$.
    For each $\s \in \Aut(\BQA)$, $\s(\phi_a)$  given by $\s(\phi_a)(i) =
    \s\left(\frac{s_{ia}}{s_{0a}}\right)$ is again a linear character of $K_0(\CC)$
    and hence $\s(\phi_a) = \phi_{\hs(a)}$ for some unique $\hs\in \Sym(\Pi_\CC)$.
    That is,
    \begin{equation}
      \label{eq:galois1}
      \s\left(\frac{s_{ia}}{s_{0a}}\right) = \frac{s_{i\hs(a)}}{s_{0\hs(a)}} \quad \text{for all }i, a\in
      \Pi_\CC\,.
    \end{equation}
    Moreover, there exists a function $\e_\s : \Pi_\CC \to \{\pm 1\}$, which
    depends on the choice of $s$, such that:
    \begin{equation}
      \label{eq:galois2}
     \s(s_{ij}) = \e_{\s}(i) s_{\hs(i) j} = \e_{\s}(j) s_{i \hs(j)} \quad \text{for all }i, j \in \Pi_\CC
    \end{equation}
    (cf.  \cite[App. B]{BG}, \cite{CG1} or \cite[App.]{ENO1}). The group $\Sym(\Pi_\CC)$ will often be written as $\mathfrak{S}_r$ where $r=|\Pi_\CC|$ is the rank of $\CC$.

    The following theorem will be used in the sequel:
    \begin{thm} \label{t:Galois}
      \label{c:galsym}
      Let $\CC$ be a modular  category of rank $r$, with $T$-matrix of order $N$. Suppose $(s,t)$ is a normalized modular pair of $\CC$. Set $t=(\delta_{ij} t_i)$ and $n =\ord (t)$. Then:
      \begin{enumerate}
       \item[(a)] $N \mid n\mid 12 N$ and
       $s,t \in \GL_r(\BQ_n)$.  Moreover,
       \item[(b)] (Galois Symmetry) for $\s\in \Gal(\BQ_n/\BQ)$,
       $
  \s^2(t_i)=t_{\hs(i)}.
  $
      \end{enumerate}
    \end{thm}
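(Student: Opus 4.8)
The plan is to derive both statements from the fundamental relations \eqref{eq:STrelations} together with Theorem \ref{t:cong1} and the Galois symmetry of the $S$-matrix.

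For part (a), I would start from the description of modular representations in \eqref{eq:repC}: any normalized modular pair $(s,t)$ arises as $(\rho_x^\zeta(\fs), \rho_x^\zeta(\ft))$ with $t = \tfrac{x}{\zeta}T$, where $x^{12}=1$ and $\zeta^6 = p^+/p^-$. Hence $t$ differs from $T$ by a scalar $x/\zeta$ which is a $12N'$-th root of unity for suitable $N'$; more precisely, since the anomaly $p^+/p^-$ is a root of unity whose order divides $N$ (this follows from \eqref{eq:STrelations} and Theorem \ref{t:cong1}, as $p^+/p^- = (p^+)^2/D^2$ and the Gauss sums lie in $\BQ_N$), one checks $(x/\zeta)^{12N} = 1$. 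Therefore $t_i$ is a $12N$-th root of unity, while $\theta_i$ is an $N$-th root of unity with $\ord(T) = N$ exactly; this forces $N \mid n \mid 12N$. The containment $s,t \in \GL_r(\BQ_n)$ for $t$ is immediate since its diagonal entries are $n$-th roots of unity, and for $s$ it follows because $s = \tfrac{\zeta^3}{x^3 p^+} S$, the entries of $S$ lie in $\BQ_N \subseteq \BQ_n$ by Theorem \ref{t:cong1}, and the scalar $\zeta^3/(x^3 p^+)$ is (up to sign) $1/D$ with $D^2 = p^+p^- \in \BQ_N$, so one argues the scalar lies in $\BQ_n$ as well — here I would use that $n$ is divisible by $N$ and by the order of $x/\zeta$.

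For part (b), the Galois symmetry of $S$ in \eqref{eq:galois2} gives $\s(s_{0j}) = \e_\s(0)\, s_{0\hs(j)} = \e_\s(j)\, s_{j 0}$ for $j \in \Pi_\CC$, so $\s$ permutes the entries of the first row/column of $s$ up to signs. The key input is the twist equation / the relation $(st)^3 = \lambda s^2$ for the appropriate scalar $\lambda$ (a normalization of $p^+$), which upon extracting the balancing-type identity relates $t_i t_j s_{ij}$ to a sum involving $N_{i^*j}^k d_k \theta_k$; specializing $j = 0$ yields $t_i t_0 s_{i0} = \sum_k N_{i^*}^k s_{k0} t_k$, i.e. a linear relation expressing $t_i s_{i0}$ in terms of the vector $(s_{k0}t_k)_k$ via the fusion matrix $N_{i^*}$. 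Applying $\s$, using \eqref{eq:galois1}, \eqref{eq:galois2} and the fact that $\s$ permutes this data by $\hs$, one deduces $\s(t_i)^2$ transforms like $t_{\hs(i)}$ after the signs $\e_\s$ cancel (they appear squared, or cancel against the $\e_\s$ in the $S$-symmetry). Concretely I expect to establish first $\s(t_i/t_0) \cdot \s(t_0/t_i)$-type consistency and then bootstrap using $t_0 = x/\zeta$ being a scalar fixed data point to pin down $\s^2(t_i) = t_{\hs(i)}$; the appearance of $\s^2$ rather than $\s$ is exactly the reflection of the sign ambiguity $\e_\s$ squaring away.

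The main obstacle will be part (b): carefully tracking the sign functions $\e_\s$ and the normalization scalar through the Galois action so that they cancel in pairs and leave precisely $\s^2$. In particular one must verify that the auxiliary scalar $t_0 = x/\zeta$ behaves correctly under $\s$ — since $x/\zeta$ is a root of unity in $\BQ_n$, $\s(t_0)$ is again such a root of unity, and the relation $(st)^3 = \lambda s^2$ constrains $\s(\lambda)$ in terms of $\s(t_0)^3$ and the anomaly. Threading these together to isolate the clean identity $\s^2(t_i) = t_{\hs(i)}$, rather than something twisted by extra roots of unity or signs, is the delicate computational heart of the argument; the references \cite{CG1}, \cite[App. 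B]{BG}, \cite[App.]{ENO1} should supply the precise form of the sign cocycle needed.
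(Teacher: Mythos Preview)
The paper does not prove this theorem; it simply records that part (a) is established in \cite{NS3} and part (b) in \cite[Thm.~II(iii)]{DLN1}. Your sketch for (a) is on the right track and is essentially how the argument goes: the scalar $x/\zeta$ relating $t$ to $T$ has order dividing $12N$ once one knows the anomaly $p^+/p^-$ has order dividing $N$ (this last fact does require justification, e.g.\ via the Gauss sums lying in $\BQ_N$), and then $N\mid n\mid 12N$ and the field containment follow.

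Part (b) is where there is a real gap. Your plan is to push the Galois action through the balancing or twist equation and watch the signs $\e_\s$ cancel in pairs. But if you actually carry this out, you will see that applying $\s$ to, say, the twist equation \eqref{twisteq} produces sums of the form $\sum_i \s(t_i)\,s_{i\hs(j)}s_{i\hs(k)}$, whereas the untwisted equation at $(\hs(j),\hs(k))$ involves $\sum_i t_i\,s_{i\hs(j)}s_{i\hs(k)}$: the summation index does not get permuted, so you cannot directly compare the two and isolate any relation between $\s(t_i)$ and $t_{\hs(i)}$. The references you cite (\cite{CG1}, \cite[App.~B]{BG}, \cite[App.]{ENO1}) establish the Galois symmetry \eqref{eq:galois2} of $s$, but they say nothing about $t$; the $t$-symmetry is a genuinely deeper statement.

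The proof in \cite{DLN1} uses an ingredient your outline omits: the congruence property of the modular representation (the full strength of Theorem~\ref{t:cong1}, not just $\BF_S\subset\BQ_N$). Because $\rho$ factors through $\SL{n}$, the element $h_a=\mathrm{diag}(a^{-1},a)\in\SL{n}$ (where $\s$ corresponds to $a\in(\BZ/n\BZ)^\times$) acts, and the identity $\fs\ft^a\fs\ft^{a^{-1}}\fs\ft^a=-h_a$ in $\SL{n}$ lets one compute $\rho(h_a)$ in terms of $s$ and $t$. One then shows, using generalized Frobenius--Schur indicators and their Galois equivariance, that $G_\s$ coincides (up to sign) with $\rho(h_a)$; since $h_a\ft h_a^{-1}=\ft^{a^{-2}}$, conjugating $t$ by $G_\s$ produces exactly $\s^{2}(t)$. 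Without invoking the congruence property (or equivalently the indicator machinery that underlies it), there is no evident way to access the diagonal element $h_a$, and the purely algebraic manipulations you propose do not close.
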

Part $(a)$ of Theorem \ref{t:Galois} is proved in \cite{NS3}, whereas
     part $(b)$ is proved in \cite[Thm. II(iii)]{DLN1}.

In the sequel, we will denote by $\BF_A$
    the field extension over $\BQ$ generated by the entries of a complex matrix
    $A$. If $\BF_A/\BQ$ is Galois, then we write $\Gal(A)$ for the Galois
    group $\Gal(\BF_A/\BQ)$.

In this notation, if $(S,T)$ is the modular data of $\CC$, then $\BF_T = \BQ_N$,
    where $N=\ord\(T\)$, and we have  $\BF_S \subseteq \BF_T$ by \thmref{t:cong1}. In
    particular, $\BF_S$ is an abelian Galois extension over $\BQ$.

For any normalized modular pair $(s,t)$ of $\CC$ we have $\BF_t = \BQ_n$, where
    $n=\ord\(t\)$. Moreover, by  \thmref{t:Galois}, $\BF_S\subseteq \BF_s \subseteq \BF_t$. In
    particular, the field extension $\BF_s/\BQ$ is also Galois. The kernel of the
    restriction map $\res:\Gal(t) \to \Gal(S)$ is isomorphic to $\Gal(\BF_t/\BF_S)$.

    The following important lemma is proved in \cite[Prop. 6.5]{DLN1}.
    \begin{lem}
      \label{l:2group}
      Let $\CC$ be a modular category with modular data $(S,T)$. For any
      normalized modular pair $(s,t)$ of $\CC$, $\Gal(\BF_t/\BF_S)$ is an
      elementary 2-group.
    \end{lem}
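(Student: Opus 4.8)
The plan is to prove the stronger statement that every element of $\Gal(\BF_t/\BF_S)$ is an involution. This suffices: since $\BF_t=\BQ_n$ is cyclotomic, $\Gal(\BF_t/\BF_S)$ is a subgroup of the finite abelian group $\Gal(\BQ_n/\BQ)$, and any abelian group of exponent $2$ is an elementary abelian $2$-group. So I fix $\s\in\Gal(\BF_t/\BF_S)$ --- equivalently, using $\BF_t=\BQ_n$ and $\BF_S\subseteq\BF_t$, an element $\s\in\Gal(\BQ_n/\BQ)$ restricting to the identity on $\BF_S$ --- and aim to show $\s^2=\id$.

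First I would pin down the permutation $\hs\in\Sym(\Pi_\CC)$ attached to $\s$ by \eqref{eq:galois1}, claiming it is trivial. The point is that the linear characters $\phi_a$ of $K_0(\CC)$ satisfy $\phi_a(i)=S_{ia}/S_{0a}=s_{ia}/s_{0a}$, the normalization scalar between $s$ and $S$ cancelling in the ratio, so every value $\phi_a(i)$ already lies in $\BF_S$ and is fixed by $\s$. Applying $\s$ to $s_{ia}/s_{0a}$ and using \eqref{eq:galois1} then gives $\phi_a(i)=\phi_{\hs(a)}(i)$ for all $i$, i.e. $\phi_a=\phi_{\hs(a)}$; since $\phi_a=\phi_b$ would force the $a$-th and $b$-th columns of $S$ to be proportional --- impossible as $S$ is invertible --- this yields $\hs=\id_{\Pi_\CC}$.

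Then I would invoke the Galois symmetry of the twists, Theorem~\ref{t:Galois}(b), which for this $\s$ reads $\s^2(t_i)=t_{\hs(i)}=t_i$ for all $i\in\Pi_\CC$. Hence $\s^2$ fixes every diagonal entry of $t$, so it fixes $\BF_t=\BQ_n$ pointwise and $\s^2=\id$, as desired. The step that carries the real weight is this last one: it is the quadratic occurrence of $\s$ in Theorem~\ref{t:Galois}(b), rather than a linear one, that converts ``$\s$ fixes $\BF_S$, so $\hs$ is trivial'' into ``$\s^2=\id$,'' and thereby forces the $2$-group structure. The identification $\hs=\id$ in the previous step is routine once one observes that $\phi_a$ is visibly defined over $\BF_S$, and I do not anticipate a genuine obstacle there.
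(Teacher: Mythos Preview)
Your argument is correct. The paper does not give its own proof of this lemma --- it simply cites \cite[Prop.~6.5]{DLN1} --- so there is no in-paper argument to compare against. Your derivation is precisely the natural way to read the lemma off from the Galois symmetry, Theorem~\ref{t:Galois}(b): the step $\s|_{\BF_S}=\id \Rightarrow \hs=\id$ is exactly the content of Remark~\ref{r:gen_action} (recalling $\BK_\CC=\BF_S$), and then $\s^2(t_i)=t_{\hs(i)}=t_i$ forces $\s^2=\id$ on $\BF_t$.

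The one caveat worth flagging is potential circularity at the level of the cited source: both Theorem~\ref{t:Galois}(b) and Lemma~\ref{l:2group} are imported from \cite{DLN1} (as Thm.~II(iii) and Prop.~6.5 respectively), so you are deducing one DLN1 result from another. In the present paper Theorem~\ref{t:Galois} is stated first and used freely thereafter, so within this paper's logical order your proof is perfectly valid; but if you wanted a self-contained account you would need to check that in \cite{DLN1} the Galois symmetry is established without invoking the elementary $2$-group property. In fact it is --- the proof of the Galois symmetry there goes through the congruence kernel and the lifting of the projective ${\rm SL}(2,\Z)$-representation, not through Prop.~6.5 --- so there is no hidden circularity.
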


  \subsubsection{Frobenius-Schur Indicators}\label{fs indicators}

  Higher Frobenius-Schur indicators are indispensable invariants of spherical categories
    introduced in \cite{NS1}.
    For modular categories the
      Frobenius-Schur indicators can be explicitly computed from the modular data, which we
      take as a definition here:
      \begin{equation}
        \label{eq:BF}
        \nu_n(V_k) = \frac{1}{D^{2}} \sum_{i,j \in \Pi_\CC} N_{i,j}^k\, d_i
        d_j\left(\frac{\theta_i}{\theta_j}\right)^n
      \end{equation}

      for all $k \in \Pi_\CC$ and positive integers $n$.
      There is a minimal $N$ so that $\nu_N(V_k)=d_k$ for all $k\in\Pi_\CC$ called the \textbf{Frobenius-Schur exponent} $\FSexp(\CC)$.  For modular categories we have $\ord\(T\)=\FSexp(\CC)$.

\subsubsection{Modular Data}\label{modulardata}
    \begin{definition}
      Let $S,T\in\GL_r(\BC)$ and define constants $d_j:=S_{0j}$, $\theta_j:=T_{jj}$, $D^2:=\sum_j d_j^2$
      and $p_{\pm}=\sum_{k=0}^{r-1}(S_{0,k})^2\th_{k}^{\pm1}$.  The pair $(S,T)$
      is an \defnfont{admissible modular data} of rank
      $r$ if they satisfy the following conditions:
      \begin{enumerate}
        \item $d_j\in\BR$ and $S=S^t$ with $S\overline{S}^t=D^2 \Id$.
          $T_{i,j}=\delta_{i,j}\theta_i$ with $N:=\ord(T)< \infty$.
        \item $(ST)^3=p^{+}S^2$, $p_{+}p_{-}=D^2$ and $\frac{p_{+}}{p_{-}}$ is a root of unity.
        \item $N_{i,j}^k:=\frac{1}{D^2} \sum_{a=0}^{r-1} \frac{S_{ia} S_{ja}
          \overline{S_{ka}}}{S_{0a}}\in\BN$ for all $0\leq i,j,k\leq (r-1)$.
        \item   $\theta_i\theta_j
          S_{ij}=\sum_{k=0}^{r-1} N_{i^*j}^kd_k\theta_k$ where $i^*$ is the unique label such that $N_{i,i^*}^0=1$.
        \item Define $\nu_n(k): = \frac{1}{D^2} \sum_{i,j =0}^{r-1} N_{i,j}^k\,
          d_i d_j\left(\frac{\theta_i}{\theta_j}\right)^n$. Then
          $\nu_2(k)=0$ if $k\neq k^*$ and $\nu_2(k)=\pm 1$ if $k=k^*$.  Moreover,
          $\nu_n(k)\in \BZ[e^{2 \pi i/N}]$ for all $n,k$.
        \item $\mbbF_S\subset \BF_T=\BQ_N$, $\Gal(\mbbF_S/\BQ)$ is isomorphic to an
          abelian subgroup of $\mathfrak{S}_r$ and $\Gal(\mbbF_T/\mbbF_S)\cong (\BZ/2\BZ)^\ell$ for some integer $\ell$.
        \item (Cauchy Theorem, \cite[Theorem 3.9]{BNRW1}) The prime divisors of $D^{2}$ and $N$ coincide in
        $\BZ[e^{2 \pi i/N}]$.
      \end{enumerate}
    \end{definition}

%

\section{Arithmetic Properties of Modular Categories}
\label{Arithmetic Properties of Modular Categories}

  \subsection{Galois Action on Modular Data}
    \label{Galois Theory}
In this subsection we derive some consequences of the results in Subsection \ref{galsymsection}.

Let $\CC$ be a modular category with admissible modular data $(S,T)$.  The
splitting field of $K_0(\CC)$ is $\BK_\CC=\BQ\left(\frac{S_{ij}}{S_{0j}}\mid
i,j\in\Pi_\CC\right)=\BF_S$, and we define  $\Gal(\CC) =
\Gal(\BK_\CC/\BQ)=\Gal(S)$. We denote by $\BK_j =
\BQ\left(\frac{S_{ij}}{S_{0j}}\mid i\in\Pi_\CC\right)$ for $j \in \Pi_\CC$.
Obviously, $\BK_\CC$ is generated by the subfields $\BK_j$, $j \in \Pi_\CC$.

As in Subsection \ref{galsymsection} there exists a unique $\hs
\in \Sym(\Pi_\CC)$ such that
$$
\s\left(\frac{S_{ij}}{S_{0j}}\right) =\frac{S_{i\hs(j)}}{S_{0\hs(j)}}
$$
for all $i, j \in \Pi_\CC$.  In particular the map
$\s\rightarrow\hs$ defines
an isomorphism between $\Gal(\CC)$ and an (abelian) subgroup of the symmetric group $\Sym(\Pi_\CC)$.
We will often
abuse notation and identify $\Gal(\CC)$ with its image in $\Sym(\Pi_\CC)$, and the $\Gal(\CC)$-orbit of $j \in \Pi_\CC$  is simply denoted by $\langle j \rangle$. Complex conjugation corresponds to the permutation $i \mapsto i\du$ for $i \in \Pi_\CC$. In view of \eqref{eq:orthogonality}, $j \in \Pi_\CC$ is self-dual if, and only if,  $\BK_j$ is real subfield.
\begin{remark}
\label{r:gen_action}
 Since $\BK_\CC$ is Galois over $\BQ$, for any Galois extension $\BA$ over $\BK_\CC$ in $\BC$, the restriction  $\res: \Aut(\BA) \to \Gal(\CC)$ defines a surjective group homomorphism. Therefore, the group $\Aut(\BA)$  acts on $\Pi_\CC$ via the restriction maps onto $\Gal(\CC)$, and so the $\Aut(\BA)$-orbits are the same as the $\Gal(\CC)$-orbits.
 Again, we denote by $\hs$ the associated permutation of  $\s \in \Aut(\BA)$.  Then we have
 $\hs = \id_{\Pi_\CC}$ if, and only if, $\s \in \Gal(\BA/\BK_\CC)$.
\end{remark}

\begin{lem}
\label{l:orbit}
For $j \in \Pi_\CC$ and $\s \in \AQ$, $\BK_{\hs(j)}=\BK_j$.
Moreover, $[\BK_j:\BQ] = |\langle j \rangle| \le |\Pi_\CC|$. If $j$ is self-dual, then every class in the orbit $\langle j \rangle$ is self-dual. In particular, every class in the orbit $\langle 0 \rangle$ is self-dual.
\end{lem}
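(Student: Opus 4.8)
The plan is to run everything off the Galois relation $\s\!\left(\tfrac{S_{ij}}{S_{0j}}\right)=\tfrac{S_{i\hs(j)}}{S_{0\hs(j)}}$ recorded just above the statement, together with the fact (a consequence of \thmref{t:cong1}) that $\BK_\CC=\BF_S$ is an \emph{abelian} Galois extension of $\BQ$, so that \emph{every} intermediate field of $\BK_\CC/\BQ$ is itself Galois over $\BQ$. For the equality $\BK_{\hs(j)}=\BK_j$: applying $\s$ to the generators $\tfrac{S_{ij}}{S_{0j}}$, $i\in\Pi_\CC$, of $\BK_j$ and invoking the displayed relation gives $\s(\BK_j)=\BK_{\hs(j)}$; on the other hand $\BK_j/\BQ$ is normal, being intermediate in an abelian extension, so $\s(\BK_j)=\BK_j$ for every $\s\in\AQ$. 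Comparing the two identities yields the claim. (By Remark~\ref{r:gen_action} the permutation $\hs$ depends only on $\s|_{\BK_\CC}$, so it does no harm to think of $\s$ as ranging over the finite group $\Gal(\CC)$.)

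For $[\BK_j:\BQ]=|\langle j\rangle|$, I would identify the stabilizer of $j$ under the faithful action of $\Gal(\CC)$ on $\Pi_\CC$. From the displayed relation, $\hs(j)=j$ is equivalent to $\s$ fixing every generator $\tfrac{S_{ij}}{S_{0j}}$ of $\BK_j$: indeed the characters $\phi_a\colon i\mapsto \tfrac{S_{ia}}{S_{0a}}$ are pairwise distinct (they are precisely the linear characters of $K_0(\CC)$, since $S$ is non-singular), so $\phi_{\hs(j)}=\phi_j$ forces $\hs(j)=j$. Hence $\mathrm{Stab}_{\Gal(\CC)}(j)=\Gal(\BK_\CC/\BK_j)$, and the orbit–stabilizer formula together with the fundamental theorem of Galois theory give
\[
|\langle j\rangle| \;=\; [\Gal(\CC):\Gal(\BK_\CC/\BK_j)] \;=\; [\BK_j:\BQ].
\]
The inequality $|\langle j\rangle|\le|\Pi_\CC|$ is then immediate, since $\langle j\rangle\subseteq\Pi_\CC$.

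For the assertions about self-duality I would use the criterion noted after \eqref{eq:orthogonality}: $j$ is self-dual if and only if $\BK_j$ is a real field. If $j=j^*$ then $\BK_j\subseteq\BR$, and since $\BK_{\hs(j)}=\BK_j$ for all $\s\in\AQ$ by the first part, each $\BK_{\hs(j)}$ is real too; as $\hs(j)$ exhausts $\langle j\rangle$ when $\s$ ranges over $\AQ$, every class in $\langle j\rangle$ is self-dual. Applying this to $j=0$, which is self-dual because $V_0=\1$ is, shows the orbit $\langle 0\rangle$ consists entirely of self-dual classes. I do not expect a genuine obstacle here; the only points that need a little care are that the $\AQ$-action on $\Pi_\CC$ factors through the finite group $\Gal(\CC)$ (so that orbit–stabilizer may be applied), and that $a\mapsto\phi_a$ is injective, so that $\hs(j)=j$ is truly equivalent to $\s$ fixing $\BK_j$ pointwise.
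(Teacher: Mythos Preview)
Your proof is correct and follows essentially the same approach as the paper's: establish $\s(\BK_j)=\BK_{\hs(j)}$ from the Galois relation, use normality of $\BK_j$ inside the abelian extension $\BK_\CC$ to get $\s(\BK_j)=\BK_j$, identify the stabilizer of $j$ with the subgroup fixing $\BK_j$ pointwise, and invoke orbit--stabilizer together with the fundamental theorem of Galois theory. The paper carries out the degree computation via an auxiliary finite Galois extension $\BA\supseteq\BK_j$, but your choice $\BA=\BK_\CC$ is exactly the natural instance of that argument.
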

\begin{proof}
  As we have seen,
$\phi_j: K_0(\CC) \to \BK_\CC$, $\phi_j(i) = \frac{S_{ij}}{S_{0j}}$, defines a
linear character of $K_0(\CC)$. Therefore,
$$
\frac{S_{aj}}{S_{0j}}  \frac{S_{bj}}{S_{0j}} = \sum_{c \in \Pi_\CC} N_{ab}^c
\frac{S_{cj}}{S_{0j}}\,.
$$
Thus, the $\BQ$-linear span of
$
\{ S_{ij}/ S_{0j}\,|\, i \in \Pi_\CC \}
$
is field, and hence equals to $\BK_j$.
Since $\BK_j$ is a subfield of $\BK_\CC$, $\BK_j/\BQ$ is a normal extension.
Therefore,
$$
\BK_j= \s(\BK_j) = \BQ\left(\s\left(\frac{S_{ij}}{S_{0j}}\right)\,\bigg|\, i\in
\Pi_\CC\right) = \BQ\left(\frac{S_{i\hs(j)}}{S_{0\hs(j)}}\,\bigg|\, i\in
\Pi_\CC\right) = \BK_{\hs(j)}\,.
$$
Let $\BA/\BQ$ be any finite Galois extension containing $\BK_j$, and
 $H$  the kernel of the restriction map $\res: \Gal(\BA/\BQ) \to
\Gal(\BK_j/\BQ)$. Then $\s \in H$ if, and only if,
$$
S_{ij}/S_{0j} = \s(S_{ij}/S_{0j}) = S_{i\hs(j)}/S_{0\hs(j)}
$$
for all $i \in \Pi_\CC$. Thus, $H$ is equal to the stabilizer of $j$, and hence
$$
[\BK_j: \BQ] = |\Gal(\BK_j/\BQ)| = |\Aut(\BA)/H| = |\langle j \rangle|.
$$
The last assertion follows immediately from the fact that
$j$ is self-dual if, and only if, $\BK_j$ is a real abelian extension over $\BQ$.
\end{proof}

\begin{lem}
  \label{integrallemma} Let $\CC$ be a modular category with modular data $(S,
T)$.
  \begin{enumerate}
    \item  $\CC$ is pseudo-unitary if and only if $d_i=\pm\FPdim(V_i)$ for
all
    $i \in \Pi_\CC$.
    \item $\CC$ is integral if, and only if, $d_i\in\Z$ for all $i \in
\Pi_\CC$ if, and only if, $|\langle 0\rangle|=1$.
    \item If $|\langle j\rangle |=1$ for all $j\not\in\langle 0\rangle$,
then there exists an $\s\in\Aut\(\bar{\mbbQ}\)$
    such that $(\sigma(S),\sigma(T))$ is realizable modular data for some
    pseudo-unitary modular category.
  \end{enumerate}
\end{lem}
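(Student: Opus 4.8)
The plan is to reduce all three parts to two facts recorded in Section~\ref{Modular Categories}: for each $i$ the categorical dimension $d_i$ is one of the eigenvalues $\{\phi_a(i)\}$ of the nonnegative integral fusion matrix $N_i$, all of which are bounded in modulus by its Perron--Frobenius eigenvalue $\FPdim(V_i)$, so $|d_i|\le\FPdim(V_i)$; and $\FPdim(\CC)=D^2/d_a^2$, where $a\in\Pi_\CC$ is the unique index with $\FPdim=\phi_a$. For part (i): since $\CC$ is spherical the $d_i$ are real, so $D^2=\sum_i d_i^2$ and $\FPdim(\CC)=\sum_i\FPdim(V_i)^2$ are sums of real squares, and the termwise bounds $d_i^2\le\FPdim(V_i)^2$ sum to $D^2\le\FPdim(\CC)$, with equality precisely when $d_i=\pm\FPdim(V_i)$ for every $i$. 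As $\CC$ is pseudo-unitary by definition exactly when $D^2=\FPdim(\CC)$, this proves (i).

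For part (ii): by Lemma~\ref{l:orbit}, $|\langle 0\rangle|=[\BK_0:\BQ]$, and $\BK_0=\BQ\!\left(S_{i0}/S_{00}\mid i\in\Pi_\CC\right)=\BQ\!\left(d_i\mid i\in\Pi_\CC\right)$ because $S_{00}=d_0=1$; as each $d_i$ is an algebraic integer, $|\langle 0\rangle|=1$ is equivalent to $\BK_0=\BQ$, hence to $d_i\in\BZ$ for all $i$. To tie this to integrality I would argue: if $\CC$ is integral then it is weakly integral, hence pseudo-unitary \cite{ENO1}, and (i) gives $d_i=\pm\FPdim(V_i)\in\BZ$; conversely, if all $d_i\in\BZ$ then $d_a\in\BZ\setminus\{0\}$ (note $d_a^2=D^2/\FPdim(\CC)>0$), so $1\le d_a^2=D^2/\FPdim(\CC)\le1$ by $D^2\le\FPdim(\CC)$, forcing $D^2=\FPdim(\CC)$; then $\CC$ is pseudo-unitary and (i) gives $\FPdim(V_i)=|d_i|\in\BZ$, i.e.\ $\CC$ is integral.

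For part (iii): I would first record that $\CC$ is integral if and only if $|\langle a\rangle|=1$, since $\FPdim(V_i)=\phi_a(i)$ is an algebraic integer in $\BK_\CC$ and hence lies in $\BZ$ for every $i$ exactly when $\phi_a$ is fixed by $\Gal(\CC)$, i.e.\ when $\hs(a)=a$ for all $\s$. If $a\notin\langle 0\rangle$ the hypothesis forces $|\langle a\rangle|=1$, so $\CC$ is integral, hence pseudo-unitary, and $\s=\id$ works. If $a\in\langle 0\rangle$, choose $\s\in\Aut(\overline{\BQ})$ whose restriction to $\BK_\CC$ lies in $\Gal(\CC)$ and sends $0$ to $a$ (possible since $\res\colon\Aut(\overline{\BQ})\to\Gal(\CC)$ is surjective, cf.\ Remark~\ref{r:gen_action}, and $a\in\langle 0\rangle$), and let $\CC^{\s}$ be the Galois conjugate modular category, with modular data $(\s(S),\s(T))$; in particular that data is realizable. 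Since each $N_{ij}^k\in\BN$ is fixed by $\s$, the category $\CC^{\s}$ has the same Grothendieck ring, hence the same Frobenius--Perron dimensions, as $\CC$; and its categorical dimensions are $d_i^{\CC^{\s}}=\s(S_{0i})=\s\!\left(S_{i0}/S_{00}\right)=S_{i\hs(0)}/S_{0\hs(0)}=S_{ia}/S_{0a}=\FPdim(V_i)=\FPdim^{\CC^{\s}}(V_i)$ for all $i$, using the defining property of $\hs$ for $S$ (Section~\ref{Galois Theory}). By part (i), $\CC^{\s}$ is then pseudo-unitary and realizes $(\s(S),\s(T))$.

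The one substantive external input, and the step I expect to require the most care, is the use in part (iii) of the Galois conjugate $\CC^{\s}$: one needs that applying a field automorphism $\s$ to the (finitely many, algebraic) structure constants of a modular category --- associativity, braiding and ribbon data --- again produces a modular category, whose $S$- and $T$-matrices are the entrywise $\s$-images of those of $\CC$. This rests on the fact that every fusion category is definable over a number field (a consequence of Ocneanu rigidity \cite{ENO1}), and I would isolate it as a preliminary lemma before invoking it here. Everything else --- parts (i) and (ii), and the case analysis in (iii) --- is elementary once the Perron--Frobenius inequality and Lemma~\ref{l:orbit} are in place.
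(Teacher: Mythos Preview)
Your proof is correct and follows essentially the same line as the paper's: part (i) is identical, part (iii) is the same case split on whether the Frobenius--Perron column index $a$ lies in $\langle 0\rangle$, with the Galois-conjugate category supplying the pseudo-unitary realization. The only notable variation is in the converse direction of (ii): the paper argues that $d_i\in\BZ$ forces $\FPdim(\CC)\in\BQ$ (hence in $\BZ$, so weakly integral, so pseudo-unitary), while you use the cleaner observation that $d_a\in\BZ\setminus\{0\}$ combined with $d_a^2=D^2/\FPdim(\CC)\le 1$ forces $d_a^2=1$ and hence pseudo-unitarity directly --- a slight streamlining, but not a different strategy.
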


\begin{proof}
The pseudo-unitarity condition is: $\sum_{j \in \Pi_\CC}
d_j^2=\sum_{j \in \Pi_\CC} \FPdim(V_j)^2,$ and $|d_i|\leq\FPdim(V_i)$ so pseudo-unitarity fails if and
only if $|d_i|<\FPdim(V_i)$ for some $i$.  This proves (i).

For (ii), first observe that $|\langle 0\rangle|=1$ if and only if $d_i\in\Z$
for all $i$ proving the second equivalence in (ii).  By \cite[Prop.
8.24]{ENO1} weakly integral fusion categories are pseudo-unitary. Applying
(i) we see that $d_i\in\Z$ if $\FPdim(V_i)\in\Z$. On the other hand, if
$d_i\in\Z$ for all $i$ we have $D^2 = \sum_i d_i^2\in\Z$ and
$\FPdim(V_i)=S_{i,j}/d_j\in\BR$ for some $j$. Since
$\sum_i(S_{i,j})^2=D^2 \in\Z$,
$d_j^2\sum_i(\FPdim(V_i))^2\in\Z$, and in particular
$\FPdim(\CC) \in\BQ$.  But $\FPdim(\CC)$ is an algebraic
integer, so we see that in this case $\CC$ is weakly integral, and hence
pseudo-unitary.

We have $\FPdim(V_i)=S_{i,j}/d_j=\phi_j(i)$ for some $j$.
If $|\langle j\rangle|=1$ then $S_{i,j}/d_j\in\Z$ for $i \in \Pi_\CC$, and so
$\CC$ is
pseudo-unitary. If $|\langle j\rangle|>1$, then $j \in \langle
0\rangle$ by assumption.  Let $\s \in \Aut\(\bar{\mbbQ}\)$ such that
$\hs(0)=j$ (which exists by definition). We consider a Galois conjugate modular
category $\CC'$ with the (realizable) modular data $(\sigma(S),\sigma(T))$. It
is immediate to see that $\CC'$ is pseudo-unitary since $\sigma(\phi_j)$ is the
first row/column of $\s(S)$. This completes the proof of (iii).
\end{proof}

Note that, by \lemmaref{l:orbit}, a modular category which satisfies the condition
(c) of the preceding lemma must be self-dual.

Now we consider a normalized modular pair $(s,t)$ of $\CC$.
 Since $\frac{s_{ij}}{s_{0j}}=\frac{S_{ij}}{S_{0j}}$
we have
$$
\BK_j = \BQ\left(\frac{s_{ij}}{s_{0j}}\mid i \in \Pi_\CC\right)\text{ and }\BK_\CC
=
\BQ\left(\frac{s_{ij}}{s_{0j}}\mid i, j \in \Pi_\CC\right)\,.$$

 For any $\s\in\Aut(\ol\BQ)$, \eqref{eq:galois2} implies that
 \begin{equation}\label{eq:2}
S_{ij} = \e_\s(i)\e_{\s\inv}(j) S_{\hs(i)\hs\inv(j)}\,.
\end{equation}
 for some sign function $\e_\s: \Pi_\CC \to \{\pm 1\}$ depending on $s$.

\begin{rmk}\label{Gidrem}
\mbox{}
\begin{enumerate}
\item
Observe that while $\e_\s(i)$ depends on the choice of the
normalized pair $(s,t)$, the quantity $\e_\s(i)\e_{\s\inv}(j)$ does not.

\item Observe that
$G: \Aut(\ol \BQ) \to GL(\Pi_\CC, \BZ)$, $\s \mapsto \gs:=\s(s)s^{-1}$, defines a group
homomorphism.
If $\gs$ is a diagonal matrix or, equivalently, $\hs = \id_{\Pi_\CC}$, then
$\s(s_{ij}) = \e_\s(j) s_{ij} = \e_\s(i) s_{ij}$ for all $i, j \in \Pi_\CC$. In
particular, $\e_\s(j) s_{0j} = \e_\s(0)
s_{0j}$. Since $s_{0j} \ne 0$ for all $j \in \Pi_\CC$, $\e_\s(j)=\e_\s(0)=\pm 1$
for all $j \in \Pi_\CC$. Therefore, $\gs=\pm I$ if $\hs = \id_{\Pi_\CC}$ (cf. \cite[Lem. 5]{Ban2}). Therefore,  $\im G$ is either isomorphic to $\Gal(\CC)$ or an abelian extension of $\Gal(\CC)$ by $\BZ_2$.
\end{enumerate}
\end{rmk}

The following results  will be useful in
\sectionref{Applications: Rank 5 Classification}.
\begin{lem} \label{l:4.5}
 If $\hs$ is an order 2 permutation in  $\s \in \AQ$, such that $\hs$ has a
fixed point (for example if the rank of $\CC$ is odd) then
$\e_\s(j)=\e_\s(\hs(j))$ and
  \begin{equation*}
    S_{ij} = \e_\s(i)\e_\s(j)S_{\hs(i)\hs(j)}
  \end{equation*}

  for all $i,j \in \Pi_\CC$. In particular,
  \begin{equation*}
    S_{ii} = S_{\hs(i)\hs(i)}
  \end{equation*}

  for all $i\in \Pi_\CC$.
\end{lem}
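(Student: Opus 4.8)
The plan is to work with a fixed normalized modular pair $(s,t)$ of $\CC$ and its associated sign function $\e_\s$, extracting everything from the Galois symmetry \eqref{eq:galois2}. By \eqref{eq:repC} the matrix $s$ is a nonzero scalar multiple of $S$, so the asserted identity $S_{ij}=\e_\s(i)\e_\s(j)S_{\hs(i)\hs(j)}$ is equivalent to the same identity with $s$ in place of $S$, and it is the latter I will establish. Recall also that $\hs$ depends only on $\s$ and not on the chosen pair (Remark \ref{r:gen_action}); only $\e_\s$ is pair-dependent.

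The first step is to iterate \eqref{eq:galois2}. Since each $\e_\s(i)\in\BQ$ is fixed by $\s$ and $\hs^2=\id_{\Pi_\CC}$, applying $\s$ twice to $s_{ij}$ through the two halves of \eqref{eq:galois2} yields
\begin{equation*}
\s^2(s_{ij})=\e_\s(i)\e_\s(\hs(i))\,s_{ij}=\e_\s(j)\e_\s(\hs(j))\,s_{ij}\qquad\text{for all }i,j\in\Pi_\CC .
\end{equation*}
Put $\delta(i):=\e_\s(i)\e_\s(\hs(i))\in\{\pm1\}$; comparing the two expressions above shows $\delta(i)=\delta(j)$ whenever $s_{ij}\neq 0$. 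As $s_{0j}\neq 0$ for all $j$ (Remark \ref{Gidrem}), this gives $\delta(j)=\delta(0)$ for every $j$. Now pick a fixed point $i_0$ of $\hs$, which exists by hypothesis — in particular whenever the rank of $\CC$ is odd, since an involution of a set of odd cardinality must fix a point. Then $\delta(i_0)=\e_\s(i_0)^2=1$, hence $\delta\equiv 1$; that is, $\e_\s(\hs(j))=\e_\s(j)$ for all $j$, which is the first assertion.

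Granting this, the rest is immediate. Equating the two halves of \eqref{eq:galois2} gives $\e_\s(i)\,s_{\hs(i),j}=\e_\s(j)\,s_{i,\hs(j)}$ for all $i,j$; replacing $i$ by $\hs(i)$ and using $\hs^2=\id$ turns this into $\e_\s(\hs(i))\,s_{ij}=\e_\s(j)\,s_{\hs(i)\hs(j)}$, and the first assertion lets us write $\e_\s(i)$ for $\e_\s(\hs(i))$. Multiplying by $\e_\s(i)$ gives $s_{ij}=\e_\s(i)\e_\s(j)\,s_{\hs(i)\hs(j)}$, hence $S_{ij}=\e_\s(i)\e_\s(j)\,S_{\hs(i)\hs(j)}$; setting $j=i$ and using $\e_\s(i)^2=1$ produces $S_{ii}=S_{\hs(i)\hs(i)}$.

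The only genuinely delicate point is pinning $\delta$ to $+1$ rather than $-1$: without a fixed point of $\hs$ the argument only shows that $\delta$ is constant and gives no control over its sign, so the fixed-point hypothesis is genuinely needed and one cannot in general rule out $\delta\equiv-1$. Everything else is routine bookkeeping with \eqref{eq:galois2}, the only care being to keep $\e_\s$ attached to a single fixed normalized pair and to note that the scalar relating $s$ and $S$ drops out of the final homogeneous identity.
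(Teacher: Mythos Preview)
Your proof is correct and follows essentially the same approach as the paper's: both iterate the Galois action \eqref{eq:galois2} to see that $\s^2$ acts on $s$ by the constant sign $\e_\s(j)\e_\s(\hs(j))$, use the fixed point to force that sign to be $+1$, and then read off the identity for $S_{ij}$. The only cosmetic difference is that the paper packages the constancy step via Remark~\ref{Gidrem} (observing $G_{\s^2}=\pm I$ and using the fixed point to get $G_{\s^2}=I$) and then passes through the relation $\e_\s=\e_{\s^{-1}}$ and equation~\eqref{eq:2}, whereas you do the equivalent computations directly from \eqref{eq:galois2} without introducing $\e_{\s^{-1}}$.
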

\begin{proof}
  Let $\ell$ be a fixed point of $\hs$.
  Thus $\s^2(s_{0\ell})=s_{0\ell}$ and so $\e_{\s^2}(\ell)=1$. Since $G_{\s^2}$ is
  diagonal and the $\ell$-th diagonal entry is 1, $G_{\s^2}=\id$ by Remark
\ref{Gidrem}. Thus,
  \begin{equation*}
    s_{0j}=\s^2(s_{0j}) = \e_\s(j)\e_\s(\hs(j)) s_{0j}
  \end{equation*}

  for all $j$. Therefore, $\e_\s(j)=\e_\s(\hs(j))$ for all $j$. On the other hand,
  we always have $\e_\s(j) \e_{\s\inv}(\hs(j))=1$, we find $\e_\s = \e_{\s\inv}$.
  Thus, by \eqref{eq:2}, we have
  \begin{equation*}
    S_{ij}= \e_\s(i)\e_{\s\inv}(j) S_{\hs(i)\hs\inv(j)} = \e_\s(i)\e_{\s}(j)
    S_{\hs(i)\hs(j)}\,.\qedhere
  \end{equation*}
\end{proof}

\begin{lem}
  \label{201304261618}
  If $\mcC$ is a rank $r\geq 5$ modular category with modular data $(S,T)$ such
that
  $\Gal\(\mcC\)=\langle\(0\;1\)\rangle$
  then:
  \begin{enumerate}
    \item $d_1 >0$,
    \item $\frac{1}{d_{1}}+d_{1}$, $D^{2}/d_{1}$, and $d_{i}^{2}/d_{1}$
    are rational integers for $i\geq 2$.
    \item Defining $\e_j=\frac{S_{1j}}{d_j}$ for each $j\geq 2$ we have
\begin{enumerate}
\item $\e_j\in\{\pm 1\}$.
\item There exist $i,j$ such that $\e_i=-\e_j$, and in this case
$S_{ij}=0$.
\end{enumerate}
  \end{enumerate}
\end{lem}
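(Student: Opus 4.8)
\textit{Plan.} The plan is to extract the structural consequences of $\Gal(\mcC)=\langle(0\;1)\rangle$ and then run the Galois--symmetry machinery of \lemmaref{l:4.5}. First I fix a lift $\s\in\AQ$ of the nontrivial element of $\Gal(\mcC)$, so that $\hs=(0\;1)$. Since the permutation $i\mapsto i\du$ lies in $\Gal(\mcC)$ and must fix $0$, it is trivial; hence $\mcC$ is self-dual and $S$ is a real matrix. As $r\ge 3$, the transposition $\hs$ has a fixed point, so \lemmaref{l:4.5} applies and supplies $\e_\s(j)=\e_\s(\hs(j))$ together with $S_{ij}=\e_\s(i)\e_\s(j)\,S_{\hs(i)\hs(j)}$ for all $i,j$. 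Taking $i=j=1$ gives $S_{11}=S_{00}=1$, and taking $i=1$ with $j\ge 2$ (so $\hs(j)=j$) gives $S_{1j}=\e_\s(1)\e_\s(j)\,d_j$; hence $\e_j=S_{1j}/d_j=\e_\s(1)\e_\s(j)\in\{\pm1\}$, which is exactly part (iii)(a). I also record, by applying \eqref{eq:galois1} to the ratio $S_{j0}/S_{00}$, that $\s$ interchanges the characters $\phi_0$ and $\phi_1$, that $\s(d_j)=S_{1j}/d_1$ for every $j$, and in particular $\s(d_1)=S_{11}/d_1=1/d_1$.

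For part (i): each $d_i=\phi_0(i)$ and each $\phi_1(1)=1/d_1$ is an eigenvalue of an integral fusion matrix, hence an algebraic integer. Write $\FPdim=\phi_a$. I claim $a\in\{0,1\}$: otherwise $\hs$ fixes $a$, so $\phi_a$ is invariant under $\Gal(\mcC)$, hence $\BZ$-valued, making $\mcC$ integral and contradicting $|\langle 0\rangle|=2$ by \lemmaref{integrallemma}(ii). Therefore $\FPdim(V_1)=\phi_a(1)\in\{d_1,\,1/d_1\}$; since $d_1\cdot(1/d_1)=1>0$ these two numbers have a common sign, which must be positive because $\FPdim(V_1)>0$. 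Thus $d_1>0$.

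For part (ii): since $\s$ interchanges $\phi_0$ and $\phi_1$, it fixes $\phi_0(1)+\phi_1(1)=d_1+1/d_1$, which is therefore a rational algebraic integer, i.e.\ lies in $\BZ$. By \lemmaref{l:orbit}, $\BK_{\mcC}=\BK_0$ is quadratic over $\BQ$ (all the other $\BK_j$ equal $\BQ$), with nontrivial automorphism $\s|_{\BK_{\mcC}}$; so for $j\ge 2$ the norm $N_{\BK_{\mcC}/\BQ}(d_j)=d_j\,\s(d_j)=d_j\cdot(S_{1j}/d_1)=\e_j\,d_j^2/d_1$ is a nonzero rational integer, and since $\e_j=\pm1$ and $d_j^2/d_1>0$ this forces $d_j^2/d_1\in\BZ_{\ge 1}$. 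Finally $D^2/d_1=\sum_j d_j^2/d_1=\tfrac1{d_1}+d_1+\sum_{j\ge 2}d_j^2/d_1\in\BZ$.

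For part (iii)(b): if $\e_i=-\e_j$ with $i,j\ge 2$, then $\e_\s(i)\e_\s(j)=\e_i\e_j=-1$, so the \lemmaref{l:4.5} identity reads $S_{ij}=\e_\s(i)\e_\s(j)\,S_{\hs(i)\hs(j)}=-S_{ij}$, whence $S_{ij}=0$. To produce such a pair I argue by contradiction: if $\e_j\equiv\e$ for all $j\ge 2$, then orthogonality of rows $0$ and $1$ of the real matrix $S$ (see \eqref{eq:orthogonality}) gives $0=\sum_j d_j S_{1j}=2d_1+\e\sum_{j\ge 2}d_j^2$; since $d_1>0$ and $\sum_{j\ge 2}d_j^2>0$ this forces $\e=-1$ and $\sum_{j\ge 2}d_j^2=2d_1$, which is impossible because part (ii) yields $d_j^2\ge d_1$ for each of the $r-2\ge 3$ indices $j\ge 2$, so $\sum_{j\ge 2}d_j^2\ge 3d_1>2d_1$. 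This last contradiction is the crux of the proof and the only place where $r\ge 5$ — rather than merely $r\ge 3$ — is used: one needs at least three labels outside $\{0,1\}$ for the single orthogonality relation to clash with the lower bound $d_j^2\ge d_1$.
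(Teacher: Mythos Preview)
Your proof is correct and follows essentially the same approach as the paper's: both invoke \lemmaref{l:4.5} to get $S_{11}=1$ and $S_{1j}=\e_\s(1)\e_\s(j)d_j$, locate $\FPdim$ in columns $0$ or $1$ via \lemmaref{integrallemma}(ii) to obtain $d_1>0$, use trace/norm in the quadratic field $\BK_\CC$ for the integrality statements, and then derive (iii)(b) from orthogonality of rows $0$ and $1$ together with the bound $d_j^2/d_1\ge 1$ for $j\ge 2$. Your write-up is in fact slightly more explicit than the paper's on two points---you note self-duality up front (needed to use real orthogonality of rows), and you track the sign $\e_j$ in the norm computation rather than asserting directly that the norm is $d_j^2/d_1$---but these are refinements of the same argument rather than a different route.
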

\begin{proof}

By Lemma \ref{l:4.5} we see that $S_{11}=1$. Since $\sigma=\(0\;1\)$ interchanges $d_1$ and $1/d_1$, the trace of $d_1$ is
$d_1+1/d_1$ and the norm of $d_i$ for $i\geq 2$ is $d_i^2/d_1$ so these must be integers.  This implies that
$D^2/d_1=d_1+1/d_1+\sum_{i=2}^{r-1}d_i^2/d_1\in\BZ$.

If the Frobenius-Perron dimension were a multiple of column $j$ for some $j>1$ then $\FPdim(V_i) = S_{ij}/d_j$ is an integer for all $i$ as
 as $|\langle j\rangle|=1$. Then $\CC$ would be integral, and so $d_i \in \BZ$ by Lemma \ref{integrallemma}(ii) for all $i$. However, this contradicts the fact that $|\langle 0 \rangle|=2$.  So the FP-dimension must be a scalar multiple of one of the first two columns. In any of these two cases, we find $d_1>0$.

By \eqref{eq:2}, we have $S_{1j}=\pm S_{0j}$ for $j\geq 2$, so
$\e_j:=\frac{S_{1j}}{d_j}=\pm 1$ proving (iii)(a).  Now orthogonality of the
first two rows of $S$ gives us: $2d_1+\sum_{j\geq 2} \e_jd_j^2=0$ or
$2=-\sum_{j\geq 2}\e_jd_j^2/d_1$, a sum of integers.  Since $r\geq 5$ and $d_j^2/d_1>0$ we see
that it is impossible for all of the $\e_j$ to have identical signs. On the
other hand we have $\e_jd_j=S_{1j}=\e_\s(1)\e_\s(j)S_{0j}$ for each $j\geq 2$,
so $\e_j=\e_\s(1)\e_\s(j)$.  If $\e_i=-\e_j$ then $\e_\s(i)=-\e_\s(j)$ so that
$S_{ij}=\e_\s(i)\e_{\s}(j)S_{\hs(i)\hs(j)}=-S_{ij}$ by Lemma \ref{l:4.5}. Hence $S_{ij}=0$.
\end{proof}

\begin{lem} \label{l:c61}
Suppose $\CC$ is a modular category of \emph{odd} rank $r\geq 5$.
Then  $(0\,1)(2\,\cdots\, r-1) \not\in
\Gal(\CC)$.
\end{lem}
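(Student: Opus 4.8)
The proof is by contradiction. Suppose $\widehat{\sigma}:=(0\,1)(2\,3\cdots r{-}1)$ lies in $\Gal(\CC)\subseteq\mathfrak{S}_r$, and set $c:=(2\,3\cdots r{-}1)$. Since $r$ is odd, $r-2$ is odd, $\gcd(2,r-2)=1$, and $\widehat{\sigma}^{\,r-2}=(0\,1)$, so $(0\,1)\in\Gal(\CC)$; fix $\tau\in\Aut(\ol\BQ)$ whose associated permutation is $(0\,1)$.

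First I would pin down the orbit structure. The $\langle\widehat{\sigma}\rangle$-orbits are $\{0,1\}$ and $\{2,\dots,r{-}1\}$, so each lies in a single $\Gal(\CC)$-orbit, and these two $\Gal(\CC)$-orbits cannot coincide: otherwise $\Gal(\CC)$ would act transitively on $\Pi_\CC$, hence regularly (being abelian), forcing $|\Gal(\CC)|=r$, which is impossible as $\widehat{\sigma}$ has order $2(r-2)>r$. Thus $\langle 0\rangle=\{0,1\}$ and $\langle j\rangle=\{2,\dots,r{-}1\}$ for $j\ge 2$; by Lemma~\ref{l:orbit}, $[\BK_0:\BQ]=|\langle 0\rangle|=2$, and since $\tau$ moves the index $0$ it restricts to the nontrivial automorphism of the quadratic field $\BK_0$. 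I would also note that $\CC$ is self-dual: complex conjugation gives a permutation $*\in\Gal(\CC)$ commuting with $\widehat{\sigma}$ that fixes $0$, and fixes $1$ as well since $1\in\langle 0\rangle$ and $0$ is self-dual (Lemma~\ref{l:orbit}); hence $*$ restricted to $\{2,\dots,r{-}1\}$ commutes with the full cycle $c$, so is a power of $c$, so is trivial (being an involution with $r-2$ odd). So $S$ is real.

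Now apply Lemma~\ref{l:4.5} to $\tau$ (applicable since its permutation $(0\,1)$ has a fixed point): there is a sign function $\e:=\e_\tau$ on $\Pi_\CC$ with $S_{11}=S_{00}=1$, with $S_{1j}=\e(1)\e(j)\,d_j$ for $j\ge 2$, and with $S_{ij}=\e(i)\e(j)\,S_{ij}$ for $i,j\ge 2$ (so $S_{ij}=0$ whenever $i,j\ge 2$ and $\e(i)\ne\e(j)$). Since $\tau(d_j)=\phi_1(j)=S_{1j}/d_1$, for each $j\ge 2$ the norm $N_{\BK_0/\BQ}(d_j)=d_j\,\tau(d_j)=\e(1)\e(j)\,d_j^{2}/d_1$ is a rational integer ($d_j$ being an algebraic integer); hence $m_j:=d_j^{2}/d_1$ is a nonzero integer, and it has the sign of $d_1$ because $d_j^{2}>0$. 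The crucial remaining point is that $\e$ is constant on $\{2,\dots,r{-}1\}$. If not, then $S_{ij}=0$ for some $i,j\ge 2$; passing to a normalized modular pair $(s,t)$, whose zero pattern agrees with that of $S$, and using the Galois relation \eqref{eq:galois2} for $\widehat{\sigma}$, the identity $s_{ij}=0$ forces $s_{\widehat{\sigma}(i)j}=0$ and $s_{i\widehat{\sigma}(j)}=0$; on $\{2,\dots,r{-}1\}$, where $\widehat{\sigma}$ acts as $c$, this makes the zero set of $S$ on $\{2,\dots,r{-}1\}^{2}$ invariant under $\langle c\rangle\times\langle c\rangle$, which is transitive there. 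Hence all $S_{ij}$ with $i,j\ge 2$ vanish, so the $r-2\ge 3$ columns $2,\dots,r{-}1$ of $S$ all lie in the span of $e_0,e_1$, contradicting the invertibility of $S$. So $\e$ is constant on $\{2,\dots,r{-}1\}$, i.e.\ $S_{1j}=\delta\,d_j$ for a fixed $\delta\in\{\pm1\}$ and all $j\ge 2$.

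To finish, orthogonality of rows $0$ and $1$ of the real matrix $S$ gives $0=\sum_{j}S_{0j}S_{1j}=2d_1+\delta\sum_{j\ge 2}d_j^{2}$, so dividing by $d_1\ne 0$ yields $\sum_{j\ge 2}m_j=-2\delta$, whose absolute value is $2$. But $m_2,\dots,m_{r-1}$ are $r-2\ge 3$ nonzero integers all of the same sign, so $\bigl|\sum_{j\ge 2}m_j\bigr|\ge r-2\ge 3>2$ --- a contradiction. I expect the delicate step to be the propagation-of-zeros argument showing $\e_\tau$ is constant on $\{2,\dots,r{-}1\}$; the remainder is orbit bookkeeping plus the norm and orthogonality computations.
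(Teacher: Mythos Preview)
Your proof is correct and reaches the same numerical endgame as the paper---orthogonality of rows $0$ and $1$ forces $\bigl|\sum_{j\ge 2}d_j^{2}/d_1\bigr|=2$, while that sum has at least $r-2\ge 3$ nonzero integer terms of a common sign---but the route is genuinely different. The paper works directly with $\hs=(0\,1)(2\cdots r{-}1)$ and the relation $S_{ij}=\pm S_{\hs(i)\hs^{-1}(j)}$: iterating this immediately gives $S_{ij}=\pm d_2$ for $i\in\{0,1\}$ and $j\ge 2$ (and $S_{11}=\pm 1$), after which the constancy of the sign $S_{1j}/S_{0j}=\pm 1$ across $j\ge 2$ follows in one line by observing this ratio is a rational integer, hence fixed by $\sigma$, hence equal for $j$ and $\hs(j)$. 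You instead pass to the power $\hat\tau=(0\,1)=\hs^{\,r-2}$, invoke Lemma~\ref{l:4.5} for $\tau$, and obtain the constancy of the sign on $\{2,\dots,r-1\}$ via a propagation-of-zeros argument under the cycle action of $\hs$ on the block $\{2,\dots,r-1\}^{2}$. Your path is somewhat longer (the orbit decomposition and self-duality digressions are not strictly needed---only reality of rows $0$ and $1$ is used in the orthogonality step, and that is automatic once $S_{11}=1$ and $S_{1j}=\pm d_j$), but it has the mild advantage of never needing the finer fact that $|d_j|=|d_2|$ for all $j\ge 2$; the paper's argument is shorter because the ``rational, hence Galois-fixed'' trick replaces your zero-propagation step entirely.
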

\begin{proof}
  Suppose $\hs=(0\,1)(2\,\cdots\,r-1)$ for some
$\s \in \Gal(\CC)$. Since $S_{ij} = \pm S_{\hs(i) \hs\inv(j)}$ and $r$ odd,
  $$
  S_{11} = \e  \text{ and  } S_{ij} = \e_{ij} S_{02} = \e_{ij} d_2
  $$
 for all  $0 \le i \le 1,\, 2 \le j \le r-1$,  where $\e, \e_{ij}$ are $\pm 1$.
 In particular, the first two rows of the  matrix $S$ are real, $\s(d_2) = \e_{12}
d_2/d_1$, and $\frac{S_{1j}}{S_{0j}}=\frac{\e_{1j}}{\e_{0j}} \in \BZ$ for $j \ge
2$. Thus
  $$
  \frac{\e_{1j}}{\e_{0j}} =\frac{S_{1j}}{S_{0j}} =
\s\left(\frac{S_{1j}}{S_{0j}}\right)=\frac{S_{1\hs(j)}}{S_{0\hs(j)}} \text{ for all } j \ge
2\,,
  $$
  and hence
   $\frac{S_{1j}}{S_{0j}}=\frac{\e_{12}}{\e_{02}}=\e'$ for $j \ge 2$. By
orthogonality of the first two rows of $S$, we find
  $$
  0 = d_1(1+\e)+\sum_{j \ge 2} S_{1j}S_{0j} =d_1(1+\e)+\e'\sum_{j \ge 2}
S_{0j}^2 =d_1(1+\e)+\e'(r-2)d_2^2  \,.
  $$
  Since $r-2 \ne 0 $, $\e=1$  and
  $
  2  = - \e' (r-2) d_2^2/d_1\,.
  $
  Note that both $d_2/d_1$ and $d_2$ are algebraic integers.The equation implies
$d_2^2/d_1 \in \BZ$ and so $(r-2) \mid 2$. This is absurd as $r-2 \ge 3$.
\end{proof}
\begin{lem}\label{l:c62}
Suppose $\CC$ is a modular category of \emph{odd} rank $r\geq 5$. If the
isomorphism classes $r-2, r-1$ are self-dual, then
$$
(0\,1\, \cdots\, r-3)(r-2\,\,r-1) \not\in \Gal(\CC).
$$
\end{lem}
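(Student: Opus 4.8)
The plan is to pin down $\Gal(\CC)$ exactly (it turns out to be forced to be cyclic of order $2(r-2)$), then to use the Galois action on the rows of $S$ to extract enough rigidity to reach an algebraic‑integer contradiction that exploits the fact that $r-2$ is odd.

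\emph{Step 1 (the Galois group).} Set $\hs=(0\,1\,\cdots\,r-3)(r-2\;r-1)$ and suppose $\hs\in\Gal(\CC)$. Since $\Gal(\CC)$ is abelian it lies inside the centralizer of $\hs$ in $\Sym(\Pi_\CC)$; because $r-2$ is odd and $\ge 3$ (in particular $\ne 2$), $\hs$ has cycle type $(r-2,2)$ with distinct parts, so this centralizer is $\langle(0\,1\,\cdots\,r-3)\rangle\times\langle(r-2\;r-1)\rangle$, of order $2(r-2)=\ord(\hs)$. Hence $\Gal(\CC)=\langle\hs\rangle$. In particular $\langle 0\rangle=\{0,\dots,r-3\}$, so by Lemma~\ref{l:orbit} each of $0,\dots,r-3$ is self-dual; together with the hypothesis that $r-2,r-1$ are self-dual, $\CC$ is self-dual, i.e. $S$ is a real matrix. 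Also by Lemma~\ref{l:orbit}, $[\BK_0:\BQ]=r-2$ and $[\BK_{r-2}:\BQ]=2$; as subfields of the abelian extension $\BK_\CC$ these are Galois over $\BQ$ of coprime degree, whence $\BK_0\cap\BK_{r-2}=\BQ$.

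\emph{Step 2 (rigidity from the Galois action).} Writing $A=\{0,\dots,r-3\}$ and using $\s^k(\phi_0)=\phi_k$ for $k\in A$, one gets $S_{kj}=d_k\,\s^k(d_j)$ for all $k\in A$ and $j\in\Pi_\CC$; inserting this into the orthonormality of row $k$ (here reality of $S$ is used) gives $\s^k(D^2)=D^2/d_k^2$. Applying Lemma~\ref{l:4.5} to $\s^{r-2}$, whose associated permutation is the transposition $(r-2\;r-1)$ (order $2$, with fixed points since the rank is odd), produces $d_{r-2}=\epsilon_0 d_{r-1}$ for some $\epsilon_0\in\{\pm1\}$ and $S_{r-2,r-2}=S_{r-1,r-1}=:f$; combining with $S_{i,r-2}=d_i\s^i(d_{r-2})=\epsilon_0 d_i\s^i(d_{r-1})=\epsilon_0 S_{i,r-1}$ for $i\in A$. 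With $g:=S_{r-2,r-1}$, orthogonality of rows $r-2$ and $r-1$ then collapses to $D^2=(f-\epsilon_0 g)^2$. Dividing by $d_{r-2}^2$ and setting $q:=D^2/d_{r-2}^2$, one sees on the one hand $q=(\phi_{r-2}(r-2)-\epsilon_0\phi_{r-2}(r-1))^2\in\BK_{r-2}$, and on the other $q=\sum_j(d_j/d_{r-2})^2\in\BK_0$ since every ratio $d_j/d_{r-2}$ lies in $\BK_0$. Hence $q\in\BK_0\cap\BK_{r-2}=\BQ$. Applying $\s^k$ ($k\in A$) to this rational number and using $\s^k(d_{r-2})=S_{r-2,k}/d_k$ together with $\s^k(D^2)=D^2/d_k^2$ gives $q=D^2/S_{r-2,k}^2$, so $S_{r-2,k}^2=d_{r-2}^2$, i.e. $S_{r-2,k}=\pm d_{r-2}$ for every $k\in A$.

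\emph{Step 3 (the contradiction, and the hard point).} Orthonormality of row $r-2$ now reads $D^2=(r-2)d_{r-2}^2+f^2+g^2$, which together with $D^2=(f-\epsilon_0 g)^2$ forces $(r-2)d_{r-2}^2=-2\epsilon_0 fg$, i.e. $\phi_{r-2}(r-2)\,\phi_{r-2}(r-1)=-\epsilon_0(r-2)/2$. But $\phi_{r-2}(r-2)$ and $\phi_{r-2}(r-1)$ are eigenvalues of the integral fusion matrices $N_{r-2}$ and $N_{r-1}$, hence algebraic integers, so their product is an algebraic integer; being the rational number $\pm(r-2)/2$ with $r-2$ odd, that is impossible. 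This is absurd, so $\hs\notin\Gal(\CC)$. I expect the main obstacle to be the middle step: turning the soft input (cyclicity of $\Gal(\CC)$ and Lemma~\ref{l:4.5}) into the rigidity $S_{r-2,k}=\pm d_{r-2}$ for all $k\in A$. This rests on recognizing $q=D^2/d_{r-2}^2$ as \emph{rational} via the coprimality $\BK_0\cap\BK_{r-2}=\BQ$ and then re-applying Galois conjugation to it; the surrounding manipulations with orthogonality relations are routine, but that identification of $q$ is where the argument has real content.
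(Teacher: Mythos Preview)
Your proof is correct and reaches the same final contradiction as the paper: the product of two character values $\phi_{r-2}(\cdot)$ is forced to equal $\pm(r-2)/2$, which is a non-integral rational number, contradicting that these values are algebraic integers.

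The route, however, is noticeably longer than the paper's. The paper obtains the rigidity $S_{i,j}=\pm d_{r-2}$ for $i\in\{0,\dots,r-3\}$, $j\in\{r-2,r-1\}$ in a single step, directly from the relation $S_{ij}=\pm S_{\hs(i)\hs^{-1}(j)}$ (equation \eqref{eq:2}): since $r-2$ is odd, iterating $\hs$ moves any such $(i,j)$ to $(0,r-2)$. By contrast, you first pin down $\Gal(\CC)=\langle\hs\rangle$ exactly, deduce full self-duality, identify $q=D^2/d_{r-2}^2$ as lying in $\BK_0\cap\BK_{r-2}=\BQ$, and then Galois-conjugate $q$ to force $S_{r-2,k}^2=d_{r-2}^2$. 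This is a nice piece of reasoning, and the coprimality $\BK_0\cap\BK_{r-2}=\BQ$ is a pleasant observation, but it is not needed here: the paper's one-line use of \eqref{eq:2} subsumes your entire Step~2. What your approach buys is a more structural picture (you extract $\sigma^k(D^2)=D^2/d_k^2$ and the field lattice), which might generalize better; what the paper's buys is brevity. Incidentally, the paper does not need to determine $\Gal(\CC)$ in full or to establish self-duality of the labels $0,\dots,r-3$; it only uses that columns $r-2$ and $r-1$ are real, which is the stated hypothesis.
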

\begin{proof}
 Suppose  $\hs=(0\,1\, \cdots\,
r-3)(r-2\,\,r-1) \in \Gal(\CC)$. Since $S_{ij} = \pm S_{\hs(i)
\hs\inv(j)}$ and $r$ odd,
  $$
  S_{r-1, r-1} = \e  S_{r-2, r-2}  \text{ and  } S_{i,j} = \e_{ij} S_{0, r-2} =
\e_{ij} d_{r-2}
  $$
 for all  $0 \le i \le r-3,\, r-2 \le j \le r-1$,  where $\e, \e_{ij}$ are $\pm
1$. Therefore, for $0 \le i \le r-3$,
 $
\frac{S_{i,r-1}}{d_{r-1}} =  \s\left(\frac{S_{i,r-2}}{d_{r-2}}\right)  =
\frac{S_{i,r-2}}{d_{r-2}}.
 $
 Since the last two columns are real and orthogonal, we find
 $$
 0= S_{r-1, r-2} S_{r-1,r-1}(1+\e) +\sum_{i=0}^{r-3} S_{i,r-2} S_{i,r-1}=
 S_{r-1, r-2} S_{r-1,r-1}(1+\e) +(r-2)d_{r-1}d_{r-2}\,.
 $$
 Since $(r-2)d_{r-1}d_{r-2} \ne 0$ we must have $\e=1$, therefore
 $
 2\frac{S_{r-1,r-2}}{d_{r-2}} \frac{S_{r-1,r-1}}{d_{r-1}} = r-2\,.
 $
  Since $\frac{S_{r-1,r-2}}{d_{r-2}} \frac{S_{r-1,r-1}}{d_{r-1}}$ is an
algebraic
integer, the equation implies it is a rational integer and so  $r-2$ is even, a
contradiction.
 \end{proof}

 For weakly integral modular categories, a positive dimension function is constant on the orbits of the Galois action on
 $\Pi_\CC$ (via $\sigma\rightarrow \hs$):

\begin{lemma}
  \label{201301301500}

  Let $\CC$ be a weakly integral modular in which $d_a> 0$ for all $a \in \Pi_\CC$.  Then
 we have $d_{\hs\(a\)}=d_{a}$ for all $\s\in\Gal\(\mcC\)$ and $a \in \Pi_\CC$.

\end{lemma}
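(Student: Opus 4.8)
The plan is to show that every Galois conjugate of $d_a$ has the same absolute value as $d_a$; granting this, the lemma follows at once, since the Galois symmetry of the normalized $S$-matrix forces $\s(d_a)=\pm\,d_{\hs(a)}$, whence $d_{\hs(a)}=|\s(d_a)|=|d_a|=d_a$, the final equality using the positivity hypothesis on all the dimensions.

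First I would record two facts. \emph{(1) Each $d_a$ is the square root of a positive rational integer.} Since $\CC$ is weakly integral it is pseudo-unitary by \cite[Prop.~8.24]{ENO1}, so $D^2=\FPdim(\CC)\in\BZ$, and by \lemmaref{integrallemma}(i) we have $d_a=\pm\FPdim(V_a)$, hence $d_a^2=\FPdim(V_a)^2$. Recalling that in a weakly integral fusion category the square of the Frobenius--Perron dimension of each simple object is a positive rational integer (cf.\ \cite{ENO1}), we obtain $d_a^2=m_a\in\BZ_{>0}$; thus $d_a$ is a root of $x^2-m_a\in\BZ[x]$, so every element of $\Aut(\ol\BQ)$ sends $d_a$ to $\pm\sqrt{m_a}$, an algebraic number of absolute value $d_a$. \emph{(2) For $\s\in\Gal(\CC)$ one has $\s(d_a)=\pm\,d_{\hs(a)}$.} Observe that $d_a\in\BK_\CC$, so $\Gal(\CC)$ acts on it; lifting $\s$ to $\Aut(\BQA)$ (cf.\ Remark~\ref{r:gen_action}) and applying the Galois symmetry \eqref{eq:galois2} to the normalized pair $(S/D,T/y)$ with $i=0$ gives $\s(d_a/D)=\e_\s(a)\,d_{\hs(a)}/D$, i.e.\ $\s(d_a)=\e_\s(a)\bigl(\s(D)/D\bigr)d_{\hs(a)}$. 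As $D^2\in\BZ$ we have $\s(D)=\pm D$, so indeed $\s(d_a)=\pm\,d_{\hs(a)}$, and in particular $|\s(d_a)|=d_{\hs(a)}$.

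Combining (1) and (2), $d_{\hs(a)}=|\s(d_a)|=\sqrt{m_a}=d_a$ for every $\s\in\Gal(\CC)$ and every $a\in\Pi_\CC$, which is the claim. The single nonelementary ingredient is the fact used in (1) that weak integrality forces $\FPdim(V_a)^2\in\BZ$; everything else is bookkeeping with \eqref{eq:galois2}. If one preferred to sidestep that citation, one could instead note that the $\Gal(\CC)$-conjugates of $d_a^2$ are exactly $\{d_b^2\mid b\in\langle a\rangle\}$, so the assertion is equivalent to $d_a^2\in\BQ$; but proving this directly seems to need essentially the same input, so I expect the citation to be the most economical route.
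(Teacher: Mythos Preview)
Your proof is correct and follows essentially the same route as the paper: both arguments use the Galois symmetry \eqref{eq:galois2} on the normalized matrix $s=S/D$ to get $\s(d_a/D)=\pm\,d_{\hs(a)}/D$, together with the rationality of $d_a^2/D^2$ coming from weak integrality, and then conclude via positivity. The paper's version is marginally more economical in that it applies $\s$ directly to the rational number $s_{0a}^2=d_a^2/D^2$ (so no need to separately argue that $\s(D)=\pm D$), but your decomposition into facts (1) and (2) is perfectly fine and arguably clearer.
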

\begin{proof}

Since $\CC$ is weakly integral, $d_a^2/D^2 \in \BQ$.
Consider the Galois group action on the normalized $S$-matrix $s=\frac{1}{D}S$. We find
$d_a^2/D^2 = \s(d^2_a/D^2) =  d^2_{\hs(a)}/D^2$ for all $\s\in\Gal\(\mcC\)$ and $a \in \Pi_\CC$, and so the result follows.
\end{proof}

  \subsection{Modularly Admissible Fields}
    \label{Number Theory}

The abelian number fields $\mbbF_t,\mbbF_T,\mbbF_{s}$ and $\mbbF_S$ described in Section \ref{galsymsection} (see also \cite{DLN1,RSW}) have the lattice relations
\begin{align}
\label{201303031822}
\vcenter{\hbox{
\begin{tikzpicture}[scale=0.75]
  \node[] (t) at (0,0) {$\mbbF_{t}$};
  \node[] (T) at (-2,-2) {$\mbbF_{T}$};
  \node[] (s) at (2,-2) {$\mbbF_{s}$};
  \node[] (S) at (0,-4) {$\mbbF_{S}$};
  \path[-,draw=black] (t) -- (T);
  \path[-,draw=black] (t) -- (s);
  \path[-,draw=black] (s) -- (S);
  \path[-,draw=black] (T) -- (S);
\end{tikzpicture}
}}\,.
\end{align}
Moreover, by Lemma \ref{l:2group}, the Galois group $\Gal(\BF_t/\BF_S)$ is an elementary 2-group. This implies all the subextensions among these fields will satisfy the same condition. We will call the extension $\BL/\BK$ \defnfont{modularly admissible} if $\BL$ is a cyclotomic field and $\Gal(\BL/\BK)$ is an elementary 2-group, i.e. $\BL$ is a multi-quadratic extension of $\BK$.
In this section we will describe the conductor of a cyclotomic field $\BL$ when $\BL/\BK$ is modularly admissible and $[\BK: \BQ]$ is a prime power.

\begin{remark}
If $\BL/\BK$ is modularly admissible, then $\BL'/\BK'$ is also modularly admissible for any subextensions $\BK'\subset \BL'$ of $\BK$ in $\BL$.  In particular, $\BQ_\mff/\BK$ is modularly admissible where $\mff:=\mff\(\mbbK\)$ is the \emph{conductor} of
$\mbbK$, i.e. the smallest integer $n$ such that $\BK$ embeds into $\BQ_n$.
\end{remark}

A restatement of \cite[Prop. 6.5]{DLN1} in this terminology is:
\begin{prop}
  \label{201302041108}
  If $\BQ_n/\BK$ is modularly admissible and  $\mff$ is the conductor of $\BK$, then
  \begin{enumerate}
    \item $\frac{n}{\mff} \divides 24$  and $\gcd(\frac{n}{\mff} ,\mff)\divides 2$ and
    \item $\Gal\(\BQ_n/\mbbQ_{\mff}\)$ is subgroup of $(\BZ/2\BZ)^3$. \hfill\qed
  \end{enumerate}
\end{prop}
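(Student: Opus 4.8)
The plan is to reduce the statement to a prime-by-prime examination of the reduction maps between unit groups of residue rings, using only the structure of $(\BZ/m\BZ)^{\times}$ and the hypothesis that $\Gal(\BQ_n/\BK)$ is an elementary abelian $2$-group. First I would record that, by the defining property of the conductor, $\BK\subseteq\BQ_n$ is equivalent to $\mff\mid n$ (in particular $\mff\not\equiv 2\pmod 4$); hence $\BK\subseteq\BQ_\mff\subseteq\BQ_n$, so $\Gal(\BQ_n/\BQ_\mff)$ is a subgroup of $\Gal(\BQ_n/\BK)$ and is therefore itself an elementary abelian $2$-group. Setting $a_p=v_p(n)$ and $b_p=v_p(\mff)\le a_p$ for each prime $p$, the Chinese Remainder Theorem identifies
\[
\Gal(\BQ_n/\BQ_\mff)\;\cong\;\prod_p K_p,\qquad K_p:=\ker\!\bigl((\BZ/p^{a_p}\BZ)^{\times}\to(\BZ/p^{b_p}\BZ)^{\times}\bigr),
\]
so each factor $K_p$ must be an elementary abelian $2$-group.

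For an odd prime $p$ the group $(\BZ/p^{a_p}\BZ)^{\times}$ is cyclic, hence so is $K_p$; its order is $p^{a_p-b_p}$ when $b_p\ge 1$ and $p^{a_p-1}(p-1)$ when $b_p=0$. A finite cyclic elementary abelian $2$-group has order $1$ or $2$, which forces $K_p=1$ (so $a_p=b_p$) in every case except $p=3$, $b_3=0$, $a_3=1$, where $K_3\cong\BZ/2\BZ$. Thus the odd part of $n/\mff$ divides $3$, and the gcd of this odd part with $\mff$ is $1$.

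For $p=2$ I would use $(\BZ/2^{a}\BZ)^{\times}\cong\langle-1\rangle\times\langle5\rangle$, with $\langle5\rangle$ cyclic of order $2^{a-2}$ for $a\ge 2$ and trivial for $a\le 2$, together with the fact that $b_2:=v_2(\mff)\neq 1$. Tracing the reduction map on both factors in the three cases $b_2=0$, $b_2=2$, and $b_2\ge 3$ shows that "$K_2$ is an elementary abelian $2$-group" forces $a_2\le 3$ when $b_2\in\{0,2\}$ and $a_2\le b_2+1$ when $b_2\ge 3$; moreover $K_2$ is a subgroup of $(\BZ/2\BZ)^2$ when $b_2=0$ and of $\BZ/2\BZ$ otherwise. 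Consequently $v_2(n/\mff)=a_2-b_2\le 3$, and $v_2(\gcd(n/\mff,\mff))=\min(a_2-b_2,\,b_2)\le 1$ in every case. Putting the two analyses together, $n/\mff$ has only $2$ and $3$ as prime divisors with $v_2\le 3$ and $v_3\le 1$, so $n/\mff\mid 24$; the same computations give $\gcd(n/\mff,\mff)\mid 2$; and $\Gal(\BQ_n/\BQ_\mff)\cong K_2\times K_3$ embeds into $(\BZ/2\BZ)^2\times\BZ/2\BZ=(\BZ/2\BZ)^3$.

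The step requiring the most care is the $2$-adic case analysis: since $(\BZ/2^{a}\BZ)^{\times}$ need not be cyclic, one has to follow the reduction map separately on the $\langle-1\rangle$ and $\langle5\rangle$ factors, using the hypothesis $v_2(\mff)\neq 1$ (automatic since $\mff$ is a conductor, as conductors are never $\equiv 2\pmod 4$) to keep the casework to the three cases above. Everything else is immediate from the structure of $(\BZ/m\BZ)^{\times}$ and the observation that a finite cyclic $2$-group is elementary abelian only in orders $1$ and $2$.
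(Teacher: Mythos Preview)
Your argument is correct. The reduction to the prime-by-prime kernel analysis via the Chinese Remainder Theorem is clean, and the case split at $p=2$ using $b_2\neq 1$ is handled accurately; in particular your bounds $a_2\le 3$ for $b_2\in\{0,2\}$ and $a_2\le b_2+1$ for $b_2\ge 3$ are exactly what the structure of $(\BZ/2^a\BZ)^\times$ forces, and the resulting embedding $K_2\times K_3\hookrightarrow(\BZ/2\BZ)^2\times\BZ/2\BZ$ gives both conclusions at once.

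As for comparison: the paper does not prove this proposition at all. It is stated as a restatement of \cite[Prop.~6.5]{DLN1} and marked with a \qedsymbol, so the authors simply import the result from the literature. Your write-up therefore supplies a self-contained elementary proof where the paper appeals to an external reference; the argument you give is in fact the natural one (and presumably close to what appears in \cite{DLN1}), so nothing is lost and the exposition becomes more self-contained.
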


Recall that $\Gal(\BQ_n/\BQ) \cong (\BZ/n \BZ)^\times$ and that for any subfield $\BK$ of $\BQ_n$, we have the exact sequence
\begin{equation}\label{eq:exact_sequence}
1 \to \Gal(\BQ_n/\BK) \to \Gal(\BQ_n/\BQ) \xrightarrow{\res} \Gal(\BK/\BQ) \to 1\,.
\end{equation}

In addition, if $\BQ_n/\BK$ is modularly admissible, then $\Gal(\BQ_n/\BK)$ is isomorphic to a subgroup of the maximal elementary 2-subgroup, $\Omega_2(\BZ/n \BZ)^\times$, of $(\BZ/n \BZ)^\times$. In particular, $\frac{(\BZ/n \BZ)^\times}{\Omega_2(\BZ/n \BZ)^\times}$ is a homomorphic image of $\Gal(\BK/\BQ)$.

\begin{lem}\label{p:odd_primes}
  If $\BQ_n/\BK$ is modularly admissible and $\[\BK:\mbbQ\]$ is
  odd, then $\Gal(\BK/\BQ) \cong \frac{(\BZ/n \BZ)^\times}{\Omega_2(\BZ/n \BZ)^\times}$ and $q \equiv 3 \mod 4$ for any odd prime $q \mid n$. If, in addition, $\[\BK:\mbbQ\]$ is a power of an odd prime $p$, then every prime factor $q>3$ of $n$ is a simple factor of the form $q = 2p^r+1$ for some  integer $r\ge 1$. Moreover, if $p > 3$, then $r$ must be odd and $p \equiv 2 \mod 3$.
\end{lem}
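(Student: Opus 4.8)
The plan is to transport everything into the abelian group $G:=\Gal(\BQ_n/\BQ)\cong(\BZ/n\BZ)^\times$ and analyze its Sylow $2$-subgroup. First I would establish the isomorphism. By the exact sequence \eqref{eq:exact_sequence}, $H:=\Gal(\BQ_n/\BK)$ is a subgroup of $G$ of odd index $[\BK:\BQ]$; since $G$ is abelian its Sylow $2$-subgroup $G_2$ is unique, and its image in the odd-order quotient $G/H\cong\Gal(\BK/\BQ)$ is a $2$-group inside a group of odd order, hence trivial, so $G_2\subseteq H$. Modular admissibility of $\BQ_n/\BK$ says that $H$ is an elementary abelian $2$-group, so $H\subseteq\Omega_2(G)$; but $\Omega_2(G)\subseteq G_2$ always holds, whence $\Omega_2(G)\subseteq G_2\subseteq H\subseteq\Omega_2(G)$, forcing $H=\Omega_2(G)$ (and, incidentally, $G_2$ is elementary abelian). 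Feeding this back into \eqref{eq:exact_sequence} gives $\Gal(\BK/\BQ)\cong G/\Omega_2(G)$.

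Next I would use the CRT decomposition $G\cong\prod_{q\mid n}(\BZ/q^{a_q}\BZ)^\times$, where $q^{a_q}$ is the exact power of $q$ dividing $n$, together with the fact that $\Omega_2$ respects direct products; elementary-abelianness of $G_2$ then means each factor has elementary abelian Sylow $2$-subgroup. For an odd prime $q$ this factor is cyclic of order $q^{a_q-1}(q-1)$ with odd $q$-part, so its $2$-part has order dividing $2$, which forces $4\nmid q-1$, i.e. $q\equiv3\pmod4$. This is the second assertion.

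For the prime-power case I would assume $[\BK:\BQ]=p^k$ with $p$ an odd prime, so that, by the CRT again, $G/\Omega_2(G)$ is the product over $q\mid n$ of the cyclic groups $(\BZ/q^{a_q}\BZ)^\times\big/\Omega_2\bigl((\BZ/q^{a_q}\BZ)^\times\bigr)$, each of which must therefore be a $p$-group. The factor at $q=2$ is trivial (as $(\BZ/2^{a_2}\BZ)^\times$ is already elementary abelian, which incidentally forces $a_2\le 3$, consistent with Proposition~\ref{201302041108}). For an odd $q\mid n$ (so $q\equiv3\pmod4$ by the previous step), the corresponding factor is cyclic of order $q^{a_q-1}(q-1)/2$, which must be a power of $p$. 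Now fix $q>3$, so $(q-1)/2\ge2$. If $a_q\ge2$ then $q$ divides $q^{a_q-1}(q-1)/2$, forcing $q=p$; but then $(q-1)/2$ is also a power of $q$, which is impossible since the only power of $q$ that is less than $q$ is $q^0=1$ while $(q-1)/2=1$ only when $q=3$. Hence $a_q=1$, i.e. $q$ is a simple factor of $n$, and $(q-1)/2=p^r$ for some $r\ge0$; since $r=0$ would again give $q=3$, we conclude $r\ge1$ and $q=2p^r+1$.

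Finally, suppose $p>3$ and reduce $q=2p^r+1$ modulo $3$: since $q$ is a prime $>3$ we have $3\nmid q$, i.e. $p^r\not\equiv1\pmod3$. As $p\ne3$, either $p\equiv1\pmod3$ — impossible, since then $p^r\equiv1$ — or $p\equiv2\pmod3$; in the latter case $p^r\equiv(-1)^r\pmod3$, and $p^r\not\equiv1$ forces $r$ to be odd. The main obstacle is the bookkeeping in the prime-power step: cleanly separating the cases $q=3$, $q=p$, and $q>3$ with $q\ne p$, and ruling out higher prime-power divisors $q^{a_q}$ with $a_q\ge2$ for $q>3$; everything else is routine finite group theory and elementary congruences.
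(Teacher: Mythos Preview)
Your proof is correct and follows essentially the same approach as the paper's: identify $\Gal(\BQ_n/\BK)$ with both the Sylow $2$-subgroup and $\Omega_2$ of $(\BZ/n\BZ)^\times$, then use the CRT factorization to read off the congruence conditions on odd primes $q\mid n$ and, in the prime-power case, force $a_q=1$ and $q=2p^r+1$. Your version is a bit more explicit in the bookkeeping (the chain $\Omega_2(G)\subseteq G_2\subseteq H\subseteq\Omega_2(G)$ and the separate handling of $q=2$, $q=p$, and $q>3$ with $q\ne p$), but there is no substantive difference in strategy.
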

\begin{proof} It follows from the exact sequence \ref{eq:exact_sequence} that
$\Gal(\BQ_n/\BK)$ is a Sylow 2-subgroup of $\Gal(\BQ_n/\BQ)$ and hence
$\Gal(\BQ_n/\BK) = \Omega_2 \Gal(\BQ_n/\BQ)$. Therefore, we obtain the isomorphism $\Gal(\BK/\BQ) \cong \frac{(\BZ/n \BZ)^\times}{\Omega_2(\BZ/n \BZ)^\times}$. Suppose $q>3$ is a prime factor of $n$ and $\ell$ is the largest integer such that $q^\ell \mid n$. Then, by the Chinese Remainder Theorem  $(\BZ/{q^{\ell}}\BZ)^\times$ is a direct summand of $\(\mbbZ/n\mbbZ\)^{\times}$, and hence $\frac{(\BZ/{q^{\ell}}\BZ)^\times}{\Omega_2(\BZ/{q^{\ell}}\BZ)^\times}$ is isomorphic to a subgroup of $\Gal(\BK/\BQ)$. In particular, $\vph(q^\ell)/2 = q^\ell \(\frac{q-1}{2}\)$ is odd, and this implies $q \equiv 3 \mod 4$.

If, in addition, $[\BK:\BQ]=p^h$ for some $h \ge 0$, then $q^{\ell-1}\(\frac{q-1}{2}\) \mid p^h$ when $q>3$. This forces $\ell=1$ and $q =2\cdot p^r +1$ for some positive integer $r \le h$. Furthermore, if $p> 3$,  then $q =2\cdot  p^r +1 \equiv 0\mod 3$ whenever $r$ is even or $p \equiv 1\mod 3$. The last statement then follows.
\end{proof}

When the abelian number field $\BK$ has a prime power degree over $\BQ$, more refined statements on a modularly admissible extension $\BQ_n/\BK$ can now be stated as

\begin{prop}
  \label{201301221709}
  Let $\BQ_n/\BK$ be a modularly admissible extension and $$\Gal(\BK/\BQ)\cong \BZ/p^{r_1} \BZ \times \cdots \times \BZ/p^{r_m} \BZ$$  for some prime $p$ and  $0<r_1 \le \cdots \le r_m$, and set $q_j= 2\cdot p^{r_j}+1$ for $j=1,\ldots,m$. Then:
  \begin{enumerate}
    \item If $p>3$, then
    $n$ admits the factorization
    $
    n=f \cdot q_1\cdots q_m
    $
    where $f \mid 24$ and $q_1 , \dots, q_m$ are distinct primes. In particular, $r_1, \ldots, r_m$ are distinct odd integers and $p \equiv 2 \mod 3$.
    \item For $p=3$,  one of the following two statements holds.
    \begin{enumerate}
    \item $9 \nmid n$ and
    $
     n = f \cdot q_1\cdots q_m
    $
    where $f \mid 24$ and $q_1 , \dots, q_m$ are distinct primes.
   \item $9 \mid n$, and there exists $i \in \{1, \dots, m\}$ such that
   $\{q_j \mid j \ne i\}$ is a set of $m-1$ distinct primes and
    $
    n =  f \cdot 3^{r_i+1}\cdot q_1\cdots q_{r_{i-1}}\cdot q_{r_{i+1}}\cdots q_m
    $
   where $f \mid 8$.
    \end{enumerate}
    \item For $p=2$,
    $
    n=2^a \cdot p_1 \cdots p_l
    $
    where $p_1, \dots, p_l$ are distinct Fermat primes and $a$ is a non-negative integer.
  \end{enumerate}
\end{prop}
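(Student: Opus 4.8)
The plan is to recast the hypothesis as a statement about the finite abelian group $G=\Gal(\BQ_n/\BQ)\cong(\BZ/n\BZ)^\times$ and its subgroup $H=\Gal(\BQ_n/\BK)$. Modular admissibility means $H$ is an elementary abelian $2$-group, and the exact sequence \eqref{eq:exact_sequence} identifies $G/H$ with $\Gal(\BK/\BQ)\cong\BZ/p^{r_1}\BZ\times\cdots\times\BZ/p^{r_m}\BZ$. Comparing orders, $|G|=|H|\cdot p^{r_1+\cdots+r_m}$ with $|H|$ a power of $2$, so the only primes dividing $\vph(n)=|G|$ are $2$ and $p$. Since $G=\prod_{\ell^e\|n}(\BZ/\ell^e\BZ)^\times$ and $\vph(\ell^e)=\ell^{e-1}(\ell-1)$, this already forces: an odd prime $\ell\ne p$ dividing $n$ satisfies $\ell\|n$ and $\ell-1=2^\alpha p^\beta$; and if $p$ is odd and $p\mid n$, then $p-1$ is a power of $2$, i.e. $p$ is a Fermat prime.

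The crucial step is to pin down $H$ exactly. Write $G=T\times P$, the product of its Sylow $2$-subgroup $T$ and its Sylow $p$-subgroup $P$ (a direct product because $\vph(n)$ has only the prime divisors $2$ and $p$; when $p=2$ we have $P=1$ and $G=T$). For $p$ odd, any $h\in H$ has order dividing $2$, forcing its $P$-component to vanish, so $H\le T$; then $G/H\cong(T/H)\times P$ is a $p$-group, so the $2$-group $T/H$ is trivial. Hence $H=T$, so $\Gal(\BK/\BQ)\cong P$ and, crucially, $T$ is \emph{elementary abelian}. This sharpens the constraints: $v_2(n)\le 3$ (since $(\BZ/2^a\BZ)^\times$ is elementary abelian only for $a\le 3$); every odd $\ell\ne p$ dividing $n$ has $v_2(\ell-1)=1$, hence $\ell-1=2p^\beta$, so $\ell=3$ (if $\beta=0$) or $\ell=2p^\beta+1$ with $\beta\ge 1$; and when $p$ is odd, $p\mid n$ forces $p=3$ (a Fermat prime that is $\equiv 3\bmod 4$ equals $3$). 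In particular, if $p=2$ then $\vph(n)$ is a power of $2$, which holds exactly when $n=2^a p_1\cdots p_l$ with distinct Fermat primes $p_1,\dots,p_l$; this is case (iii).

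For $p$ odd one reads $P$ off locally: $P\cong\prod_\ell\BZ/p^{v_p(\ell-1)}\BZ$ over the odd primes $\ell\ne p$ dividing $n$, plus one extra factor $\BZ/p^{v_p(n)-1}\BZ$ when $p\mid n$ (which, by the above, happens only if $p=3$). For $p>3$ the prime $\ell=3$ contributes the trivial group, so $P\cong\prod_i\BZ/p^{c_i}\BZ$, where the $q_i:=2p^{c_i}+1$ (with $c_i\ge1$) enumerate the odd prime divisors of $n$ different from $3$. Since the $q_i$ are distinct and each is recovered from $c_i$ via $q_i-1=2p^{c_i}$, the $c_i$ are pairwise distinct; matching $\prod_i\BZ/p^{c_i}\BZ\cong\prod_j\BZ/p^{r_j}\BZ$ and using uniqueness of the primary decomposition of a finite abelian $p$-group identifies the multisets $\{c_i\}=\{r_j\}$. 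Hence the $r_j$ are distinct, $m$ is the number of these $q_i$, and after reindexing $q_j=2p^{r_j}+1$; together with $v_2(n)\le 3$ and $v_3(n)\le 1$ this yields $n=f\cdot q_1\cdots q_m$ with $f=2^{v_2(n)}3^{v_3(n)}\mid 24$, which is case (i). That each $r_j$ is odd and $p\equiv2\bmod3$ then follows from \lemmaref{p:odd_primes} (or directly: $q_j=2p^{r_j}+1$ is prime and exceeds $3$, so $p^{r_j}\not\equiv1\bmod3$). For $p=3$ the argument is identical except that $\ell=p=3$ may divide $n$: if $9\nmid n$ this contributes nothing and we land in case (ii)(a) as above; if $9\mid n$ it adds the cyclic factor $\BZ/3^{v_3(n)-1}\BZ$ with $v_3(n)-1\ge1$, so $P\cong\prod_i\BZ/3^{c_i}\BZ\times\BZ/3^{v_3(n)-1}\BZ$, and matching with $\prod_j\BZ/3^{r_j}\BZ$ shows $m$ exceeds the number of $q_i$'s by exactly one, allows an index $i$ with $r_i=v_3(n)-1$ to be singled out, and leaves the remaining $r_j$ ($j\ne i$) equal to the distinct $c$'s; since the $3$-part of $n$ is now $3^{r_i+1}$, the leftover $f=2^{v_2(n)}$ divides $8$, which is case (ii)(b).

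I expect the main obstacle to be the bookkeeping in the case $p=3$ with $9\mid n$: one must verify that the prime $3$ contributes a single cyclic factor $\BZ/3^{v_3(n)-1}\BZ$ to the Sylow $3$-subgroup, that this raises the number of invariant factors by exactly one, and that an index $i$ with $r_i=v_3(n)-1$ can always be chosen so that the remaining $q_j$ form a genuine set of $m-1$ distinct primes — the delicate point being that $v_3(n)-1$ may coincide with one of the $c_i$. Everything else is a routine unwinding of the structure of $(\BZ/n\BZ)^\times$ and of finite abelian $p$-groups.
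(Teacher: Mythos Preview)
Your proof is correct and follows essentially the same route as the paper's. Both arguments hinge on the observation that, for $p$ odd, the subgroup $H=\Gal(\BQ_n/\BK)$ is exactly the Sylow $2$-subgroup of $G=(\BZ/n\BZ)^\times$ (you show this directly via $G=T\times P$ and $H\le T$ with $T/H$ trivial; the paper encodes it in \lemmaref{p:odd_primes} as $\Gal(\BK/\BQ)\cong G/\Omega_2 G$), from which the elementary-abelian constraint on the $2$-part and the invariant-factor matching on the $p$-part follow by the Chinese Remainder decomposition. Your treatment is slightly more self-contained, and your flagged ``delicate point'' in case (ii)(b) --- that $v_3(n)-1$ may coincide with some $c_i$ --- is indeed harmless: the multiset equality $\{c_1,\dots,c_l,\,v_3(n)-1\}=\{r_1,\dots,r_m\}$ together with the distinctness of the $c_i$ guarantees that after removing one occurrence of $v_3(n)-1$ the remaining $r_j$ are exactly the distinct $c$'s, so the corresponding $q_j$ are $m-1$ distinct primes. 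The paper glosses over this point just as you do.
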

\begin{proof} For $p=2$, the exact sequence \ref{eq:exact_sequence} implies that $\Gal(\BQ_n/\BQ)$ is a 2-group and so $\vph(n)$ is a power of $2$. Hence,  (iii) follows.

For  any odd prime $p$, it follows from Lemma \ref{p:odd_primes} that $n=2^a 3^b q_1 \cdots q_l$ for some integers $a, b \ge 0$ and odd primes $q_1 < \cdots < q_l$ of the form $q_j = 2 p^ {a_j}+1$ for some integer $a_j \ge 1$. Therefore,
\begin{equation}
\Gal(\BK/\BQ)  \cong \frac{(\BZ/n\BZ)^\times}{\Omega_2(\BZ/n\BZ)^\times } \cong  \frac{(\BZ/2^a\BZ)^\times}{\Omega_2(\BZ/2^a\BZ)^\times } \times \frac{(\BZ/3^b\BZ)^\times}{\Omega_2(\BZ/3^b\BZ)^\times } \times
\BZ/p^{a_1} \BZ \times \cdots \times \BZ/p^{a_l}\BZ \,.
\end{equation}
For $p > 3$, $(\BZ/2^a\BZ)^\times \times (\BZ/3^b\BZ)^\times$ must be an elementary 2-group otherwise the $p$ power $\left|\frac{(\BZ/n \BZ)^\times}{\Omega_2(\BZ/n \BZ)^\times}\right|$ has a factor of $2$ or $3$. Therefore,
$0 \le a \le 3$, $0 \le b \le 1$  (or equivalently, $f = 2^a 3^b$ is a divisor of 24), and
$$
\Gal(\BK/\BQ)  \cong \BZ/p^{a_1} \BZ \times \cdots \times \BZ/p^{a_l}\BZ\,.
$$
By the uniqueness of invariant factors, $l = m$ and $a_j = r_j$ for $j=1, \dots, m$.  The last statement of (i) follows directly from Lemma \ref{p:odd_primes}.

For $p=3$ and $9 \nmid n$,  the argument for the case $p>3$ can be repeated here to arrive the same conclusion (iii)(a). For $p=3$ and $9 \mid n$,
$b \le 2$ and so
  $$
\Gal(\BK/\BQ)   \cong  \frac{(\BZ/2^a\BZ)^\times}{\Omega_2(\BZ/2^a\BZ)^\times } \times \BZ/3^{b-1}\BZ
\times
\BZ/p^{a_1} \BZ \times \cdots \times \BZ/p^{a_l}\BZ \,.
$$
 Therefore, $ (\BZ/2^a\BZ)^\times$ is an elementary 2-group, or $0 \le a \le 3$. By the uniqueness of invariant factors $l =m-1$, $b-1 = r_i$ for some $i$ and
$(a_1, \dots, a_{m-1}) = (r_1, \dots, \hat{r_i}, \dots r_m)$. This proves (iii)(b).
\end{proof}

\begin{cor}
  \label{201302011406}
If $\BQ_n/\BK$ is modularly admissible and $\BK/\BQ$ is a multi-quadratic extension, then $n \mid 240$.
\end{cor}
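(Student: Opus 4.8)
The plan is to convert the two hypotheses into a single constraint on the group $(\BZ/n\BZ)^\times$. Since $\BK/\BQ$ is multi-quadratic, $\Gal(\BK/\BQ)$ is an elementary abelian $2$-group, and since $\BQ_n/\BK$ is modularly admissible, $\Gal(\BQ_n/\BK)$ is an elementary abelian $2$-group by definition. Feeding both facts into the exact sequence \eqref{eq:exact_sequence}
\[
1 \to \Gal(\BQ_n/\BK) \to \Gal(\BQ_n/\BQ) \xrightarrow{\res} \Gal(\BK/\BQ) \to 1
\]
exhibits $\Gal(\BQ_n/\BQ) \cong (\BZ/n\BZ)^\times$ as an extension of an elementary abelian $2$-group by an elementary abelian $2$-group. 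Consequently, for every $g \in (\BZ/n\BZ)^\times$ one has $g^2 \in \Gal(\BQ_n/\BK)$ and hence $g^4 = 1$; that is, $(\BZ/n\BZ)^\times$ has exponent dividing $4$.

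The second step is to read off the admissible $n$ from this exponent bound. Writing $n = \prod_i p_i^{a_i}$ and using the Chinese Remainder Theorem decomposition $(\BZ/n\BZ)^\times \cong \prod_i (\BZ/p_i^{a_i}\BZ)^\times$, it suffices that each factor have exponent dividing $4$. For an odd prime $p$ the group $(\BZ/p^{a}\BZ)^\times$ is cyclic of order $p^{a-1}(p-1)$; since $p \nmid 4$, the divisibility $p^{a-1}(p-1) \mid 4$ forces $a = 1$ and $p - 1 \mid 4$, i.e. $p \in \{3,5\}$. For $p = 2$ one has $(\BZ/2^{a}\BZ)^\times \cong \BZ/2\BZ \times \BZ/2^{a-2}\BZ$ when $a \ge 3$, whose exponent is $\le 4$ exactly when $a \le 4$ (and the exponent is $\le 2$ when $a \le 2$). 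Hence $n \mid 2^4 \cdot 3 \cdot 5 = 240$. (Alternatively one can begin from Proposition \ref{201301221709}(iii), which already gives $n = 2^a p_1 \cdots p_l$ with the $p_i$ distinct Fermat primes, and then invoke the exponent-$4$ computation only to discard the Fermat primes $\ge 17$ and to bound $a$ by $4$.)

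I do not expect a genuine obstacle here: the argument is a short group-theoretic observation followed by a routine prime-by-prime check. The only places warranting care are (i) ensuring the two ``elementary $2$-group'' hypotheses combine to give exponent dividing $4$ rather than merely $8$ — which is precisely the point that the kernel subgroup in the extension is itself annihilated by squaring — and (ii) in the CRT step, remembering that $(\BZ/2^a\BZ)^\times$ carries a $\BZ/2\BZ$ direct summand, so that the correct bound on the $2$-part is $2^4$ and not something smaller.
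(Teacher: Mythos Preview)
Your proof is correct and follows essentially the same route as the paper: both deduce from the exact sequence that $(\BZ/n\BZ)^\times$ has exponent dividing $4$, and then bound the prime factorization of $n$. The only cosmetic difference is that the paper first invokes Proposition~\ref{201301221709}(iii) to write $n = 2^a p_1\cdots p_l$ with Fermat primes $p_i$ and then uses the exponent bound to discard $p_i > 5$ and force $a \le 4$, whereas your direct CRT computation reaches the same conclusion without that detour (as you yourself note in the parenthetical alternative).
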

\begin{proof}
Since $\Gal(\BQ_n/\BK)$ and $\Gal(\BK/\BQ)$ are elementary 2-groups, in view of \eqref{eq:exact_sequence}, $\Gal(\BQ_n/\BQ)$ is an abelian 2-group whose exponent $e \mid 4$. By Proposition \ref{201301221709} (iii), $n=2^a p_1\cdots p_l$ where $a \ge 0$ and $p_1 < \cdots< p_l$ are  Fermat primes. If $p_l > 5$, then $\Gal(\BQ_n/\BQ)$ has a cyclic subgroup of order $p_l-1>4$; this contradicts $e\mid 4$. On the other hand, if $a \ge 5$, $\Gal(\BQ_n/\BQ)$ has a cyclic subgroup of order 8 which is also absurd. Therefore, $n$ must be a factor of $2^4 \cdot 3\cdot 5=240$.
\end{proof}

These techniques combined with the Cauchy Theorem  \cite[Theorem 3.9]{BNRW1}
can be used to classify low rank integral modular categories with a given Galois group.  For example:
\begin{lemma}
    There are no rank 7 integral modular categories satisfying $\Gal\(\mcC\)\cong\mbbZ/5\mbbZ$.
  \end{lemma}
 \begin{proof} We may assume $d_a > 0$ for all $a \in \Pi_\CC$, by Lemma \ref{integrallemma}.
 By applying Lemma \ref{201301301500} we see that the dimensions are $1$, $d_1$ (with multiplicity $5$) and $d_2$ (with multiplicity $1$).
In this case \propref{201301221709}(i) and the Cauchy Theorem imply that the
prime divisors of $d_1,d_2$ and $D^2$ lie in $\{2,3,11\}$. Moreover, $D\in\BZ$ since $|\Gal(\CC)|$ is odd (recall that $\Gal(\CC)=\Gal(S)$).  Examining the dimension equation $D^2=1+d_1^2+5d_2^2$ modulo $5$ we obtain $D^2=1+d_1^2$.  The non-zero squares modulo $5$ are $\pm 1$, so $D^2,d_1^2\in\{\pm 1\}$ which give no solutions.
 \end{proof}

  \subsection{Representation Theory of ${\rm SL}(2,\Z)$}
    \label{Representation Theory of SL(2,Z)}

\begin{defn}
  Let $\rho: {\rm SL}(2,\Z) \to \GLC{r}$ be a representation of ${\rm SL}(2,\Z)$.
  \begin{enumerate}
    \item[(i)] $\rho$ is said to be \defnfont{non-degenerate} if the $r$ eigenvalues
      of $\rho(\ft)$ are distinct.
    \item[(ii)] $\rho$ is said to be \defnfont{admissible} if there exists modular
      category $\CC$ over $\BC$ of rank $r$ such that $\rho$ is a modular
      representation of $\CC$ relative to certain ordering of $\Pi_\CC = \{V_0, V_1,
      \dots, V_{r-1}\}$ with $V_0$ the unit object of $\CC$. In this case, we say that
      $\rho$ can be \defnfont{realized} by the modular category $\CC$.
    \item[(iii)] $\ARep({\rm SL}(2,\Z))$ denotes the set of all complex admissible
      ${\rm SL}(2,\Z)$-representation.
  \end{enumerate}
\end{defn}


By \cite{DLN1}, an admissible representation $\rho: {\rm SL}(2,\Z) \to \GLC{r}$  must
factor through $\SL{n}$ where $n=\ord\rho(\ft)$, and $\rho$ is $\BQ_n$-rational.  In particular
admissible representations are completely reducible.
It follows from \cite[Lem. 1]{Eh1} that each non-degenerate admissible
representation of ${\rm SL}(2,\Z)$ is absolutely irreducible.  Moreover, by \cite{TubaWenzl} any irreducible
representation of ${\rm SL}(2,\Z)$ of dimension at most $5$ must be non-degenerate.

\begin{lem}
  \label{nondeglem}
  Let $\rho$ be a degree $r$ non-degenerate admissible representation of ${\rm SL}(2,\Z)$
  with $t=\rho(\ft)$ and $s=\rho(\fs)$. Suppose $\rho' \in \ARep({\rm SL}(2,\Z))$ is
  equivalent to $\rho$  with $t'=\rho'(\ft)$ and $s'=\rho'(\fs)$. Then
  $\rho'(\fg)=U\inv\rho(\fg)U$ for a signed permutation matrix $U\in \GLC{r}$ of
  the permutation $\vs$ on $\{0, \dots, r-1\}$ defined by $t'_{\vs(i)}=t_i\,.$

  If, in addition, $t_0=t'_0$, then $\vs$ defines an isomorphism of fusion rules
  associated to $\rho$ and $\rho'$.
\end{lem}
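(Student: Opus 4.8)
The plan is to exploit the fact that a non-degenerate admissible representation is absolutely irreducible (by \cite{Eh1}), so that Schur's lemma forces any intertwiner between two equivalent copies to be essentially unique. First I would fix an isomorphism $M \in \GLC{r}$ with $\rho'(\fg) = M\inv \rho(\fg) M$ for all $\fg \in {\rm SL}(2,\Z)$; in particular $t' = M\inv t M$. Since both $t$ and $t'$ are diagonal with distinct diagonal entries (non-degeneracy), the equation $tM = Mt'$ read entrywise gives $(t_i - t'_j) M_{ij} = 0$, so $M_{ij} = 0$ unless $t_i = t'_j$. Because the $t_i$ are pairwise distinct and the $t'_j$ are pairwise distinct, for each $j$ there is at most one $i$ with $t_i = t'_j$; moreover such an $i$ exists for every $j$ because $M$ is invertible and each column must be nonzero. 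This defines the permutation $\vs$ by $t'_{\vs(i)} = t_i$ (equivalently $t_i = t'_{\vs(i)}$), and shows $M = P D$ where $P$ is the permutation matrix of $\vs$ and $D$ is an invertible diagonal matrix. Absorbing nothing further yet, set $U = M$; then $U$ is already of the stated ``signed permutation'' shape up to the diagonal scaling $D$.

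To upgrade $D$ to a $\pm 1$ diagonal matrix — the point that makes $U$ a genuine \emph{signed} permutation matrix rather than merely monomial — I would invoke the rationality and structural constraints on admissible modular pairs. By Theorem \ref{t:cong1} and the discussion following \eqref{eq:repC}, both $s$ and $s'$ lie in $\GL_r(\BQ_n)$ and, crucially, $s$ and $s'$ are \emph{symmetric} matrices with $s\ol s{}^t = \Id$ (they are $S/D$ for the respective realizing categories, which satisfy \eqref{eq:orthogonality}), and likewise $s'$. From $s' = U\inv s U = D\inv P\inv s P D$ and the symmetry of both $s$ and $s'$ one gets $D\inv (P\inv s P) D = D (P\inv s P)^t D\inv = D (P\inv s P) D\inv$ (using symmetry of $P\inv s P$), hence $D^2$ commutes with the symmetric matrix $P\inv s P$. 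Because $s$ is the $S$-matrix of an irreducible admissible representation, $P\inv s P$ has no nonzero ``block-diagonal obstruction'': more concretely, the first row of $s$ (the entries $s_{0j} = d_j/D \ne 0$) together with the symmetry and unitarity forces the commutant structure, and one concludes $D^2$ is scalar on each irreducible constituent; since $\rho$ is irreducible, $D = c\cdot(\text{diagonal of }\pm 1\text{-type})$ and rescaling $U$ by the scalar $c$ (which does not affect conjugation) gives a signed permutation matrix. I expect this rescaling-to-$\pm1$ step to be the main obstacle: pinning down that the diagonal part can be taken with entries in $\{\pm 1\}$ rather than arbitrary roots of unity requires using the Galois/sign-function machinery of \eqref{eq:galois2} and Remark \ref{Gidrem}, not just Schur's lemma.

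For the final assertion, assume $t_0 = t'_0$. Since $t'_{\vs(i)} = t_i$ and the $t_j$ are distinct, $t'_0 = t_0 = t'_{\vs(0)}$ forces $\vs(0) = 0$, so $\vs$ fixes the unit label. Now both $\rho$ and $\rho'$ are modular representations realized by modular categories $\CC$ and $\CC'$ (sharing the abstract representation type), and the fusion rules are recovered from $s$ via the Verlinde formula \eqref{Verlinde Formula}: $N_{ij}^k = \sum_a \frac{s_{ia}s_{ja}\ol{s_{ka}}}{s_{0a}}$ (using $s = S/D$ so $D^2$ disappears). Conjugating by $U = PD$ with $D$ a $\pm 1$ diagonal matrix: the entries transform as $s'_{ij} = \pm\, s_{\vs(i)\vs(j)}$, where the signs come in pairs that cancel in the Verlinde ratio (each index appears with its sign twice up to the $s_{0a}$ in the denominator, and $\vs(0)=0$ makes $s_{0a} \mapsto \pm s_{0\vs(a)}$ with a matching sign from $s_{ka}$), so $N'^{\,k}_{ij} = N^{\vs^{-1}(k)}_{\vs^{-1}(i)\,\vs^{-1}(j)}$, i.e. $\vs$ is an isomorphism of the associated fusion rings. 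I would write out this sign-cancellation carefully since it is the one genuinely computational point, but it is routine once the signed-permutation normal form of $U$ is in hand. $\qed$
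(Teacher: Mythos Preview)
Your argument for the first part is correct but more circuitous than needed. Having shown the intertwiner is $M=PD$ with $P$ a permutation matrix and $D$ diagonal, you use symmetry of $s,s'$ to get that $D^2$ commutes with $P^{-1}sP$, and then invoke Schur on the irreducible $\rho$ to conclude $D^2$ is scalar. This works, but the paper's route is shorter and uses neither irreducibility nor any Galois input: writing $s'_{\vs(i)\vs(j)} = (Q_j/Q_i)\,s_{ij}$ and comparing with $s'_{\vs(j)\vs(i)} = (Q_i/Q_j)\,s_{ji}$ gives $(Q_0/Q_j)s_{0j} = (Q_j/Q_0)s_{0j}$; since $s_{0j}\ne 0$ for every $j$, one gets $(Q_j/Q_0)^2=1$ directly. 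So your worry that the ``Galois/sign-function machinery of \eqref{eq:galois2} and Remark \ref{Gidrem}'' is required here is misplaced; symmetry plus nonvanishing of the dimension row already suffices.

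There is, however, a genuine gap in your fusion-rule argument. The signs do \emph{not} all cancel in the Verlinde ratio. With $\vs(0)=0$, $Q'_0=1$, and $s'_{\vs(i)\vs(j)} = Q'_iQ'_j\,s_{ij}$, substituting $a=\vs(b)$ in the Verlinde sum gives
\[
{N'}^{\vs(k)}_{\vs(i)\,\vs(j)} \;=\; \sum_b \frac{Q'_iQ'_b\,s_{ib}\cdot Q'_jQ'_b\,s_{jb}\cdot Q'_kQ'_b\,\overline{s_{kb}}}{Q'_b\,s_{0b}} \;=\; Q'_iQ'_jQ'_k\,N^{\,k}_{ij},
\]
since the four factors of $Q'_b$ contribute $(Q'_b)^2=1$. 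The residual sign $Q'_iQ'_jQ'_k$ does not cancel by any formal pairing. The missing step (which the paper uses) is that both sides are \emph{non-negative integers}: hence whenever $N^k_{ij}\ne 0$ one must have $Q'_iQ'_jQ'_k=+1$, and then ${N'}^{\vs(k)}_{\vs(i)\,\vs(j)} = N^k_{ij}$, so $\vs$ is a fusion-ring isomorphism. Your ``signs come in pairs'' heuristic accounts only for the summation index, not for $i,j,k$.
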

\begin{proof}
  Since $\rho$ and $\rho'$ are equivalent, $t$ and $t'$ have the same
  eigenvalues. By the non-degeneracy of $\rho$, there exists a unique permutation
  $\vs$ on $\{0,\dots, r-1\}$ defined by  $t'_{\vs(i)} = t_i$. Let
  $D_\vs=[\delta_{\vs(i)j}]_{i,j}$ be the permutation matrix of $\vs$. Then
  $\rho''=D_\vs \rho' D_\vs\inv$ is equivalent to $\rho$ and $\rho''(\ft)=t$.
  There exists $Q \in \GLC{r}$ such that $Q\rho''  = \rho Q$. Since $Qt  = t Q$
  and $t$ has distinct eigenvalues, $Q$ is a diagonal matrix. Suppose
  $Q=[\delta_{ij}Q_i]_{i,j \in \Pi_\CC}$. Then
  \begin{equation*}
    s'_{\vs(i)\vs(j)} = \frac{Q_j}{Q_i} s_{ij}\,.
  \end{equation*}

  Both $s$ and $s'$ are symmetric, and so we have
  \begin{equation*}
    \frac{Q_0}{Q_j} s_{0j} = \frac{Q_j}{Q_0} s_{j0}=\frac{Q_j}{Q_0} s_{0j}\,.
  \end{equation*}

  Since $s_{0j} \ne 0$, $\frac{Q_j}{Q_0}=\pm 1$. Let $Q'=\frac{1}{Q_0} Q$ and
  $U=Q'D_\vs$. Then ${Q'}^2=I$, $U$ is a signed permutation matrix of $\vs$, and
  $s'= U\inv s U$. Since there are finitely many signed permutation matrices in
  $\GLC{r}$, the equivalence class of admissible representations of $\rho$ is
  finite.

  If, in addition, $t_0=t'_0$, then $\vs(0)=0$. Let $(s')\inv=[\ol
  s'_{i'j'}]_{i',j' \in \Pi_{\CC'}}$ and $s\inv=[\ol s_{ij}]_{i,,j \in
  \Pi_{\CC}}$.
  By the Verlinde formula,
  \begin{equation*}
    N_{\vs(i)\vs(j)}^{\vs(k)} =
    \sum_{a=0}^{r-1}\frac{s'_{\vs(i)\vs(a)}s'_{\vs(j)\vs(a)} \ol
    s'_{\vs(k)\vs(a)}}{s'_{0\vs(a)}} =Q'_i Q'_j Q'_k
    \sum_{a=0}^{r-1}\frac{s_{ia}s_{ja} \ol s_{ka}}{s_{0a}} = Q'_i Q'_j Q'_k
    N_{ij}^k\,.
  \end{equation*}

  Thus, $Q'_i Q'_j Q'_k=1$ whenever $N_{ij}^k\ne 0$. Moreover, $\vs$ defines an
  isomorphism between the fusion rules of $\CC$ and $\CC'$.
\end{proof}

Let  $\rho: {\rm SL}(2,\Z) \to \GLC{n}$ a representation.
The set of eigenvalues of $\rho(\ft)$ is called the $\ft$-\emph{spectrum} of
$\rho$.  The minimal $N>1$ such that $\rho$ factors over ${\rm SL}(2,\Z/N\Z)$ is called the \textbf{level} of $\rho$.
\begin{lem}
  \label{l:1}

  Let $\CC$ be a modular category of rank $r$,  and $\rho: {\rm SL}(2,\Z) \to \GLC{r}$ a modular representation of $\CC$. Then $\rho$ cannot be isomorphic to a direct sum of two representations with disjoint $\ft$-spectra. In particular, if $\rho$ is non-degenerate, then it is absolutely irreducible.
\end{lem}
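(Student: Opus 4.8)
The plan is to argue by contradiction, the crucial point being that for any modular representation $\rho$ of $\CC$ the matrix $\rho(\ft)$ is \emph{diagonal} while $\rho(\fs)$ is a nonzero scalar multiple of the $S$-matrix, whose row indexed by the unit object, $(d_0,\dots,d_{r-1})$, has \emph{no} zero entry. Concretely, writing $s=\rho(\fs)$ and $t=\rho(\ft)$, the condition $\eta\circ\rho=\orho_\CC$ forces $s=c\,S$ and $t=c'\,T$ for scalars $c,c'\in\BC^\times$ (all four matrices being invertible), so in particular $t=(\delta_{ij}t_i)$ is diagonal.

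Suppose toward a contradiction that $\rho\cong\rho_1\oplus\rho_2$ with $\rho_1,\rho_2$ nonzero and having disjoint $\ft$-spectra $\Sigma_1,\Sigma_2$. First I would show this decomposition is realized by coordinate subspaces. On $\rho_1\oplus\rho_2$ the operator $\ft$ acts with eigenvalues on the $k$-th summand forming exactly $\Sigma_k$; since $\Sigma_1\cap\Sigma_2=\emptyset$, the span of the $\ft$-eigenspaces with eigenvalue in $\Sigma_k$ is precisely the $\rho_k$-summand. Transporting this through the isomorphism $\rho\cong\rho_1\oplus\rho_2$ and using that $t$ is diagonal, I obtain a partition $\Pi_\CC=I_1\sqcup I_2$ with $I_k=\{i\in\Pi_\CC: t_i\in\Sigma_k\}$ (each part nonempty, as $\rho_k\ne 0$), together with $\rho$-invariant coordinate subspaces $V_k=\operatorname{span}\{e_i: i\in I_k\}$ satisfying $\rho|_{V_k}\cong\rho_k$. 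In particular $s=\rho(\fs)$ preserves $V_1$ and $V_2$, so it is block diagonal: $s_{ij}=0$ whenever $i$ and $j$ lie in different blocks. Now the unit object $0$ lies in one of the blocks, say $0\in I_1$; then for every $j\in I_2$ (and $I_2\ne\emptyset$) we would get $0=s_{0j}=c\,S_{0j}=c\,d_j$, contradicting $d_j\ne 0$. This rules out the asserted decomposition.

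For the final assertion I would use that a modular representation of $\CC$ is admissible, hence completely reducible (it factors through a finite quotient $\SL{n}$ of ${\rm SL}(2,\Z)$; cf.\ \thmref{t:cong1}). If $\rho$ is non-degenerate, then $\rho(\ft)$ has $r$ distinct eigenvalues, so in \emph{any} splitting $\rho\cong\rho_1\oplus\rho_2$ the $\ft$-spectra of $\rho_1$ and $\rho_2$ are automatically disjoint; by the part just proved one summand must vanish, so $\rho$ is irreducible, and being a complex representation it is then absolutely irreducible. I do not anticipate a genuine obstacle: the only step requiring a little care is the passage from ``$\rho_1,\rho_2$ have disjoint $\ft$-spectra'' to ``$s$ is block diagonal in the standard basis of $\BC^{\Pi_\CC}$'', and once that is in place the nowhere-vanishing row $(d_j)_j$ of $S$ closes the argument at once.
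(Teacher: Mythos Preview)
Your argument is correct and follows essentially the same route as the paper's proof: both use the disjointness of the $\ft$-spectra to force $s=\rho(\fs)$ to be block diagonal with respect to a partition of the coordinate axes, and then derive a contradiction from the fact that the row of $s$ indexed by the unit object has no zero entries. Your formulation is slightly more direct---you identify the summands with coordinate subspaces via the eigenspace decomposition of the diagonal matrix $t$, whereas the paper introduces an explicit permutation matrix $Q$ and an intertwiner $P$ commuting with the block-diagonal $\ft$---but this is only a cosmetic difference, not a genuinely different approach.
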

\begin{proof}
  Let $s=\rho(\fs)$ and $t=\rho(\ft)$. Then $\frac{s_{0j}}{s_{00}}$ is the
  quantum dimension of the simple object $j$. In particular, every entry of the
  first row of $s$ is non-zero. Thus, for any permutation matrix $Q$, there exists
  a row of $Q\inv s Q$ which has no zero entry.

  Suppose $\rho$ is isomorphic a direct sum of two matrix representations
  $\rho_1, \rho_2$ of ${\rm SL}(2,\Z)$ with disjoint $\ft$-spectra. Since $\rho(\ft)$ has
  finite order, and so are $\rho_i(\ft)$, $i=1,2$. Without loss of generality, we
  can assume $\rho_1(\ft)$ and $\rho_2(\ft)$ are diagonal matrices. There exists a
  permutation matrix $Q$ such that  $Q\inv t Q = \bmtx{\rho_1(\ft) & 0 \\\hline 0
  & \rho_2(\ft)}$. Since the representation $\rho^Q: {\rm SL}(2,\Z) \to \GLC{k}$, $\fs
  \mapsto Q\inv sQ, \ft \mapsto Q\inv t Q$ is also equivalent to $\rho_1\oplus
  \rho_2$, there exists $P \in \GLC{k}$ such that
  \begin{equation*}
    P\bmtx{\rho_1(\ft) & 0 \\ \hline 0 & \rho_2(\ft)} =  \bmtx{\rho_1(\ft) & 0
    \\\hline 0 & \rho_2(\ft)} P \quad\text{ and } \quad Q\inv s Q =
    P\bmtx{\rho_1(\fs) & 0 \\ \hline 0 & \rho_2(\fs)} P\,.
  \end{equation*}

  Since $\rho_1$ and $\rho_2$ have disjoint $\ft$-spectra, $P$ must be of the
  block form
  $\bmtx{P_1 & 0 \\\hline 0 & P_2}$. This implies every row of $Q\inv sQ$ has at
  least one zero entry, a contradiction.
\end{proof}

\begin{cor}
  \label{c:1}
  Suppose $\CC$ is a modular category of rank $r>2$, and $\rho$ is a modular
  representation of $\CC$. Then:
  \begin{enumerate}
  \item $\rho$ cannot be a direct sum of 1-dimensional representations of ${\rm SL}(2,\Z)$.
  \item If $\rho_1$ is a subrepresentation of  degree $r-2$,
  then the $\ft$-spectrum of $\rho_1$ must contain a $120$-th root of unity.
\end{enumerate}
\end{cor}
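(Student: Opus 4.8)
The plan is to treat the two parts separately, the common ingredients being the complete reducibility of admissible ${\rm SL}(2,\Z)$-representations \cite{DLN1}, the fact that a modular representation sends $\fs,\ft$ to nonzero scalar multiples of $S,T$ (see \eqref{eq:repC}), and Lemma \ref{l:1}.

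For (i): if $\rho\cong\bigoplus_i\chi_i$ with all $\chi_i$ one-dimensional, then $\rho(\fs)$ and $\rho(\ft)$ are simultaneously diagonalizable, hence commute; since they are nonzero scalar multiples of $S$ and $T$ respectively, $S$ and $T$ commute, so $(ST)^3=S^3T^3=p^{+}S^2$ gives $S=p^{+}T^{-3}$, a diagonal matrix. But $S_{01}=d_1\neq 0$ is an off-diagonal entry when $r>2$, a contradiction. (The same conclusion follows from Lemma \ref{l:1}: a one-dimensional representation of ${\rm SL}(2,\Z)$ is determined by its value on $\ft$, a $12$-th root of unity, since ${\rm SL}(2,\Z)^{\mathrm{ab}}\cong\BZ/12\BZ$ is generated by the class of $\ft$; thus if the $\chi_i$ are not all equal, grouping them by this value splits $\rho$ into two subrepresentations with disjoint $\ft$-spectra, while if they all agree then $\rho(\ft)$ is scalar, forcing $T=I$ and hence $S=D^2 I$.)

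For (ii): by complete reducibility, $\rho\cong\rho_1\oplus\rho_2$ with $\deg\rho_2=r-(r-2)=2$, and $\rho_2$, being a summand of a semisimple representation, is itself semisimple. By Lemma \ref{l:1}, $\rho_1$ and $\rho_2$ cannot have disjoint $\ft$-spectra, so they share an eigenvalue $\omega$ of $\rho(\ft)$. If $\rho_2$ is a sum of two one-dimensional representations, its $\ft$-eigenvalues are $12$-th roots of unity by the discussion above; if $\rho_2$ is an irreducible two-dimensional representation, then its $\ft$-eigenvalues are roots of unity of order dividing $120$ by the classification of two-dimensional irreducible ${\rm SL}(2,\Z)$-representations (Tuba--Wenzl \cite{TubaWenzl}). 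In either case $\omega$ is a $120$-th root of unity, and $\omega$ lies in the $\ft$-spectrum of $\rho_1$; this proves (ii).

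The main obstacle is this last input: controlling the $\ft$-spectrum of a two-dimensional irreducible ${\rm SL}(2,\Z)$-representation. The quickest route is to quote the explicit matrix models of Tuba--Wenzl and read off the eigenvalues of $\ft$. A self-contained alternative is to note that such a representation factors through $\mathrm{SL}(2,\BZ/n\BZ)$ for some $n$, reduce (after twisting by a linear character of order dividing $12$) to a ``core'' two-dimensional representation, and check via Clifford theory over the congruence kernels $\mathrm{SL}(2,\BZ/p^k\BZ)\to\mathrm{SL}(2,\BZ/p\BZ)$ that this forces the prime powers dividing $n$ to be bounded by those of $120=2^3\cdot 3\cdot 5$ (for instance $\mathrm{SL}(2,p)$ has no nontrivial two-dimensional irreducible representation for $p\ge 7$, and $\mathrm{SL}(2,\BZ/9\BZ)$ has no two-dimensional irreducible beyond those inflated from $\mathrm{SL}(2,\BZ/3\BZ)$). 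This is bookkeeping rather than a conceptual difficulty, but it is the step that requires care.
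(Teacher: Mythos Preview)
Your overall strategy for both parts matches the paper's. For (i) the paper simply cites \cite{Eh1}; your direct argument that $\rho(\fs)$ and $\rho(\ft)$ would commute, forcing $S$ to be diagonal, is correct and more self-contained.

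For (ii) the structure is identical to the paper's: split $\rho\cong\rho_1\oplus\rho_2$ with $\deg\rho_2=2$, invoke Lemma~\ref{l:1} to force a shared $\ft$-eigenvalue, and bound the $\ft$-spectrum of $\rho_2$ by $120$th roots of unity. However, your appeal to Tuba--Wenzl for this last bound is not right: \cite{TubaWenzl} classifies two-dimensional irreducible ${\rm SL}(2,\Z)$-representations as families \emph{parameterized} by the eigenvalues of $\ft$, which can be arbitrary nonzero complex numbers, so one cannot simply ``read off'' a bound of $120$. The bound holds only because $\rho_2$, being a summand of the admissible representation $\rho$, factors through ${\rm SL}(2,\BZ/n\BZ)$ for some $n$. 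The paper then applies the Chinese remainder theorem to write $\rho_2\cong\xi\otimes\phi$ with $\phi$ a linear character (so $\phi(\ft)^{12}=1$) and $\xi$ irreducible of prime-power level, and reads off from \cite[Table A1]{Eh2} (Table~\ref{tableA1} here) that such $\xi$ exist only at levels $2,3,4,5,8$, whence $\rho_2(\ft)^{120}=\id$. Your ``self-contained alternative'' is essentially this argument and is the one actually needed; it is not optional bookkeeping but the substance of the step.
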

\begin{proof}
  The statement (i) was proved in \cite{Eh1} using a simpler version of Lemma \ref{l:1}.

  Suppose $\rho_1$ is a degree $r-2$ subrepresentation of $\rho$ such that
  $\w^{120}\ne 1$ for all eigenvalues $\w$ of $\rho_1(\ft)$. Then there exists a
  2-dimensional representation $\rho_2$ of ${\rm SL}(2,\Z)$ such that $\rho\cong
  \rho_1\oplus \rho_2$.

  If $\rho_2$ is a sum of 1-dimensional subrepresentations, then $\rho_2(\ft)^{12}
  = \id$. If $\rho_2$ is irreducible, then applying the Chinese remainder theorem we obtain $\rho_2 \cong \xi \o \phi$ for some
  linear character $\phi$, and an irreducible representation  $\xi$ of prime power
  level. It follows from \cite[Table A1]{Eh2} that $\rho_2(\ft)^{120}=\id$.
  Thus,  for both cases, $\rho_1$ and $\rho_2$ have disjoint $\ft$-spectra.
  However, this contradicts Lemma \ref{l:1}.
\end{proof}

For any representation $\rho$ of ${\rm SL}(2,\Z)$, we say that $\rho$ is \textbf{even}
(resp. \textbf{odd}) if $\rho(\fs)^2=\id$ (resp. $\rho(\fs)^2=-\id$).  We denote the set of primitive $q$-th roots of unity by $\mu_q$, the set of all $q$-th roots of unity by $\ol \mu_q$,
 and  $\mu_{q*} = \bigcup\limits_{n \in \BN} \mu_{qn}$.

\begin{remark}
  \label{r1} If $\rho$ is even, then the linear representation $\det \rho$ of ${\rm SL}(2,\Z)$ is also even, and so $\det\rho(\ft) \in \ol\mu_6$.  In general,
a representation of ${\rm SL}(2,\Z)$ may neither even nor odd. However, if $\CC$ is a
self-dual modular category, then $\CC$ admits an \emph{even} modular
representation given by the normalized modular pair $(\frac{1}{D} S, \frac{1}{\zeta} T)$  for any 3-rd root $\zeta$ of $\frac{D}{p^-}$.
  Let $\rho$ be a modular representation of $\CC$. Then for any
  linear character $\chi$ of ${\rm SL}(2,\Z)$, there exists a modular representation $\rho'
  \cong \rho \o\chi$ as representations of ${\rm SL}(2,\Z)$. In addition, if $\rho$ and
  $\chi$ are even, then so is $\rho'$.
\end{remark}

\begin{lem}\label{l:4.25}
  Suppose $\CC$ is a self-dual modular category of rank $r$, and $\rho$ is an
  even modular representation of $\CC$. If  $\rho \cong \phi_1 \oplus (\phi_2\o
  \xi)$ for some degree 1 representations $\phi_1$, $\phi_2$ and a degree $r-1$
  non-degenerate irreducible representation  $\xi$ of ${\rm SL}(2,\Z)$ with odd level, then
  $\phi_1$, $\phi_2$ and $\xi$ are all even.
\end{lem}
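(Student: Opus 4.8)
The plan is to pin down the common parity of the three constituents by restricting the identity $\rho(\fs)^2=\id$ to the summands of $\rho\cong\phi_1\oplus(\phi_2\o\xi)$, and then to use Lemma \ref{l:1} together with the odd-level hypothesis on $\xi$ to exclude the ``odd'' alternative.

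First I would do the elementary parity bookkeeping. Since $\xi$ is irreducible and $\fs^4=1$, Schur's lemma gives $\xi(\fs^2)=\epsilon_\xi\,\id$ with $\epsilon_\xi\in\{\pm1\}$, so $(\phi_2\o\xi)(\fs^2)=\phi_2(\fs)^2\epsilon_\xi\,\id$, while $\phi_1(\fs^2)=\phi_1(\fs)^2$. As $\rho$ is even, $\rho(\fs^2)=\id$; decomposing this along $\rho\cong\phi_1\oplus(\phi_2\o\xi)$ yields
\begin{equation*}
\phi_1(\fs)^2=1\qquad\text{and}\qquad \phi_2(\fs)^2\,\epsilon_\xi=1 .
\end{equation*}
The first equation says $\phi_1$ is even. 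Since $\epsilon_\xi=\pm1$, the second shows $\phi_2$ and $\xi$ have the same parity, so it suffices to prove $\phi_2$ is even: then $\epsilon_\xi=\phi_2(\fs)^{-2}=1$, so $\xi(\fs^2)=\id$ and $\xi$ is even as well.

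Next I would invoke Lemma \ref{l:1}. As $\rho$ is a modular representation of $\CC$ and $\rho\cong\phi_1\oplus(\phi_2\o\xi)$, these two summands cannot have disjoint $\ft$-spectra; since the $\ft$-spectrum of $\phi_1$ is just $\{\phi_1(\ft)\}$, there is an eigenvalue $\lambda$ of $\xi(\ft)$ with $\phi_1(\ft)=\phi_2(\ft)\lambda$. Because $\xi$ has odd level $n$, it factors through $\SL{n}$, so $\xi(\ft)^n=\id$ and hence $\lambda^n=1$ with $n$ odd; on the other hand $\lambda=\phi_1(\ft)\phi_2(\ft)^{-1}$ is a ratio of $12$-th roots of unity, so $\lambda^{12}=1$. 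Thus $\ord(\lambda)\mid\gcd(12,n)$, and $\gcd(12,n)\in\{1,3\}$ as $n$ is odd, so $\lambda^3=1$ and in particular $\lambda^6=1$. Finally, a linear character $\chi$ of ${\rm SL}(2,\Z)$ satisfies $\chi(\fs)^2=\chi(\ft)^{-6}$ — equivalently, $\chi$ is even iff $\chi(\ft)\in\ol\mu_6$ — since in ${\rm SL}(2,\Z)^{\mathrm{ab}}\cong\Z/12\Z$ the class of $\ft$ is a generator and the class of $\fs$ is $(-3)$ times it (cf.\ Remark \ref{r1}). So $\phi_1(\fs)^2=1$ gives $\phi_1(\ft)^6=1$, whence $\phi_2(\ft)^6=\phi_1(\ft)^6\lambda^{-6}=1$ and therefore $\phi_2(\fs)^2=\phi_2(\ft)^{-6}=1$; i.e.\ $\phi_2$ is even, completing the argument.

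The step I expect to be the crux is the appeal to Lemma \ref{l:1}: a purely $\det$-theoretic argument only yields $\epsilon_\xi^{\,r-1}=1$, which is vacuous when $r$ is odd, so one genuinely needs the ``no disjoint $\ft$-spectra'' constraint to transport parity information from the one-dimensional piece $\phi_1$ across to $\xi$, and the odd-level hypothesis is precisely what makes the linking eigenvalue $\lambda$ a cube root of unity and hence ``even-compatible''. The only other point requiring care is the relation tying a linear character's value on $\fs$ to its value on $\ft$.
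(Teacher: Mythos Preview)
Your proof is correct and follows essentially the same route as the paper's: both use the evenness of $\rho$ to see $\phi_1$ is even and that $\phi_2$ and $\xi$ share a parity, then invoke Lemma~\ref{l:1} to find an eigenvalue $\lambda=\phi_1(\ft)\phi_2(\ft)^{-1}$ of $\xi(\ft)$, and finally combine the odd-level condition with $\lambda^{12}=1$ to force $\lambda^6=1$ and hence $\phi_2(\ft)^6=1$. You simply spell out a few steps (Schur's lemma for $\epsilon_\xi$, the $\gcd(12,n)$ computation, and the relation $\chi(\fs)=\chi(\ft)^{-3}$ in ${\rm SL}(2,\Z)^{\mathrm{ab}}$) that the paper leaves implicit.
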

\begin{proof}
  Let $\w_1 = \phi_1(\ft)$ and $\w_2=\phi_2(\ft)$. Note that $\phi_i(\fs) =
  \w_i^{-3}$ for all $i=1,2$, and $\w_i^{12}=1$. Since $\rho$ is even,  $\phi_1$
  and $\phi_2 \o \xi$ are even. In particular, $\w_1^6=1$.
  Since $\rho$ is reducible and $\xi$ is non-degenerate, by Lemma \ref{l:1},
  $\w_1 \w_2\inv$ must be in the spectrum of $\xi(\ft)$ . Therefore, $\w_1
  \w_2\inv$ is of odd order, and hence $\w_2^6=1$. Therefore,  $\phi_2$ is  even.
  Since $\phi_2\o \xi$ is even, $\xi$ is also even.
\end{proof}

\begin{remark}
\label{l:2}
  If $\rho$ is a modular representation of a modular category  $\CC$, then the order of its $T$-matrix  is equal to the \emph{projective order} of $\rho(\ft)$, i.e. the small positive integer $N$ such that $\rho(\ft)^N$ is a scalar multiple of the identity.
\end{remark}

\begin{lem} \label{l:SUequivalence}
Let $\CC$ be a fusion category such that $G(\CC)$ is trivial and $\KK_0(\CC) \o_\BZ \BZ_N$ is isomorphic to $\KK_0(SU(N)_k)$ for some integer $k$ relatively prime to $N$. Then $\CC$ is monoidally equivalent to a Galois conjugate of $SU(N)_{k}/\BZ_N$.\footnote{See Section \ref{Applications: Rank 5 Classification} for notation.}
\end{lem}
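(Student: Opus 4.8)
The plan is to reduce the statement to the classification of fusion categories with $SU(N)_k$ fusion rules, by first enlarging $\CC$ to such a category and then recovering $\CC$ as the trivial component of its universal grading. I would begin by rewriting the hypothesis: setting $\mathcal D:=\CC\boxtimes\mathrm{Vec}_{\BZ_N}$, one has $\KK_0(\mathcal D)=\KK_0(\CC)\o_\BZ\BZ[\BZ_N]$, so the displayed isomorphism $\KK_0(\CC)\o_\BZ\BZ_N\cong\KK_0(SU(N)_k)$ says precisely that $\mathcal D$ is a fusion category with the fusion rules of $SU(N)_k$. On the $SU(N)_k$ side, since $\gcd(k,N)=1$ the group of invertible objects of $SU(N)_k$ is cyclic of order $N$ and meets every component of the $N$-ality grading; hence $SU(N)_k$ is a faithfully $\BZ_N$-graded extension of its adjoint subcategory $SU(N)_k/\BZ_N$, each graded piece of which contains an invertible object, so $\KK_0(SU(N)_k)\cong\KK_0(SU(N)_k/\BZ_N)\o_\BZ\BZ[\BZ_N]$ and $G(SU(N)_k)\cong\BZ_N$.

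Next I would apply the classification of fusion categories with $SU(N)_k$ fusion rules --- Kazhdan--Wenzl for $\mathfrak{sl}_N$ with $N\ge3$, and the Temperley--Lieb/Ostrik description for $N=2$ (with $k\ge2$) --- together with Ocneanu rigidity, to conclude that $\mathcal D$ is monoidally equivalent to a category $\mathcal D'$ obtained from $SU(N)_k$ by applying a Galois automorphism $\sigma\in\Aut(\ol\BQ)$ and possibly twisting the associativity constraint by a $3$-cocycle pulled back along the $N$-ality grading. The universal grading group of a fusion category depends only on its fusion ring, so $U(\mathcal D)\cong U(SU(N)_k)=G(SU(N)_k)=\BZ_N$; since the universal grading group of a Deligne product is the product of the universal grading groups, this gives $U(\CC)\times\BZ_N\cong\BZ_N$, hence $U(\CC)=1$ (the same comparison with $G$ in place of $U$ shows that the hypothesis $G(\CC)=1$ is in fact automatic). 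Therefore the $\BZ_N$-grading on $\mathcal D$ induced by the $\mathrm{Vec}_{\BZ_N}$-factor is the universal grading, with trivial component $\CC$. A monoidal equivalence preserves the universal grading, and a $3$-cocycle pulled back along a grading restricts trivially to the trivial component; hence $\CC$ is monoidally equivalent to the trivial component of the universal grading of $SU(N)_k^{\sigma}$, i.e.\ to $SU(N)_k^{\sigma}/\BZ_N$, a Galois conjugate of $SU(N)_k/\BZ_N$.

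The step I expect to be the main obstacle is the precise invocation of the $\mathfrak{sl}_N$ fusion-rule classification: one must know that, once Galois conjugation and a pulled-back associator twist are both allowed, the classification is exhaustive, and in particular that no sporadic fusion category with $SU(N)_k$ fusion rules arises for the values $N\in\{2,3\}$ and $k\ge2$ relevant to the rank $5$ application. A subsidiary point requiring care is checking that the $\BZ_N$-grading of $\mathcal D$ is carried by the equivalence onto the $N$-ality grading of $SU(N)_k^{\sigma}$; this rests on the canonicity of the universal grading and on $U(\CC)=1$, and is where the hypothesis $G(\CC)=1$ genuinely enters.
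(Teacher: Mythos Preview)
Your proposal is correct and follows essentially the same approach as the paper: form the Deligne product $\CC\boxtimes\mathrm{Vec}_{\BZ_N}$, apply the Kazhdan--Wenzl classification of fusion categories with $SU(N)_k$ fusion rules, and then recover $\CC$ as the trivial (adjoint) component of the resulting $\BZ_N$-grading. The paper is terser---it states the Kazhdan--Wenzl output directly as $\DD\boxtimes\mathrm{Vec}(\BZ_N,\omega)$ with $\DD$ a Galois conjugate of $SU(N)_k/\BZ_N$, and then simply asserts that the adjoint subcategories match---whereas you justify the grading step more carefully via the universal grading and the computation $U(\CC)=1$; but the substance is the same.
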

\begin{proof}
 Let $\mcS$ be a rank $N$ fusion category with fusion rules $\BZ_N$ (or $\Vec(\BZ_N)$).
 Now, we have
 $$
 \KK_0(\CC \boxtimes \mcS) \cong \KK_0(\CC) \ot \KK_0(\mcS) \cong \KK_0(SU(N)_k)
 $$
 as based rings.
 By the classification in \cite{KazWen}, $\CC \boxtimes \mcS$ is monoidally equivalent to $\DD \boxtimes \Vec(\BZ_N, \w)$ for some $3$-cocycle $\w$ of $\BZ_N$ and  Galois conjugate $\DD$ of $SU(N)_k/\BZ_N$ (\emph{i.e.} a choice of a root of unity).   As these categories are $\BZ_N$-graded and the adjoint subcategories ($\CC$ and $\DD$ respectively) are the $0$-graded components we have that $\CC$ is monoidally equivalent to $\DD$.
\end{proof}
Recall from Section \ref{Galois Theory} that $\BK_0$ is the extension of $\BQ$ generated by the dimensions of the simple objects in $\CC$.
\begin{thm}
  \label{t:irr}
  Let $\CC$ be a modular category such that
  $|\Pi_\CC|=[\BK_0:\BQ]=p$ is a prime.  Then:
\begin{enumerate}
\item  Every modular representation of $\CC$ is non-degenerate and hence absolutely irreducible.
\item $q=2p+1$ is a prime.
\item $\FSexp(\CC)=q$.
\item The underlying fusion category of $\CC$ is monoidally equivalent to a Galois conjugate of $SU(2)_{2p-1}/\BZ_2$.
\end{enumerate}
\end{thm}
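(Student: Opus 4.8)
The plan is to first squeeze the structure of $\Gal(\CC)$ out of the hypothesis, deduce (i) by an $\mathrm{SL}(2,\BZ)$-representation-theoretic argument, then use a Chinese-Remainder decomposition of the level of the modular representation to force $\FSexp(\CC)=q=2p+1$ and that $q$ is prime, and finally identify the Grothendieck ring via the representation theory of $\mathrm{SL}(2,\BZ/q\BZ)$ and \lemmaref{l:SUequivalence}.

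First, by \lemmaref{l:orbit}, $|\langle 0\rangle|=[\BK_0:\BQ]=p=|\Pi_\CC|$, so $\Gal(\CC)$ acts transitively on $\Pi_\CC$; an abelian transitive subgroup of $\mathfrak S_p$ for $p$ prime is regular, so $\Gal(\CC)\cong\BZ/p\BZ$ and is generated by a $p$-cycle $\hat\sigma$. Since $0$ is self-dual and $\langle 0\rangle=\Pi_\CC$, \lemmaref{l:orbit} also forces every simple object of $\CC$ to be self-dual, so $S$ is real. For (i), take a modular representation $\rho$ with normalized modular pair $(s,t)$, $n=\ord(t)$, and lift a generator $\sigma$ of $\Gal(\CC)$ to $\Gal(\BQ_n/\BQ)$; by \thmref{t:Galois}(b), $\sigma^2(t_i)=t_{\hat\sigma(i)}$, so the partition of $\Pi_\CC$ into level sets of $i\mapsto t_i$ is $\hat\sigma$-invariant. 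A $\hat\sigma$-invariant partition for a $p$-cycle $\hat\sigma$ with $p$ prime is either all singletons or the single block $\Pi_\CC$; in the latter case $\rho(\ft)$ is scalar, and since the normal closure of $\ft$ in ${\rm SL}(2,\BZ)$ is all of ${\rm SL}(2,\BZ)$ this makes $\rho$ scalar-valued — impossible as $\dim\rho=p\ge 2$ and $\rho(\fs)\propto S$ (first row $(1,d_1,\dots,d_{p-1})$, no zero entries) is not scalar. Hence the $t_i$ are distinct, every modular representation of $\CC$ is non-degenerate, and by \lemmaref{l:1} absolutely irreducible.

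For (ii) and (iii), keep $\rho$ as above, of dimension $p$ and absolutely irreducible, and set $n=\ord\rho(\ft)$, so $\rho$ factors through ${\rm SL}(2,\BZ/n\BZ)$ by \cite{DLN1}. Writing $n=\prod_i p_i^{a_i}$, the Chinese Remainder Theorem gives ${\rm SL}(2,\BZ/n\BZ)\cong\prod_i{\rm SL}(2,\BZ/p_i^{a_i}\BZ)$, and absolute irreducibility forces $\rho\cong\rho_1\boxtimes\cdots\boxtimes\rho_k$ with $\prod_i\dim\rho_i=p$; as $p$ is prime, exactly one factor $\rho_j$ has dimension $p$ and the others are $1$-dimensional. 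Pulling back to ${\rm SL}(2,\BZ)$, $\rho\cong\chi\otimes\tilde\rho_j$ with $\tilde\rho_j$ the pullback of $\rho_j$ and $\chi$ a product of linear characters of ${\rm SL}(2,\BZ)$ (so $\chi^{12}=1$); by Remark~\ref{r1}, $\tilde\rho_j\cong\rho\otimes\chi^{-1}$ is again a modular representation of $\CC$. Since $\tilde\rho_j$ factors through ${\rm SL}(2,\BZ/p_j^{a_j}\BZ)$ and is nontrivial of dimension $p\ge 2$, $\tilde\rho_j(\ft)$ has order, hence projective order, a nontrivial power of $p_j$; by Remark~\ref{l:2}, $N:=\FSexp(\CC)=\ord(T)$ equals that projective order, so $N$ is a power of $p_j$. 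Now $\BF_S=\BK_0\subseteq\BF_T=\BQ_N$ with $[\BF_S:\BQ]=p$ and $\BF_S$ real, and $\Gal(\BQ_N/\BF_S)$ is an elementary abelian $2$-group (condition (vi) of admissible modular data, cf.\ \lemmaref{l:2group}); combining these with the fact that low-dimensional irreducible representations of ${\rm SL}(2,\BZ/p_j^c\BZ)$ have level $p_j$ forces $N=p_j$, $p\mid p_j-1$, $p_j\ge 5$, and $\tilde\rho_j$ an absolutely irreducible ${\rm SL}(2,\BZ/p_j\BZ)$-representation of dimension $p$. Since the irreducible dimensions of ${\rm SL}(2,\BZ/p_j\BZ)$ are $1,\,p_j,\,p_j\pm1,\,(p_j\pm1)/2$, and $2\le p<p_j$ with $p\mid p_j-1$ (the options $p=p_j-1$ and $p=(p_j+1)/2$ both force $p=2$, $p_j=3$, contradicting $p_j\ge 5$), we must have $p=(p_j-1)/2$, i.e.\ $p_j=2p+1=q$; hence $\FSexp(\CC)=q$ and $q=2p+1$ is prime. (Part (ii) by itself also follows directly from \propref{201301221709} applied to $\Gal(\BK_0/\BQ)\cong\BZ/p\BZ$, and the case $p=2$ is covered by the known rank-$2$ classification.)

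For (iv): the hypothesis of \lemmaref{integrallemma}(iii) is vacuous here (there is no $j\notin\langle 0\rangle=\Pi_\CC$), so some Galois conjugate of $\CC$ is pseudo-unitary; since Galois conjugation preserves the Grothendieck ring, we may assume $d_i=\FPdim(V_i)>0$ while identifying $\KK_0(\CC)$. By the previous step, $\tilde\rho_j$ is the pullback of one of the two absolutely irreducible representations of ${\rm SL}(2,\BZ/q\BZ)$ of dimension $(q-1)/2$; these are Galois conjugate to one another, and both are realized by $SU(2)_{q-2}=SU(2)_{2p-1}$ folded by its $\BZ_2$ of invertible objects (and the Galois conjugates thereof). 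Since the $S$-matrix of $\CC$ is, up to a scalar, the $\fs$-image of such a representation, the Verlinde formula forces $\KK_0(\CC)\otimes_\BZ\BZ_2\cong\KK_0(SU(2)_{2p-1})$; the Frobenius--Perron dimensions of $\CC$ are then those of $SU(2)_{2p-1}/\BZ_2$, none of which equals $1$ outside the unit, so $G(\CC)$ is trivial, and \lemmaref{l:SUequivalence} (with $N=2$, $k=2p-1$) yields that the underlying fusion category of $\CC$ is monoidally equivalent to a Galois conjugate of $SU(2)_{2p-1}/\BZ_2$. The main obstacle is exactly this last identification — showing that an abstract absolutely irreducible dimension-$(q-1)/2$ representation of ${\rm SL}(2,\BZ/q\BZ)$ underlying a modular category carries precisely the $\BZ_2$-folded $SU(2)_{2p-1}$ modular data, so that the fusion rules are completely pinned down — which rests on the explicit representation theory of ${\rm SL}(2,\BZ/q\BZ)$ in dimension $(q-1)/2$ together with the admissibility constraints on $(S,T)$.
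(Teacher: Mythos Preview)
Your overall architecture matches the paper's: Galois-theoretic transitivity gives $\Gal(\CC)\cong\BZ/p\BZ$ and self-duality, Galois symmetry on $t$ forces non-degeneracy, a Chinese Remainder factorisation isolates a single prime $q=2p+1$, and \lemmaref{l:SUequivalence} finishes (iv). Two points deserve comment.

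For (ii)--(iii) your route is slightly different from the paper's (the paper first applies \propref{201301221709} to obtain $q=2p+1$ prime and $q\mid n\mid 24q$, and only then invokes the Chinese Remainder Theorem), but your order also works once the step ``$N=p_j$'' is argued correctly. The phrase ``low-dimensional irreducible representations of ${\rm SL}(2,\BZ/p_j^{c}\BZ)$ have level $p_j$'' is not the right justification and is not what you actually need. The clean argument is purely field-theoretic: with $N=p_j^{c}$ you have $\Gal(\BQ_N/\BQ)$ cyclic (for $p_j$ odd), so its elementary $2$-subgroup $\Gal(\BQ_N/\BK_0)$ has order at most $2$, giving $\varphi(p_j^{c})=2p$; for $c\ge 2$ this forces $p_j=p$ and $p=3$, hence for $p>3$ one gets $c=1$ and $p_j=2p+1$. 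Like the paper, you must then dispatch $p=2,3$ by citing the rank $\le 3$ classification (the paper cites \cite[pp.~375--377]{RSW}); you only mention $p=2$.

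For (iv) you have correctly located the crux but not closed it. Knowing that $\tilde\rho_j$ is abstractly isomorphic to one of the two $(q-1)/2$-dimensional irreducibles of ${\rm SL}(2,\BZ/q\BZ)$ does not by itself pin down $K_0(\CC)$: by \lemmaref{nondeglem} the two $\fs$-images differ by conjugation by a \emph{signed permutation}, and the Verlinde formula only yields an isomorphism of fusion rules if that permutation fixes the unit label $0$. The paper's device is exactly this: after passing to a pseudo-unitary Galois conjugate via \lemmaref{integrallemma}(iii), the $0$th column of $s$ is (projectively) the Frobenius--Perron dimension vector, which is the \emph{unique} projectively positive column of any modular $S$-matrix; the same holds for the $SU(2)_{2p-1}/\BZ_2$ realisation. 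Hence the signed permutation in \lemmaref{nondeglem} must send $0$ to $0$, and the last clause of that lemma then gives the fusion-rule isomorphism $K_0(\CC)\cong K_0(SU(2)_{2p-1}/\BZ_2)$. With $G(\CC)$ trivial, \lemmaref{l:SUequivalence} applies as you indicate.
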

\begin{proof}
The cases $p=2,3$ follow from the classification in \cite[pp. 375--377]{RSW}.  We may assume $p>3$.

Let $\rho$ be a modular representation of $\CC$, and set $s=\rho(\fs)$, $t=\rho(\ft)$ and $n=\ord(t)$.
  By Lemma \ref{l:orbit}, $|\langle  0 \rangle|= [\BK_0:\BQ] = |\Pi_\CC|$. Thus,
  $\BK_\CC=\BK_0$ and so $|\Gal(\CC)| = p$. Thus, $\Gal(\CC)\cong
  \BZ_p$.  Let $\s \in \GalQ{n}$ such that
  $\s|_{\BK_\CC}$ is a generator of $\Gal(\CC)$, and hence $\hs = (0, \hs(0),
  \hs^2(0), \dots, \hs^{p-1}(0))$.
  By Theorem \ref{t:Galois}, $t_{\hs^i(0)}= \s^{2i}(t_0)$. Thus, $\BQ_n = \BQ(t_0)$.  Suppose $t_{\hs^i(0)} = t_{\hs^j(0)}$ for
  some
  non-negative integers $ i < j \le p-1$. Then $\s^{2(j-i)}(t_0)=t_0$ and so $\s^{2(j-i)}=\id$. This implies $\hs^{2l}=\id$ for some positive integer $l \le p-1$, and hence $p \mid 2 l$, a contradiction. Therefore, $t_{\hs^i(0)} \ne t_{\hs^j(0)}$ for
  all non-negative integers $ i < j \le p-1$, and hence $\rho$ is non-degenerate. By Lemma \ref{l:1}, $\rho$ is absolutely irreducible.

  Note that $(\BF_S, \BF_t)$ is a modularly admissible, and $\BF_S=\BK_\CC$ and $\BF_t = \BQ_n$.
  Since
  $[\BK_0: \BQ]=|\langle 0 \rangle| = |\Pi_\CC|$, $\BK_0 = \BK_\CC$.
  By Proposition \ref{201301221709} (since $p>3$) we have $q=2p+1$ is a prime and $q \mid n \mid 24 q$.

Since $(q,24)=1$, by the Chinese Remainder Theorem, $\rho \cong \chi \o R$ for some irreducible representations $\chi$ and $R$ of levels $n/q$ and $q$ respectively. Since $q \mid n$ and $12 \nmid q$, $R$ is not linear. Thus, the prime degree $p$ of $\rho$ implies that $\deg R = p$ and $\deg \chi =1$.  Since $\rho(\ft)^q = \chi(\ft)^q \o \id$,  $\FSexp(\CC) \mid q$ by Remark \ref{l:2}, and hence $\FSexp(\CC)=q$.

Since $\FSexp(\CC)=q$ is odd,  there exists a modular representation $\rho$ of $\CC$ with level $q$ by \cite[Lem. 2.2]{DLN1}.
There is a dual pair of such irreducible representations of $\SL{q}$.
Realizations can be obtained from
  the modular data for $\DD=SU(2)_{2p-1}/\BZ_2$ (see e.g. \cite{BKi}):
  \begin{equation}
    S_{i,j}=\frac{\sin\left(\frac{(2i+1)(2j+1)\pi}{q}\right)}{\sin\left(\frac{\pi}{
    q}\right)}, \quad \theta_j=e^{\frac{2\pi i(j^2+j)}{q}}
  \end{equation}

  where $0\leq j\leq (p-1)=\frac{q-3}{2}$.  Since the $\theta_j$ are distinct and the
  $T$-matrix has order $q$, we can normalize $(S_\DD,T_\DD)$ to a pseudo-unitary
  modular pair $(\tilde{s},\tilde{t})$ corresponding to a degree $p$ and level $q$ irreducible representation of ${\rm SL}(2,\Z)$. Complex conjugation gives the other
  inequivalent such representation, and both have the first column a multiple of the Frobenius-Perron dimension.

  By Lemma \ref{integrallemma}(iii) we may replace the modular data $(S_\CC,T_\CC)$
  by an admissible pseudo-unitary modular data $(S^\prime,T^\prime)$.  After normalizing
  and taking the complex conjugates (if necessary) we can assume that the resulting pair
  $(s^\prime,t^\prime)$ is conjugated to $(\tilde{s},\tilde{t})$ by a signed permutation $\vs$,
  by Lemma \ref{nondeglem}. The first row/column of both
  $s^\prime$ and $\tilde{s}$ are projectively positive.  The first
  column of $s^\prime$ is mapped to the first column of $\tilde{s}$ under $\vs$.  In
  particular $\vs$ fixes the label $0$ (as the Frobenius-Perron dimension is the unique projectively positive column of any $S$-matrix) so the last part of Lemma \ref{nondeglem}
  implies that the fusion rules coincide.  Now, statement (iv) follows from Lemma \ref{l:SUequivalence} as there are exactly $N$ invertible objects in $SU(N)_k$, labeled by weights at the corners of the Weyl alcove.
\end{proof}

\begin{lem}
  \label{l:inadp}
  Let $p>3$ be a prime. Then the unique degree $p$ irreducible representation $\psi$
  of $\SL{p}$ is not admissible.
\end{lem}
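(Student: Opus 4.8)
The plan is to suppose $\psi$ is admissible, so that there is a modular category $\CC$ of rank $p$ and a modular representation $\rho$ of $\CC$ with $\rho\cong\psi$, and to derive a contradiction from the shape of the $S$-matrix. First I would record the structure forced by identifying $\psi$ with the Steinberg representation (the unique degree $p$ irreducible of $\SL{p}$, realised on $W=\{f\colon\mathbb P^1(\mathbb F_p)\to\BC\mid \sum f=0\}$). Since $\fs^2=-I$ in ${\rm SL}(2,\Z)$ and the central element $-I$ of $\SL{p}$ fixes $\mathbb P^1(\mathbb F_p)$ pointwise, $\psi(\fs)^2=\mathrm{id}$, hence $\rho$ is even; writing $\rho(\fs)$ as a scalar multiple of $S$ and using $S^2=p^+p^-C$ with $p^+p^-=D^2$ forces the charge conjugation matrix $C$ to be scalar, so $C=\mathrm{id}$, $\CC$ is self-dual, $S$ is real, and (after the normalisation $\zeta^3/p^+=\pm1/D$) one has $\rho(\fs)=\pm S/D$. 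Next, $\psi(\ft)$ has the $p$ distinct $p$-th roots of unity as eigenvalues, so by Remark~\ref{l:2} $\ord(T)=p$; thus $\{\theta_i\}_{i\in\Pi_\CC}$ is exactly the set of $p$-th roots of unity, $\theta_0=1$, and by Theorem~\ref{t:cong1} every quantum dimension $d_i=S_{0i}$ lies in $\BF_S\subseteq\BF_T=\BQ_p$. Finally, $\SL{p}$ is perfect for $p>3$, so $\det\psi$ is trivial; writing $\rho(\ft)=cT$ gives $1=\det\rho(\ft)=c^p\prod_i\theta_i=c^p$, so $c=\zeta_p^{\,a}$ is a $p$-th root of unity.

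The core computation is an explicit description of $\psi(\fs)$ in a basis diagonalising $\psi(\ft)$. On $U=\langle\ft\rangle$ the module $W$ is the regular representation of $\BZ/p\BZ$: the vectors $v_x:=e_x-e_\infty$ ($x\in\mathbb F_p$) are cyclically permuted by $\ft$ while $e_\infty$ is fixed, so the finite Fourier transform gives eigenvectors $w_0,\dots,w_{p-1}$ with $\psi(\ft)w_k=\zeta_p^{\,k}w_k$. A short computation using $\fs\colon z\mapsto-1/z$ on $\mathbb P^1(\mathbb F_p)$ then yields: the row and column of $\psi(\fs)$ through the $\ft$-fixed vector $w_0$ have entries $-1/p$ on the diagonal and $-1/p$, resp.\ $-(p+1)/p$, off it (the asymmetry reflecting the induced-from-the-Borel structure), while the remaining $(p-1)\times(p-1)$ block has entries $\tfrac1p\,\mathrm{Kl}(k,l)$ with $\mathrm{Kl}(k,l)=\sum_{y\in\mathbb F_p^\times}\zeta_p^{\,ky^{-1}+ly}$. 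Now match $\rho$ with $\psi$: since $\rho(\ft)_{00}=c=\zeta_p^{\,a}$, the unit object $V_0$ corresponds to $w_a$. The only freedom left after diagonalising $\psi(\ft)$ is to rescale the $w_k$ by a diagonal matrix (the permutation part being pinned down by the $\ft$-spectrum as in Lemma~\ref{nondeglem}); requiring $\rho(\fs)$ to be symmetric forces, for each $k\neq0$, a factor with square $\,A_{k0}/A_{0k}=p+1\,$ between $w_0$ and $w_k$. Reading off $S=\pm D\,\rho(\fs)$ and imposing $S_{00}=1$, I would conclude: when $a=0$ one gets $D=p$ and $d_i=\pm\sqrt{p+1}$ for all $i\neq0$; when $a\neq0$ one gets $D=p/|\mathrm{Kl}(1,a^2)|$ and the object $V_{j_0}$ corresponding to $w_0$ has $d_{j_0}=\pm\sqrt{p+1}/\mathrm{Kl}(1,a^2)$.

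In either case $\sqrt{p+1}$ is a $\BQ_p$-multiple of the nonzero cyclotomic integer $\mathrm{Kl}(1,a^2)$ (nonzero since $\mathrm{Kl}(1,m)\equiv-1\not\equiv0\pmod{(1-\zeta_p)}$), hence $\sqrt{p+1}\in\BF_S\subseteq\BQ_p$. But the unique quadratic subfield of $\BQ_p$ is $\BQ(\sqrt{p^*})$ with $p^*=(-1)^{(p-1)/2}p$, and $\BQ(\sqrt{p+1})$ differs from it: $p+1$ is not a perfect square for $p>3$ (else $p=(\sqrt{p+1}-1)(\sqrt{p+1}+1)$ would be composite), and $\pm p(p+1)$, a product of coprime consecutive integers, is never a perfect square. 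Thus $\sqrt{p+1}\notin\BQ_p$, a contradiction, and $\psi$ is not admissible.

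The step I expect to be the main obstacle is the bookkeeping around the normalisation constant $c=\zeta_p^{\,a}$: when $a\neq0$ the unit object is \emph{not} the $\ft$-fixed eigenvector of $\rho(\ft)$, so the relevant $S$-entries are not the clean DFT-type entries of the distinguished row but are twisted by the Kloosterman value $\mathrm{Kl}(1,a^2)$, and one must track carefully which eigenvector of $\psi(\ft)$ is the unit. Everything else — the Fourier evaluation of $\psi(\fs)$, the identity $\sum_{m\neq0}\mathrm{Kl}(1,m)^2=p(p-1)$ used to double-check orthogonality of $S$, and the elementary fact $\sqrt{p+1}\notin\BQ_p$ — is routine.
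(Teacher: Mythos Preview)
Your argument is correct and lands on the same contradiction as the paper: $\sqrt{p+1}\notin\BQ_p$ for $p>3$. The route, however, is substantially longer than necessary. The paper simply quotes a \emph{symmetric} model of $\psi$ from the outset, with $\psi(\fs)_{00}=-1/p$, $\psi(\fs)_{0k}=\psi(\fs)_{k0}=\sqrt{p+1}/p$ for $0<k<p$, and the Kloosterman block for $0<j,k<p$; it then invokes Lemma~\ref{nondeglem}: since $\psi$ is non-degenerate, any modular representation $\rho\cong\psi$ satisfies $\rho=U^{-1}\psi\,U$ for a \emph{signed permutation} matrix $U$, so $\BF_{\rho(\fs)}=\BF_{\psi(\fs)}$. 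The rationality statement of Theorem~\ref{t:Galois}(a) (that $\rho(\fs)\in\GLR{p}{\BQ_n}$ with $n=\ord\rho(\ft)=p$) then transfers directly to $\psi(\fs)$, yielding $\sqrt{p+1}\in\BQ_p$, a contradiction. This completely sidesteps the issue you flag as the main obstacle---identifying which $\ft$-eigenvector carries the unit object and tracking the normalisation $c=\zeta_p^{\,a}$---because the conclusion depends only on the field generated by the matrix entries, which is invariant under signed-permutation conjugation. Your computation from the Steinberg model buys an explicit description of the would-be quantum dimensions; the paper's shortcut buys a two-line proof.
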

\begin{proof}
  The result was established in \cite{Eh2} by using the integrality
of
  fusion rules and Verlinde formula. Here we provide another proof by using the
  rationality of modular representations of any modular category. Suppose there
  exists a modular category $\CC$ of rank $p$ which admits a modular representation $\rho$ equivalent to $\psi$ as representations of ${\rm SL}(2,\Z)$. The
  representation $\psi$ is given by
  \begin{eqnarray*}
    \psi(\ft)_{jk} & = & \delta_{jk} e^{\frac{2\pi i k}{p}} \\
    \psi(\fs)_{00} & = & \frac{-1}{p}\\
    \psi(\fs)_{0k} =  \psi(\fs)_{k0}& = & \frac{\sqrt{p+1}}{p}\quad \text{for } 0
    < k < p,\\
    \psi(\fs)_{jk} &=& \frac{1}{p} \sum_{a=1}^{p-1} e^{\frac{2\pi i}{p} (a j +
    a\inv k)} \quad \text{for } 0 < j,k < p\,.
  \end{eqnarray*}
In particular, $\rho$ is non-degenerate and $\psi(\fs) \not\in \GLR{p}{\BQ_p}$
  since $\sqrt{p+1} \not\in \BQ_p$ for $p>3$. By Lemma \ref{nondeglem},   there exists a signed permutation matrix $U$ such that $U \rho(\fs) U\inv =\psi(\fs)$. By Theorem \ref{t:Galois}, $\rho(\fs) \in \GLR{p}{\BQ_p}$, and so is $\psi(\fs)$, a contradiction.
\end{proof}

\section{Applications to Classification}
  \label{Applications: Rank 5 Classification}
\subsection{Rank $5$ Modular Categories}\label{Rank 5}

  In this section we will classify modular categories of rank $5$ as fusion
  subcategories of twisted versions of familiar categories associated to quantum
  groups of type $A$.

  Fix two integers $N\geq 2$ and $\ell>N$.  For any $q$ such that $q^2$ is a
  primitive $\ell$th root of unity we obtain a modular category
  $\CC(\mathfrak{sl}_{N},q,\ell)$ as a subquotient of the category of representations of
  $U_q\mathfrak{sl}_N$.  See \cite{R1} for a survey on the
  construction of such categories, which were first constructed as braided
  fusion categories by Andersen and collaborators and as modular categories
  by Turaev and Wenzl (see the references of \cite{R1}). The fusion rules of
  $\CC(\mathfrak{sl}_{N},q,\ell)$ do not depend on the choice of $q$, \textit{i.e.} for
  fixed $N$ and $\ell$ the categories $\CC(\mathfrak{sl}_{N},q,\ell)$ are all Grothendieck
  equivalent.
  We will denote by $SU(N)_k$ the modular
  category obtained from the choice $q=e^{\pi i/(N+k)}$, i.e.
  $SU(N)_k=\CC(\mathfrak{sl}_{N},e^{\pi i/(N+k)},N+k)$ where $k\geq 1$.  When $\ell$ and $N$
  are relatively prime the category $\CC(\mathfrak{sl}_N,q,\ell)$ factors as a (Deligne)
  product of two modular categories, one of which is (the maximal pointed modular subcategory) of rank $N$ with
  fusion rules like the group $\Z_N$.  For $SU(N)_k$ we will denote the corresponding quotient
  (modular) category by $SU(N)_k/\Z_N$.\footnote{This notation is conventional in
  conformal field theory where the term \emph{orbifold} is used.}

  We will prove:
  \begin{thm}
    \label{thm:fusionrules}
    Suppose $\mcC$ is a modular category of rank $5$.  Then $\mcC$ is Grothendieck
    equivalent to  one of the following:
    \begin{enumerate}
      \item $SU(2)_4$,
      \item $SU(2)_9/\Z_2$,
      \item $SU(5)_1$, or
      \item $SU(3)_4/\Z_3$.
    \end{enumerate}
  \end{thm}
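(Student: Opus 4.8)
Write $\Pi_\CC=\{0,1,2,3,4\}$ with $0$ the unit, let $(S,T)$ be the modular data, and put $d_0:=|\langle 0\rangle|=[\BK_0:\BQ]$ by Lemma~\ref{l:orbit}, the size of the $\Gal(\CC)$-orbit of the unit. The plan is to run the argument by cases on $d_0\in\{1,2,3,4,5\}$, in each case combining the Galois symmetry of Section~\ref{Galois Theory}, the arithmetic of modularly admissible fields (Proposition~\ref{201301221709}, Corollary~\ref{201302011406}, and the Cauchy Theorem), and the structure of a modular representation $\rho$ of $\CC$ with $\rho(\fs)=S/D$ (degree $5$, completely reducible) through Section~\ref{Representation Theory of SL(2,Z)}. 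Recall throughout that $\Gal(\CC)$ is abelian in $\mathfrak{S}_5$, that $\langle 0\rangle$ consists of self-dual classes, and that the permutation induced by complex conjugation lies in $\Gal(\CC)$ and fixes $0$. The case $d_0=5$ is immediate: $[\BK_0:\BQ]=|\Pi_\CC|=5$ is prime, so Theorem~\ref{t:irr} applies directly and $\CC$ is Grothendieck equivalent to $SU(2)_9/\Z_2$ — fusion rule (ii). The case $d_0=1$ makes $\CC$ integral (Lemma~\ref{integrallemma}(ii)); passing to the pseudo-unitary form the $d_i$ are positive integers, and using the Cauchy Theorem to match the primes of $N=\ord(T)$ with those of $D^2=\sum_i d_i^2$, the $\Gal(\CC)$-action on the linear characters $\phi_j$, and Corollary~\ref{c:1}(i), one shows $\CC$ is pointed, hence has Grothendieck ring $\BZ[\Z_5]$ ($=SU(5)_1$) — fusion rule (iii).

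For $d_0\in\{2,3\}$ the group $\Gal(\CC)$ has an orbit of size $d_0$ through the unit, so after relabelling it is one of $\langle(0\,1)\rangle$, $\langle(0\,1)(2\,3)\rangle$, $\langle(0\,1),(2\,3)\rangle$, $\langle(0\,1\,2)\rangle$, or a $\Z_6$. A $\Z_6$ is generated by a (transposition)(3-cycle): if the unit lies in the transposition part one gets $(0\,1)(2\,3\,4)\in\Gal(\CC)$, contradicting Lemma~\ref{l:c61}; if it lies in the $3$-cycle part then, since complex conjugation fixes $0$, either $\CC$ is self-dual and $(0\,1\,2)(3\,4)\in\Gal(\CC)$ with $3,4$ self-dual, contradicting Lemma~\ref{l:c62}, or $3^*=4$ — the configuration realised by $SU(3)_4/\Z_3$. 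When $\Gal(\CC)=\langle(0\,1)\rangle$, Lemma~\ref{201304261618} provides $S_{11}=1$, $d_1>0$, integrality of $d_1+1/d_1$ and of the $d_i^2/d_1$, and a vanishing entry $S_{ij}=0$; feeding these into the row-orthogonality relations, the dimension equation, and the Cauchy constraint determines the dimensions to be those of $SU(2)_4$, and the Verlinde formula then fixes the fusion rules to (i). When $\Gal(\CC)$ contains $(0\,1)(2\,3)$ the field $\BF_S$ is (bi)quadratic, so $N\mid 240$ by Corollary~\ref{201302011406}; combined with the Cauchy Theorem, the resulting short list of admissible orders of $T$, the Galois symmetry $\sigma^2(t_i)=t_{\hs(i)}$ (Theorem~\ref{t:Galois}) controlling the $\ft$-spectrum of $\rho$, Lemma~\ref{l:1}/Corollary~\ref{c:1}, and orthogonality of $S$, this subcase is eliminated; the same toolkit, now with Proposition~\ref{201301221709}(ii) (for $p=3$) to pin down $N$, eliminates $\Gal(\CC)=\langle(0\,1\,2)\rangle$. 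For the surviving $\Z_6$ ($d_0=3$, $\hs=(0\,1\,2)(3\,4)$, $3^*=4$) the arithmetic forces $N=7=\FSexp(\CC)$, and — as in the proof of Theorem~\ref{t:irr} — a comparison of the resulting non-degenerate degree-$5$ level-$7$ representation with the modular data of $SU(3)_4/\Z_3$ via Lemma~\ref{nondeglem} (Lemma~\ref{l:SUequivalence} then supplying the finer monoidal statement noted in the paper) identifies the fusion rules with (iv).

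It remains to rule out $d_0=4$. Then $\BK_0=\BK_\CC=\BF_S$ is quartic abelian, $\Gal(\CC)\in\{\Z_4,(\Z/2\Z)^2\}$ with orbit structure $\{0,1,2,3\},\{4\}$, and complex conjugation (fixing $0$, with trivial stabilizer since $\Gal(\CC)$ has order $4$) is trivial, so $\CC$ is self-dual, $(S/D)^2=C=\mathrm{Id}$, $\rho(\fs)^2=\mathrm{Id}$, and the eigenvalues of $S/D$ are $\pm1$. If $\Gal(\CC)=(\Z/2\Z)^2$ acts regularly on $\{0,1,2,3\}$, then each of its three involutions $\sigma$ satisfies $t_i=\sigma^2(t_i)=t_{\hs(i)}$, forcing $t_0=t_1=t_2=t_3$; hence $\rho(\ft)$ has an eigenvalue of multiplicity $4$, and complete reducibility, together with the impossibility of a scalar $\ft$ for an irreducible ${\rm SL}(2,\Z)$-representation of dimension $\ge 2$, forces $\rho\cong\rho_1\oplus\phi^{\oplus 3}$ with $\rho_1$ an irreducible $2$-dimensional representation and $\phi$ a linear character; but then $S/D$ has an eigenvalue of multiplicity $\ge 3$, and comparing determinants in $(\rho(\fs)\rho(\ft))^3=\rho(\fs)^2$ forces $t_4^3=\pm 1$, so $N\mid 6$, incompatible with $[\BF_S:\BQ]=4$. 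If instead $\Gal(\CC)=\Z_4$, then $t_0,\dots,t_3$ cannot all be equal (that would force $\sigma^2=\mathrm{id}$ on $\BF_t\supseteq\BF_S$, contradicting that $\sigma$ restricts to an element of order $4$ of $\Gal(\CC)$), so they are distinct, $\rho(\ft)$ has at least $4$ distinct eigenvalues, and $\rho$ is either an irreducible degree-$5$ representation or splits as (degree $4$)$\,\oplus\,$(linear); in either case the classification of low-dimensional ${\rm SL}(2,\Z)$-representations (TubaWenzl, Eholzer), Lemma~\ref{l:1}, Corollary~\ref{c:1}, the bound $N\mid 240$, the Cauchy Theorem, and orthogonality of $S$ together show no such $\CC$ exists. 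Hence $d_0=4$ does not occur, and collecting the surviving cases yields the four fusion rules.

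The main obstacle is the cluster of cases in which $\Gal(\CC)$ glues a two-element orbit to the orbit of the unit — $\langle(0\,1)(2\,3)\rangle$, $\langle(0\,1),(2\,3)\rangle$, and the two quartic cases — because there $\rho$ is typically reducible and degenerate, so Lemma~\ref{l:1} and Corollary~\ref{c:1} give only partial information and must be supplemented by fine arithmetic (the $N\mid 240$ bound, the Cauchy Theorem, the relation $(\rho(\fs)\rho(\ft))^3=\rho(\fs)^2$, and $\sigma^2(t_i)=t_{\hs(i)}$) together with a somewhat intricate analysis of the orthogonality relations among the rows of $S$ sorted by Galois orbit. The second delicate step is upgrading the information about the $d_i$ to the full fusion rules in the non-integral surviving cases, i.e. matching with $SU(2)_4$ and $SU(3)_4/\Z_3$; there one determines the modular data up to Galois conjugation and normalisation and applies the Verlinde formula, using Lemma~\ref{nondeglem} for the non-degenerate case and the pinned-down sign pattern of $S$ for $SU(2)_4$.
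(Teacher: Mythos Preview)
Your organization by $|\langle 0\rangle|$ is a reasonable variant of the paper's approach, but two parts of the argument do not go through as written.

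First, the paper does not derive the integral case ($|\langle 0\rangle|=1$, giving $SU(5)_1$) or the non-self-dual non-integral case (giving $SU(3)_4/\Z_3$) from the Galois/${\rm SL}(2,\Z)$ toolkit at all: it imports both from \cite[Thms.~3.1 and 3.7]{HR1} and then restricts the Galois case analysis to \emph{self-dual non-integral} $\CC$. Your one-line sketches (``one shows $\CC$ is pointed''; ``the arithmetic forces $N=7$ and one compares with $SU(3)_4/\Z_3$ via Lemma~\ref{nondeglem}'') are not proofs. In particular, for the surviving $\Z_6$ configuration with $3^*=4$ the modular representation is not a priori non-degenerate, so the mechanism of Theorem~\ref{t:irr} and Lemma~\ref{nondeglem} is not available without further work.

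Second, and more concretely, your $|\langle 0\rangle|=4$ analysis contains errors in both subcases. In the regular Klein-four case you write ``each of its three involutions $\sigma$ satisfies $t_i=\sigma^2(t_i)=t_{\hs(i)}$''. This conflates $\hs\in\Gal(\CC)$ with a lift $\sigma\in\Gal(\BQ_n/\BQ)$. Theorem~\ref{t:Galois}(b) gives $\sigma^2(t_i)=t_{\hs(i)}$ for $\sigma\in\Gal(\BQ_n/\BQ)$, but $\hs^2=\id$ does \emph{not} force $\sigma^2=\id$: one only knows $\sigma^2\in\Gal(\BQ_n/\BF_S)$, a nontrivial elementary $2$-group, so $\sigma^2(t_i)$ need not equal $t_i$. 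The paper's Proposition~\ref{prop: Rank5DisjointTranspositions} spends several pages reducing $n$ from a divisor of $240$ down through $5\nmid n$, $16\nmid n$, $8\nmid n$, $4\nmid n$ precisely because no exponent-$2$ statement is available until $n\mid 24$; and even then one only gets $t_0=t_1$, $t_2=t_3$, never $t_0=t_2$. In the $\Z_4$ case, your inference ``$t_0,\dots,t_3$ are not all equal, so they are distinct, and $\rho(\ft)$ has at least $4$ distinct eigenvalues'' is a non-sequitur and in fact false: since $\Gal(\BQ_n/\BQ)$ has exponent $4$ here, $\sigma^4=\id$ and one obtains $t_0=t_2$, $t_1=t_3$ (cf. Proposition~\ref{prop: Rank5Z4}), so $\rho(\ft)$ has at most three eigenvalues and the irreducible constituents of $\rho$ have degree at most $3$. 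The actual elimination is a parity/level argument on the degree-$2$ and degree-$3$ pieces, not a non-degeneracy argument.

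Finally, the cases $\Gal(\CC)=\langle(0\,1)\rangle$, $\langle(0\,1),(2\,3)\rangle$ and $\langle(0\,1)(2\,3)\rangle$ each require substantial work in the paper (Propositions~\ref{prop:no(1,-1,-1)}, \ref{prop: Rank5Klein4}, \ref{prop: Rank5DisjointTranspositions}), involving the twist equation \eqref{twisteq}, vanishing-sum arguments (Lemma~\ref{l:vanishing sum}), and case-by-case analysis of the $\ft$-spectra against Table~\ref{tableA1}; ``feeding these into orthogonality and Cauchy'' does not suffice.
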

  \begin{proof}
    This follows from \lemmaref{lemma: Rank5Cases} and Propositions \ref{prop: Rank5Z5}, \ref{prop:no(1,-1,-1)},
    \ref{prop: Rank5Z3}, \ref{prop: Rank5Z4}, \ref{prop: Rank5Klein4},
    \ref{prop: Rank5DisjointTranspositions}.
  \end{proof}
  \begin{rmk}
    Although this result only classifies rank $5$ modular categories up to fusion rules, a classification up to equivalence of monoidal categories
    can be obtained using \cite{KazWen}.
     Indeed, by \emph{loc. cit.} Theorem $A_\ell$ modular categories with fusion rules as in (i) resp. (iii) are monoidally equivalent to a Galois conjugate of $SU(2)_4$ followed by a twist of the associativities, resp. a Galois conjugate of $SU(5)_1$ (the non-trivial twists of $SU(5)_1$ have no modular structure).  Modular categories Grothendieck equivalent to $SU(2)_9/\Z_2$ (resp. $SU(3)_4/\Z_3$) are monoidally equivalent to a Galois conjugate of $SU(2)_9/\Z_2$ (resp. $SU(3)_4/\Z_3$)  by Lemma \ref{l:SUequivalence}.

    By \cite[Thm. $A_\ell$]{KazWen} there are \emph{at
    most} $N\varphi(2(k+N))$ (Euler-$\varphi$) inequivalent fusion categories that are Grothendieck
    equivalent to $SU(N)_k$ and \emph{at most} $\varphi(2(k+N))$ for $SU(N)_k/\Z_N$.  The factor of
    $N$ comes from twisting the associativities that is trivial on the quotient $SU(N)_k/\Z_N$ and the $\varphi(2(k+N))$ factor corresponds to a
    choice of a primitive $2(k+N)$th root of unity.  We do not know how many distinct modular categories with these underlying fusion categories there are.
  \end{rmk}

  We first reduce to the case where $\mcC$ is non-integral and self-dual by the following:
  \begin{prop}\rm{\cite[Thms. 3.1 and 3.7]{HR1}}
    Suppose $\mcC$ is a rank $5$ modular category.  Then
    \begin{enumerate}
      \item[(a)] if $\mcC$ is integral then $\mcC$ is Grothendieck equivalent to
      $SU(5)_1$;
      \item[(b)] if $\mcC$ is non-integral and not self-dual then $\mcC$
      is Grothendieck equivalent to $SU(3)_4/\Z_3$.
    \end{enumerate}
  \end{prop}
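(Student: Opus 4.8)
The plan is to combine the arithmetic of quantum dimensions with the Galois action on the $S$-matrix (Section~\ref{Galois Theory}) and the constraints on modular $\mathrm{SL}(2,\BZ)$-representations (Section~\ref{Representation Theory of SL(2,Z)}), using throughout that complex conjugation restricts on $\Pi_\CC$ to the duality $i\mapsto i^{*}$ and is an element of the abelian group $\Gal(\CC)\le\mathfrak S_5$. For part (a), assume $\CC$ is integral. By Lemma~\ref{integrallemma}(ii), and since weakly integral categories are pseudo-unitary, we may take $d_i=\FPdim(V_i)\in\BZ_{>0}$; then $d_0=1$, $d_i\ge2$ for each non-invertible $V_i$, $D^2=\FPdim(\CC)=\sum_i d_i^2$, and $d_i^2\mid D^2$ in $\BZ$ (integrality of formal codegrees). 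Put $m=|G(\CC)|$, so $m\mid D^2$ and $1\le m\le5$. If $m<5$ there are $5-m\ge1$ non-invertible simple objects, and solving $D^2=m+\sum d_i^2$ (the sum over these) subject to $d_i\ge2$ and $d_i^2\mid D^2$ leaves no solution for $m\in\{1,2,3\}$ and only $(d_0,\dots,d_4)=(1,1,1,1,2)$ with $D^2=8$ for $m=4$; but in that case the unique non-invertible object $X$ is fixed by $G(\CC)$, so the balancing equation~\eqref{Balancing} gives $\theta_g\theta_X S_{gX}=d_X\theta_X$, hence $|S_{gX}|=2$ for each of the four invertibles $g$, and then $\sum_j|S_{Xj}|^2\ge 16>8=D^2$ contradicts the unitarity of $S$. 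Therefore $m=5$, $\CC$ is pointed, $K_0(\CC)\cong\BZ[\BZ/5\BZ]\cong K_0(SU(5)_1)$, which proves (a).

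For part (b), assume $\CC$ is non-integral and not self-dual. Since $S$ is not real, $c\colon i\mapsto i^{*}$ is a non-trivial element of $\Gal(\CC)$; since $\CC$ is non-integral, Lemma~\ref{integrallemma}(ii) gives $|\langle0\rangle|\ge2$, and by Lemma~\ref{l:orbit} every object of $\langle0\rangle$ is self-dual, so $c$ fixes $\langle0\rangle$ pointwise. A non-trivial involution of a five-element set with at least two fixed points is a single transposition, so $c=(a\,b)$, $\langle0\rangle\subseteq\Pi_\CC\setminus\{a,b\}$, and $|\langle0\rangle|\in\{2,3\}$. Since $\Gal(\CC)$ is abelian and contains $(a\,b)$, it lies inside the centralizer $\langle(a\,b)\rangle\times\mathfrak S_{\Pi_\CC\setminus\{a,b\}}\cong\BZ/2\BZ\times\mathfrak S_3$, and $|\langle0\rangle|\ge2$ forces the $\mathfrak S_3$-factor to move $0$. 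Hence either $\Gal(\CC)\cong(\BZ/2\BZ)^2$, with $0$ in a $2$-cycle and $|\langle0\rangle|=2$, or $\Gal(\CC)\cong\BZ/6\BZ$, with a $3$-cycle $(0\,e\,f)\in\Gal(\CC)$ and $|\langle0\rangle|=3$; the latter is exactly the Galois group of $SU(3)_4/\BZ_3$.

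It then remains to rule out the biquadratic case and to identify the cyclic one. When $\Gal(\CC)\cong(\BZ/2\BZ)^2$, one first reduces (passing to a Galois conjugate, which does not change fusion rules) to the case $\CC$ pseudo-unitary: the label fixed by $\Gal(\CC)$ has rational $S$-column and $a,b$ are non-self-dual (so their columns are non-real), hence the Frobenius--Perron character must be $\phi_0$. Now $\BF_S=\BK_\CC$ is biquadratic, so $\ord(T)\mid240$ by Corollary~\ref{201302011406}, and combining this bound with the orthogonality relations on the rows indexed by $\{0,e\}$, $\{a,b\}$ and $\{f\}$, the symmetry $S_{ij}=\e_\s(i)\e_\s(j)S_{\hs(i)\hs(j)}$ of Lemma~\ref{l:4.5}, the structural constraints of Lemma~\ref{l:1} and Corollary~\ref{c:1}, and the Cauchy Theorem, one verifies that no modular category of this type exists. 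When $\Gal(\CC)\cong\BZ/6\BZ$, $\BK_0$ is a cyclic cubic field and the degree-$5$ modular representation $\rho$ of $\CC$ is, by Lemma~\ref{l:1} and Corollary~\ref{c:1}, either irreducible (hence non-degenerate by \cite{TubaWenzl}) or a direct sum whose summands do not have pairwise disjoint $\ft$-spectra; applying the Galois symmetry $\s^2(t_i)=t_{\hs(i)}$ of Theorem~\ref{t:Galois}(b) to both the $3$-cycle and the transposition in $\Gal(\CC)$, together with the twist equation~\eqref{twisteq} and the balancing equation, pins down the $\ft$-spectrum of $\rho$, whence the entire modular data agrees up to Galois conjugacy with that of $SU(3)_4/\BZ_3$, and the Verlinde formula~\eqref{Verlinde Formula} yields its fusion rules.

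The main obstacle is precisely these finite but delicate case analyses --- in (a), confirming that the divisibility constraints (plus the single unitarity computation for $D^2=8$) eliminate every non-pointed dimension vector, and in (b), ruling out the biquadratic Galois group and pinning down the cyclic one. The hardest single step is the $(\BZ/2\BZ)^2$ case of part (b), which requires the full interplay of $S$-orthogonality, the conductor bound $\ord(T)\mid240$, and the classification of degree-$\le5$ representations of $\mathrm{SL}(2,\BZ)$. Once the $\ft$-spectrum in the surviving case has been determined, reading off the fusion rules from the Verlinde formula is routine.
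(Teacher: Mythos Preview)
The paper does not prove this proposition at all: it is stated with a citation to \cite[Thms.\ 3.1 and 3.7]{HR1} and used as a black box to reduce the rank~$5$ classification to the self-dual non-integral case. So there is no ``paper's own proof'' to compare against; what you have written is an attempt at an independent argument using the machinery developed in this paper.

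Your treatment of part (a) is close to complete. The elimination of $m=4$ via the unitarity contradiction is correct, but the sentence ``leaves no solution for $m\in\{1,2,3\}$'' is an assertion, not an argument. For $m=3$ one should really use that $G(\CC)\cong\BZ/3\BZ$ must permute the two non-invertible simples, hence fix them, and then derive a contradiction; for $m=1,2$ the divisibility constraints $d_i^2\mid D^2$ together with small bounds do eliminate everything, but this has to be written out (or one invokes the $\BZ_2$-grading of strictly weakly integral categories to rule out $m$ odd, etc.). These are routine, but they are not literally on the page.

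Part (b) is where the genuine gap lies. Your reduction to the dichotomy $\Gal(\CC)\cong(\BZ/2\BZ)^2$ versus $\BZ/6\BZ$ is correct and nicely argued. But from that point on you have written a plan, not a proof. The phrases ``one verifies that no modular category of this type exists'' and ``pins down the $\ft$-spectrum of $\rho$, whence the entire modular data agrees \ldots'' are exactly the content of the result, and you have not supplied them. In the $(\BZ/2\BZ)^2$ case you would need, at minimum, to write down the shape of $S$ forced by Lemma~\ref{l:4.5} and the duality involution, extract the orthogonality and twist equations, and actually run the contradiction (note this case is \emph{not} covered by Proposition~\ref{prop: Rank5Klein4}, which assumes self-duality). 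In the $\BZ/6\BZ$ case you would need to determine $\ord(T)$ (the answer is $7$, by the way, so Proposition~\ref{201301221709} is relevant), exhibit the $\ft$-spectrum, and then match fusion rules. You yourself flag these as ``the main obstacle'' and ``the hardest single step''; that is an accurate self-assessment, but it means what you have is an outline of the Hong--Rowell argument rather than a proof.
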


  We therefore assume $\mcC$ is a non-integral, self-dual modular category of rank
  5 with Frobenius-Schur exponent $N$, and $\rho$ is an \textit{even} modular
  representation of level $n$.  In particular the $S$-matrix has real entries and is
  projectively in $SO(5)$.
  Next we enumerate the possible Galois groups $\Gal(\mcC)$ for rank $5$
  modular categories $\mcC$.

  \begin{lem}
    \label{lemma: Rank5Cases}
    Suppose $\mcC$ is a self-dual non-integral modular category of rank $5$.  Then
    up to reordering the isomorphism classes of simple objects we have
    $\Gal(\mcC)$ is cyclic and generated by one of the following:
    $\(0\;1\)$, $\(0\;1\;2\)$, $\(0\;1\;2\;3\)$, $\(0\;1\;2\;3\;4\;5\)$,
    $\(0\;1\)\(2\;3\)$; or it is a Klein 4 group given by either
    $\langle\(0\;1\),\(2\;3\)\rangle$, or $\langle (0\;1)(2\;3),(0\;2)(1\;3)\rangle$
  \end{lem}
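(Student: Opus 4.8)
The plan is to treat this as essentially a group-theoretic classification, drawing on only two inputs from the material above. First, by the discussion in Section~\ref{galsymsection} (recalled at the beginning of Section~\ref{Galois Theory}), the assignment $\s\mapsto\hs$ identifies $\Gal(\mcC)=\Gal(S)$ with an \emph{abelian} subgroup of $\Sym(\Pi_\CC)\cong\mathfrak{S}_5$, and relabelling the isomorphism classes of simple objects replaces this subgroup by an $\mathfrak{S}_5$-conjugate. Second, by Lemma~\ref{integrallemma}(ii), if $\Gal(\mcC)$ were trivial then $|\langle 0\rangle|=1$ and $\mcC$ would be integral, contrary to hypothesis; hence $\Gal(\mcC)$ is a \emph{nontrivial} abelian subgroup of $\mathfrak{S}_5$. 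So the lemma reduces to enumerating the conjugacy classes of nontrivial abelian subgroups of $\mathfrak{S}_5$.

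To carry out that enumeration I would proceed by order. Every element of $\mathfrak{S}_5$ has order at most $6$, and permutations are conjugate precisely when they share a cycle type, so a nontrivial \emph{cyclic} subgroup is conjugate to one generated by a transposition, a product of two disjoint transpositions, a $3$-cycle, a $4$-cycle, a $5$-cycle, or a product of a disjoint $3$-cycle and transposition; these account (up to relabelling) for the cyclic possibilities in the statement. For a \emph{noncyclic} abelian subgroup $A$: an abelian subgroup of $\mathfrak{S}_5$ whose order is divisible by $3$ (resp.\ by $5$) lies in the centralizer of a $3$-cycle (resp.\ a $5$-cycle), which has order $6$ (resp.\ $5$); and a $2$-subgroup of order $8$ is a Sylow subgroup, hence dihedral and nonabelian. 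It follows that $|A|=4$, so $A$ is a Klein four-group. A short analysis of pairs of commuting involutions then shows that a Klein four-group in $\mathfrak{S}_5$ is conjugate either to $\langle(0\;1),(2\;3)\rangle$ or to the regular Klein four-group $\langle(0\;1)(2\;3),(0\;2)(1\;3)\rangle$ inside $\mathfrak{S}_4\le\mathfrak{S}_5$. Assembling the cyclic and the two noncyclic cases yields the list displayed in the statement.

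The only step that needs genuine care is the completeness of this classification of abelian subgroups of $\mathfrak{S}_5$: that there are precisely two conjugacy classes of Klein four-groups and no abelian subgroup of order $\ge 8$. Both follow from routine small-group arguments (two commuting involutions either share the support of their $2$-cycles, which gives a subgroup of order $2$, or generate one of the two listed fours; and a $2$-subgroup of $\mathfrak{S}_5$ has order at most $8$ with nonabelian Sylow $2$-subgroup). I would also note that the self-duality hypothesis is not actually invoked in this lemma; it is carried along only because it is in force throughout the rest of Section~\ref{Rank 5}, where each Galois-group case is subsequently eliminated or matched with one of the four fusion rules.
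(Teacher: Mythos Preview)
Your enumeration of the abelian subgroups of $\mathfrak{S}_5$ is fine, but you have overlooked that your list of cyclic cases has \emph{six} entries while the lemma lists only \emph{five}: the cyclic group of order~$6$, generated by a product of a disjoint $3$-cycle and a transposition, does \emph{not} appear in the statement. So your sentence ``these account (up to relabelling) for the cyclic possibilities in the statement'' is simply false, and the proof has a genuine gap: you must rule out the order-$6$ case by an additional argument. In the paper this is exactly where Lemmas~\ref{l:c61} and~\ref{l:c62} are invoked. Note that there are two subcases, since the label $0$ (which is not freely relabelled---it is the unit object) can lie in either the $2$-orbit or the $3$-orbit, giving $\langle(0\,1)(2\,3\,4)\rangle$ and $\langle(0\,1\,2)(3\,4)\rangle$; these are $\mathfrak{S}_5$-conjugate but not conjugate once $0$ is fixed, and they are eliminated by Lemmas~\ref{l:c61} and~\ref{l:c62} respectively.

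Two related corrections. First, Lemma~\ref{integrallemma}(ii) gives you more than ``$\Gal(\mcC)$ is nontrivial'': it says precisely that $|\langle 0\rangle|>1$, i.e.\ $0$ is not fixed. You need this stronger conclusion, since relabelling only permutes the non-unit labels $1,\dots,4$. Second, your final remark that self-duality is not invoked is incorrect: Lemma~\ref{l:c62} requires the classes $r-2$ and $r-1$ to be self-dual, and it is the self-duality of $\mcC$ that guarantees this.
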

  \begin{proof}
    Since we have assumed $\mcC$ is not integral, Lemma \ref{integrallemma} implies
    $0$ is not fixed by $\Gal(\mcC)$.  Relabeling the simple objects if necessary
    we arrive at a list of possible groups. The groups
    $\langle\(0\;1\;2\)\(3\;4\)\rangle$ and $\langle\(0\;1\)\(2\;3\;4\)\rangle$ can
    be excluded by Lemmas \ref{l:c61} and \ref{l:c62}.
  \end{proof}

  First observe that the case $\Gal\(\mcC\)\cong\BZ_5\cong\langle (0\;1\;2\;3\;4)\rangle$ has been
  considered in \thmref{t:irr}.
  \begin{prop}
    \label{prop: Rank5Z5}
    If $\mcC$ is a rank $5$ modular category with $(0\;1\;2\;3\;4)\in\Gal(\mcC)$ then $\mcC$ is equivalent to $SU(2)_{9}/\BZ_2$ as fusion categories.
  \end{prop}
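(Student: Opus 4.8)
The plan is to reduce immediately to Theorem~\ref{t:irr}. The only thing specific to this proposition that requires checking is that the hypothesis $(0\;1\;2\;3\;4)\in\Gal(\CC)$ forces $|\Pi_\CC| = [\BK_0:\BQ] = 5$. Identifying $\Gal(\CC)$ with its image in $\Sym(\Pi_\CC)$ via $\s\mapsto\hs$ (as in Subsection~\ref{Galois Theory}), the presence of a $5$-cycle means that the $\Gal(\CC)$-orbit $\langle 0\rangle$ already contains $\{0,1,2,3,4\}$ under the cyclic subgroup generated by $(0\;1\;2\;3\;4)$ alone; since the rank is $5$ this forces $\langle 0\rangle = \Pi_\CC$, so $|\langle 0\rangle| = 5$. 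By Lemma~\ref{l:orbit} we get $[\BK_0:\BQ] = |\langle 0\rangle| = 5$, hence $|\Pi_\CC| = [\BK_0:\BQ] = 5$, which is prime. (Note that $|\langle 0\rangle| = 5 > 1$ also re-confirms via Lemma~\ref{integrallemma}(ii) that $\CC$ is non-integral, consistent with the standing assumptions of this subsection.)

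Next I would apply Theorem~\ref{t:irr} with $p = 5$. It yields that $q = 2p+1 = 11$ is prime, that $\FSexp(\CC) = 11$, and---crucially for the statement---that the underlying fusion category of $\CC$ is monoidally equivalent to a Galois conjugate of $SU(2)_{2p-1}/\Z_2 = SU(2)_9/\Z_2$. Since Galois conjugation preserves fusion rules, this in particular shows that $\CC$ is Grothendieck equivalent to $SU(2)_9/\Z_2$, which is the assertion of the proposition; in fact it gives the sharper statement that $\CC$ is monoidally equivalent to a Galois conjugate of $SU(2)_9/\Z_2$.

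There is essentially no obstacle here, since all the substantial work has been packaged in Theorem~\ref{t:irr} (which itself rests on Proposition~\ref{201301221709}, the representation theory of ${\rm SL}(2,\Z)$ from Subsection~\ref{Representation Theory of SL(2,Z)}, and Lemma~\ref{l:SUequivalence} together with the classification in \cite{KazWen}). The one step to be carried out in full is the short orbit-counting observation $|\langle 0\rangle| = 5$, which is immediate from the cycle type of the given Galois element; the only point to be careful about is invoking correctly the identification of $\Gal(\CC)$ with a subgroup of $\Sym(\Pi_\CC)$ and the resulting orbit notation.
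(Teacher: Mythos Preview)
Your proposal is correct and follows exactly the paper's approach: the paper simply remarks that this case ``has been considered in \thmref{t:irr}'' and states the proposition without further proof. Your only addition is the explicit verification, via Lemma~\ref{l:orbit}, that the presence of a $5$-cycle forces $[\BK_0:\BQ]=|\langle 0\rangle|=5$, so that the hypotheses of Theorem~\ref{t:irr} are met---this is a welcome clarification but not a departure from the paper's argument.
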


  Next we will consider the case that $\Gal(\mcC)=\langle (0\;1)\rangle$.
  The following lemma will be useful.
  \begin{lem}
    \label{l:vanishing sum}
    Let $a, b$ be non-zero rational integers. Suppose
    \begin{equation}
      \label{eq:vanishing sum}
      0=a+bi+ c_\a \a+ c_\b \b
    \end{equation}

    for some non-zero rational integers $c_\a$, $c_\b$ and roots of unity  $\a$, $\b$ with
    $\ord(\a) \le \ord(\b)$. Then $\a =\pm 1$, $\b=\pm i$ and
    \begin{equation*}
      a+ \a c_\a = 0\,, \quad b-i \b_2 c_\b = 0
    \end{equation*}
  \end{lem}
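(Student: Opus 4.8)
The plan is to first constrain the ratio $\alpha/\beta$ to a short list of $12$-th roots of unity, and then reduce the statement to a finite inspection inside $\BQ_{12}$.

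First I would rewrite the hypothesis as $-(a+bi)=c_\alpha\alpha+c_\beta\beta$ and multiply it by its complex conjugate $-(a-bi)=c_\alpha\overline\alpha+c_\beta\overline\beta$; using $\overline\eta=\eta^{-1}$ for a root of unity $\eta$ this yields
\[
a^2+b^2=c_\alpha^2+c_\beta^2+c_\alpha c_\beta\,(\alpha/\beta+\beta/\alpha).
\]
Since $c_\alpha c_\beta$ is a nonzero rational integer, $\zeta:=\alpha/\beta$ satisfies $\zeta+\zeta^{-1}\in\BQ$; being an algebraic integer with $|\zeta+\zeta^{-1}|\le 2$, we get $\zeta+\zeta^{-1}\in\{0,\pm1,\pm2\}$, so $\zeta$ is $\pm1$, $\pm i$, or a primitive cube or sixth root of unity, hence in every case a $12$-th root of unity. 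Substituting $\beta=\zeta^{-1}\alpha$ back into the relation gives $(c_\alpha+c_\beta\zeta^{-1})\alpha=-(a+bi)$; the bracketed factor is nonzero (otherwise $a=0$), so $\alpha=-(a+bi)/(c_\alpha+c_\beta\zeta^{-1})\in\BQ(i,\zeta)\subseteq\BQ_{12}$. Being a root of unity, $\alpha$, and hence also $\beta=\zeta^{-1}\alpha$, is a $12$-th root of unity.

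The statement is now a finite check. Write $\BQ_{12}=\BQ(i)(\sqrt3)$, a degree $2$ extension of $\BQ(i)$ with basis $\{1,\sqrt3\}$, and expand $\alpha=\alpha_1+\alpha_2\sqrt3$, $\beta=\beta_1+\beta_2\sqrt3$ with $\alpha_j,\beta_j\in\BQ(i)$. Comparing $\{1,\sqrt3\}$-coordinates over $\BQ(i)$ in the original relation gives $c_\alpha\alpha_2+c_\beta\beta_2=0$ and $a+bi+c_\alpha\alpha_1+c_\beta\beta_1=0$. A short tabulation of the twelve $12$-th roots of unity $\eta=\eta_1+\eta_2\sqrt3$ sorts them into three types: (a) $\eta\in\{\pm1,\pm i\}$, where $\eta_2=0$ and $\eta_1\in\{\pm1,\pm i\}$; (b) $\eta$ a primitive cube or sixth root, where $\eta_1\in\{\pm 1/2\}$ and $\eta_2\in\{\pm i/2\}$; (c) $\eta$ a primitive $12$-th root, where $\eta_1\in\{\pm i/2\}$ and $\eta_2\in\{\pm 1/2\}$. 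Because $c_\alpha,c_\beta\ne0$, the equation $c_\alpha\alpha_2+c_\beta\beta_2=0$ forces $\alpha_2$ and $\beta_2$ to be $\BQ$-proportional (both being $0$ if either is); since $i/2$ and $1/2$ are not $\BQ$-proportional, $\alpha$ and $\beta$ then have the same type. If both are of type (b), then $\alpha_1,\beta_1\in\BQ$ and $a+bi+c_\alpha\alpha_1+c_\beta\beta_1=0$ forces $b=0$; if both are of type (c), then $\alpha_1,\beta_1$ are purely imaginary and the same equation forces $a=0$; both contradict the hypotheses. Hence $\alpha,\beta\in\{\pm1,\pm i\}$. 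Since $a\ne0$ and $b\ne0$ in $a+bi+c_\alpha\alpha+c_\beta\beta=0$, exactly one of $\alpha,\beta$ lies in $\{\pm1\}$ and the other in $\{\pm i\}$; since elements of $\{\pm1\}$ have order at most $2$ while those of $\{\pm i\}$ have order $4$, the hypothesis $\ord(\alpha)\le\ord(\beta)$ forces $\alpha\in\{\pm1\}$ and $\beta\in\{\pm i\}$. Separating the real and imaginary parts of the relation then gives $a+\alpha c_\alpha=0$ and $b-i\beta c_\beta=0$, as claimed.

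The only genuine work is the tabulation of the twelve roots of unity together with the small case-split eliminating types (b) and (c); everything else is a line or two of algebra. The subtlety worth flagging is that the hypothesis $\ord(\alpha)\le\ord(\beta)$ is indispensable and should be used exactly once, at the very end: without it the configuration $\alpha=\pm i$, $\beta=\pm 1$ also solves the equation, so the ordering of the two roots of unity really is needed.
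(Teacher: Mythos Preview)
Your proof is correct and entirely self-contained, whereas the paper's argument is shorter but leans on an external structural result. The paper reduces to showing $\alpha,\beta\in\BQ(i)$ and, assuming otherwise, invokes a theorem of Conway--Jones on vanishing sums of roots of unity to split the relation into two subsums $a+c_x x=0$ and $ib+c_y y=0$, which immediately forces $x=\pm 1$, $y=\pm i$ and gives a contradiction. Your approach instead exploits the norm trick $(-a-bi)(-a+bi)=(c_\alpha\alpha+c_\beta\beta)(c_\alpha\overline\alpha+c_\beta\overline\beta)$ to pin down $\alpha/\beta$ as a $12$-th root of unity, hence $\alpha,\beta\in\BQ_{12}$, and then finishes by a direct coordinate computation in $\BQ_{12}=\BQ(i)(\sqrt3)$. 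The gain is that you avoid any black-box citation and the argument is genuinely elementary; the cost is the explicit tabulation and case-split on the three ``types'' of $12$-th roots, which the Conway--Jones route bypasses. Your observation at the end, that the hypothesis $\ord(\alpha)\le\ord(\beta)$ is needed exactly once to rule out the swapped configuration, is a nice point that the paper leaves implicit.
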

  \begin{proof}
    If $\a, \b \in \BQ(i)$, then $\a,\b$ are fourth roots unity. The $\BQ$-linear
    independence of $\{1, i\}$ implies that $\a = \pm 1$ and $\b = \pm i$. Thus,
    the remainder equalities follow immediately. Therefore, it suffices to show
    that $\a,\b \in \BQ(i)$.

    Suppose that  $\a$ or $\b$ is not in $\BQ(i)$. Then \eqref{eq:vanishing
    sum} implies that
    $[\BQ(i, \a):\BQ(i)] = [\BQ(i,\b):\BQ(i)]$. Hence, both $\a,\b$ are not in
    $\BQ(i)$. Note that  $\a, \b$ are $\BQ(i)$-linearly independent otherwise $\a,
    \b \in \BQ(i)$. By \cite[Thm. 1]{CJ}, there exist $x, y \in \{\a, \b\}$  such
    that $x, y/i$ have squarefree orders,  and
    \begin{equation*}
      a+ c_x x =0, \quad i b+  c_y y =0\,.
    \end{equation*}

    These equations force $\a= x =\pm 1$ and $\b=y = \pm i$, and hence $\a, \b \in
    \BQ(i)$, a contradiction.
  \end{proof}

  We have:  \begin{prop}
    \label{prop:no(1,-1,-1)}
    If $\Gal(\mcC)=\langle (0\;1)\rangle$ then $\mcC$ is Grothendieck equivalent to
    $SU(2)_4$.
  \end{prop}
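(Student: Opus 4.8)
The plan is to nail down the $S$-matrix of $\mcC$ — hence, by the Verlinde formula, its fusion rules — using the rigidity forced by $\Gal(\mcC)=\langle(0\;1)\rangle$ together with the structure of the even modular representation $\rho$ (of level $n$) already fixed above. Since the rank is $5\ge 5$, Lemma~\ref{201304261618} and Lemma~\ref{l:4.5} apply with $\hat\sigma=(0\;1)$: $d_1>0$ and $m:=d_1+d_1^{-1}\in\BZ_{\ge 2}$ (so $d_1$ is a real quadratic algebraic unit, equal to $1$ iff $m=2$), $d_j^2/d_1\in\BZ$ and $D^2/d_1\in\BZ$ for $j\ge 2$, $S_{11}=1$, and $\epsilon_j:=S_{1j}/d_j\in\{\pm1\}$ are not all equal with $S_{ij}=0$ whenever $\epsilon_i=-\epsilon_j$. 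As $\mcC$ is self-dual, $\BF_S=\BK_0$ is a real quadratic field, and since the classes $2,3,4$ are $\Gal(\mcC)$-fixed, Lemma~\ref{l:orbit} gives $S_{i2}/d_2,\ S_{i3}/d_3,\ S_{i4}/d_4\in\BQ$ for all $i$; relabeling $\{2,3,4\}$ I may assume $\epsilon_2=\epsilon_3=-\epsilon_4$, so $S_{24}=S_{34}=0$.

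Next I would solve the orthogonality relations $SS^{t}=D^2 I$, using Lemma~\ref{l:4.5} ($S_{ij}=\epsilon_\sigma(i)\epsilon_\sigma(j)S_{\hat\sigma(i)\hat\sigma(j)}$), to express every entry of $S$ through $d_1,\dots,d_4$ and two signs: the $0$th and $4$th columns give $S_{44}=-1-\epsilon_4 d_1$ and $2d_4^2+S_{44}^2=D^2$; columns $0,1$ give $2d_1=\epsilon_2(d_4^2-d_2^2-d_3^2)$; and columns $0,1,2,3$ force $S_{22}=S_{33}$, pin down $S_{23}$, and give $2d_2^2+S_{22}^2+S_{23}^2=D^2$. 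Observe that if $m\ge 3$ then $d_1\notin\BQ$, every $d_j$ ($j\ge 2$) is irrational (otherwise the Galois relation $\sigma(d_j)=\epsilon_j d_j/d_1$ forces $d_1=\pm1$), and $D^2=d_1\cdot(D^2/d_1)\notin\BZ$, so $\mcC$ is not weakly integral; since $SU(2)_4$ has $D^2=12\in\BZ$, the crux is to prove $m=2$.

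For that I would bring in $\rho$ and the arithmetic. By Corollary~\ref{c:1}(i) $\rho$ is not a sum of $1$-dimensional representations, and $[\BF_S:\BQ]=2$ excludes $\rho$ being irreducible of degree $5$ (any admissible degree-$5$ irreducible has level divisible by $11$, by Lemma~\ref{l:inadp} and the Chinese Remainder Theorem, and hence $5\mid[\BF_S:\BQ]$). By Lemma~\ref{l:1} no two summands of $\rho$ have disjoint $\ft$-spectra; combining this with the classification of ${\rm SL}(2,\BZ)$-representations of degree $\le 5$, Corollary~\ref{c:1}(ii), Remark~\ref{r1} and Lemma~\ref{l:4.25} narrows $\rho$ to $\rho_2\oplus\rho_3$ with $\rho_2,\rho_3$ non-degenerate irreducible and \emph{even}, of degrees $2$ and $3$. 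The admissible even irreducibles of degrees $2$ and $3$ form a short explicit list, so $n$ — and hence $N=\FSexp(\mcC)=\ord(T)$ via $N\mid n\mid 12N$, $\BF_S\subseteq\BQ_n$ and $\Gal(\BQ_n/\BF_S)$ an elementary $2$-group (Theorem~\ref{t:Galois}, Lemma~\ref{l:2group}) — is restricted to finitely many values. Imposing the Cauchy Theorem (the primes of $D^2$ and $N$ coincide) and the dimension relations of the previous paragraph on this finite list should leave only $m=2$, $D^2=12$, and, up to relabeling, the dimension vector $(1,1,\sqrt{3},\sqrt{3},2)$; the formulas above then determine $S$ up to a Galois automorphism and an overall sign and identify it with the $S$-matrix of $SU(2)_4$. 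By the Verlinde formula the fusion rules of $\mcC$ agree with those of $SU(2)_4$, and self-duality of $\mcC$ yields the claimed Grothendieck equivalence.

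The main obstacle I expect is the last elimination: ruling out every decomposition type of $\rho$ other than $\rho_2\oplus\rho_3$, and then every candidate tuple $(n,N,d_1,\dots,d_4)$ besides the $SU(2)_4$ one. Some branches are genuine near misses — for example $m=4$ again gives $\BF_S=\BQ(\sqrt 3)$ and passes all purely field-theoretic tests — so each has to be excluded by a short computation pitting the orthogonality relations against the Cauchy Theorem and the admissibility/level tables for small ${\rm SL}(2,\BZ)$-representations.
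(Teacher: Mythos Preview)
Your route differs substantially from the paper's. The paper never invokes the decomposition of the modular representation $\rho$ for this Galois group; instead it computes directly with the twist equation \eqref{twisteq} together with a tailor-made lemma (Lemma~\ref{l:vanishing sum}) on vanishing sums $a+bi+c_\alpha\alpha+c_\beta\beta=0$ with $a,b,c_\alpha,c_\beta\in\BZ$ and $\alpha,\beta$ roots of unity. Concretely, the paper treats the two sign patterns $(\epsilon_2,\epsilon_3,\epsilon_4)=(1,1,-1)$ and $(1,-1,-1)$ separately; in each, three well-chosen instances of \eqref{twisteq} force $\theta_1=1$ and produce a quadratic equation pinning one of the remaining twists to $\mu_3$, whence $\BK_\mcC=\BQ(\sqrt3)$. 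Substituting back into $p^+=\sum_j d_j^2\theta_j$ and dividing through by $d_1$ yields an equation of exactly the shape governed by Lemma~\ref{l:vanishing sum}; this gives a contradiction in the first pattern and forces $d_1=1$ (hence $D^2=12$ and the $SU(2)_4$ $S$-matrix) in the second.

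Your representation-theoretic plan, while in the spirit of Propositions~\ref{prop: Rank5Z3}--\ref{prop: Rank5DisjointTranspositions}, has a genuine gap precisely at the step you flag: forcing $m=2$. Constraining the level $n$ via a $(2{+}3)$ decomposition and then applying the Cauchy theorem controls only the \emph{prime support} of $D^2$, not the size of the real quadratic unit $d_1$; your own ``near miss'' $m=4$ has the same $\BF_S=\BQ(\sqrt3)$ and the same prime support as $SU(2)_4$, and nothing in your outline rules it out. The paper's leverage is the twist equation, which ties $d_1$ directly to twists lying in small explicit sets; without an analogue of that computation (or of Lemma~\ref{l:vanishing sum}) I do not see how your elimination closes. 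Two smaller points: Lemma~\ref{l:inadp} excludes only the level-$5$ degree-$5$ irreducible, so your claim that any admissible degree-$5$ irreducible has $11\mid n$ needs a different source; and the assertion that orthogonality of columns $0,1,2,3$ forces $S_{22}=S_{33}$ is not correct without first knowing $d_2=d_3$, which in the paper is obtained only at the very end of the argument.
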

  \begin{proof}
  Suppose $\mcC$ is a rank 5 modular category with $\Gal(\mcC)=\langle (0\;1)\rangle$.
  By \eqref{eq:2} and \lemmaref{201304261618}, the $S$-matrix is of the form
  \begin{equation*}
    S =\mtx{1 & d_1 & d_2 & d_3 & d_4 \\
    d_1& 1 & \e_2 d_2 & \e_3 d_3 &\e_4 d_4 \\
    d_2 & \e_2 d_2 & S_{22} & S_{23} & S_{24}\\
    d_3 & \e_3 d_3 & S_{32} & S_{33} & S_{34}\\
    d_4 & \e_4 d_4 & S_{42} & S_{43} & S_{44}
    }
  \end{equation*}

  where $\e_i = \pm 1$ and
  $\(\e_{2},\e_{3},\e_{4}\)\neq\(-1,-1,-1\)$ or $\(1,1,1\)$.  After renumbering, we may therefore assume that $(\e_2,\e_3,\e_4)\in\{(1,1,-1),(1,-1,-1)\}$.

    Suppose that $(\e_2, \e_3, \e_4) =(1,1,-1)$. We first use Lemma \ref{201304261618} to conclude that  $S_{24}=S_{34}=0$ and
    orthogonality of the first and last columns of $S$ to obtain $S_{44}=d_1-1$. Then we use
    the twist equation (\ref{twisteq})
    for $(j,k) = (2,4)$, $(0,4)$ and $(4,4)$ to obtain
    \begin{eqnarray}
      \label{eq:tw1} 0=p^{+} S_{24} &=&\theta_2 \theta_4 (d_2 d_4-\theta_1 d_2d_4),\\
      \label{eq:tw2}  p^{+} d_4 &=& \theta_4 (d_4-\theta_1 d_1d_4+\theta_4 d_4 (d_1-1
      )),\\
      \label{eq:tw3}  p^{+} (d_1-1) &=& \theta^2_4 (d^2_4+\theta_1 d^2_4+\theta_4 (d_1-1
      )^2)\,.
    \end{eqnarray}

    It follows immediately from \eqref{eq:tw1} that $\theta_1=1$ and hence, by
    \eqref{eq:tw2},
    \begin{equation*}
      p^{+}=(d_1-1)\theta_4(\theta_4-1)\,.
    \end{equation*}

    Therefore,
    \begin{equation*}
      D^2=2(d_1-1)^2(1-Re(\theta_4))  \text{ and } d_1 \not\in \BQ\,.
    \end{equation*}

    Since $\frac{S_{i4}}{d_4}$ is an algebraic integer fixed by $\Gal(\mcC)$,
    $\frac{S_{i4}}{d_4}  \in \BZ$ for all $i$. In particular,
    \begin{equation*}
      n_{44}=\frac{d_1-1}{d_4} \in \BZ.
    \end{equation*}

    and
    \begin{equation*}
      D^2=(2+n^2_{44}) d_4^2\,.
    \end{equation*}
    It follows from \eqref{eq:tw3} that

    \begin{equation*}
      p^{+}  = (d_1-1 )( \frac{2}{n_{44}^2}\theta^2_4 +\theta_4^3 )
    \end{equation*}

    and this implies $\frac{2}{n_{44}^2}\theta^2_4
    +\theta_4^3=\theta_4(\theta_4-1)$ or
    \begin{equation*}
      \theta_4^2 +(\frac{2}{n_{44}^2}-1)\theta_4 +1=0\,.
    \end{equation*}

    Thus, $[\BQ(\theta_4):\BQ]\le 2$ and so $\theta_4 \in \ol\mu_4 \cup \ol\mu_6$. Note that $\frac{2}{n_{44}^2}-1 \not\in\{0,
    -1, \pm 2\}$. Therefore, $\theta_4 \not \in \ol\mu_4 \cup \mu_6$. Thus, $\theta_4 \in \mu_3$  and $n_{44}=\pm 1$.
    Now, we find $D=\sqrt{3}|d_1-1|$, $p^{+}=-2 i Im(\theta_4) (d_1-1)=\pm i \sqrt{3}
    (d_1-1)$.
    Since the $S$-matrix is real, $\CC$ is self-dual.  So by \cite[Thm. 2.7(5)]{RSW}, $D\in \BK_\mcC$ and so $\sqrt{3} \in \BK_\mcC$. Since $[\BK_\mcC:\BQ]=2$, $\BK_\mcC
    =\BQ(\sqrt{3})$.

    We now return to the equation
    \begin{equation}\label{eq4.5}
      p^{+}= 1+d_1^2 + (d_1-1)^2\theta_4 +\theta_2 d_2^2+ \theta_3 d_3^2.
    \end{equation}
Multiplying this equation by $2/d_1$, using $p^{+}=-(\theta_4-\overline{\theta_4})(d_1-1)$ and $\theta_4+\overline{\theta_4}+1=0$ we can reexpress equation (\ref{eq4.5}) by
    \begin{equation}
      \label{eq:theta-rel}
      0=2 +  \Tr(d_1)  + 2i Im(\theta_4)(d_1-1/d_1) + 2\theta_2 N(d_2) + 2\theta_3 N(d_3) \,,
    \end{equation}
where $N(d_i)=d_i^2/d_1$ for $i=2,3$, and $\Tr(d_1)=d_1+1/d_1.$
   Note that $2+ \Tr(d_1)$,  $N(d_2)$ and $N(d_3)$ are
    non-zero integers. Since $\BZ[\sqrt{3}]$ is the ring of algebraic integers in $\BQ(\sqrt{3})$, $2 Im(\theta_4) (d_1-1/d_1)$ is also a  non-zero integer.
    We may simply assume $\ord(\theta_2) \le \ord(\theta_3)$. By
    \lemmaref{l:vanishing sum}, $\theta_2 = \pm 1$ and
    \begin{equation*}
      2+ \Tr(d_1) + 2\theta_2 N(d_2)=0\,.
    \end{equation*}

    Since $2+ \Tr(d_1)$ and  $2 N(d_2)$ are positive, $\theta_2=-1$ and $2 d_2^2
    =(d_1+1)^2$. Thus, $\sqrt{2}  =\pm \frac{d_1+1}{d_2} \in \BQ(\sqrt{3})$, a
    contradiction.

 Therefore we must have $(\e_2, \e_3, \e_4) = (1,-1,-1)$ and, by Lemma \ref{201304261618},
    \begin{equation*}
      S =\mtx{1 & d_1 & d_2 & d_3 & d_4 \\
      d_1& 1 &  d_2 & - d_3 &- d_4 \\
      d_2 &  d_2 & S_{22} & 0 & 0\\
      d_3 & - d_3 & 0 & S_{33} & S_{34}\\
      d_4 & - d_4 & 0 & S_{34} & S_{44}
      }\,.
    \end{equation*}

    By the orthogonality of the columns of $S$, $S_{22}=-(d_1+1)$. Since
    $\frac{S_{22}}{d_2}$ is fixed by $\Gal(\CC)$,
    \begin{equation*}
      n_{22}=\frac{S_{22}}{d_2} =  \frac{-(d_1+1)}{d_2} \in \BZ.
    \end{equation*}

    By Lemma \ref{integrallemma} the vector of FP-dimensions is in one of the first two rows, so $d_1, d_2>0$ and
    $n_{22}<0$. We now apply the twist equation (\ref{twisteq})
    for $(j,k) = (2, 0)$, $(2,2)$ and $(2,3)$ to obtain
    \begin{eqnarray}
      \label{eq2:tw1} p^{+} d_2 &=& \theta_2 (d_2+\theta_1 d_1d_2-\theta_2 d_2
      (d_1+1)),\\
      \label{eq2:tw2}  -p^{+} (d_1+1) &=& \theta^2_2 (d^2_2+\theta_1 d^2_2+\theta_2
      (d_1+1 )^2),\\
      \label{eq2:tw3}   0=p^{+} S_{23} &=&\theta_2 \theta_3 (d_2 d_3-\theta_1 d_2d_3)\,.
    \end{eqnarray}
    The equation \eqref{eq2:tw3} implies $\theta_1=1$, and so equations
    \eqref{eq2:tw1}, \eqref{eq2:tw2} become
    \begin{eqnarray}
      \label{eq2:tw4} \frac{p^{+}}{d_1+1}  &=& \theta_2 (1-\theta_2),\\
      \label{eq2:tw5}  -\frac{p^{+}}{d_1+1} &=& \theta^2_2 (\frac{2} {n^2_{22}}+\theta_2
      ).
    \end{eqnarray}

    Thus, $\theta_2$ satisfies the quadratic equation
    \begin{equation*}
      \theta^2_2 + (\frac{2} {n^2_{22}}-1)\theta_2 +1=0\,.
    \end{equation*}
    Since $n_{22}$ is a negative integer, $\frac{2} {n^2_{22}}-1 \ne 0, -1, \pm 2$.
    Therefore, $\theta_2\in\mu_3$, $n_{22}=-1$ and
    $d_2=d_1+1$. Moreover,
    \begin{equation*}
      p^{+}= 2 i Im (\theta_2) (d_1+1) = \pm i \sqrt{3} (d_1+1), \quad \text{and}\quad
      D^2=3 (d_1+1)^2.
    \end{equation*}

    In particular, $D=\sqrt{3}(d_1+1)$. Since $\mcC$ is self-dual, by \cite[Thm. 2.7(5)]{RSW}, $\sqrt{3} \in \BK_\mcC$
    and hence $\BK_\mcC=\BQ(\sqrt{3})$.

    We now return to the equation
    \begin{equation}
      p^{+}= 1+d_1^2 + (d_1+1)^2\theta_2 +\theta_3 d_3^2+ \theta_4 d_4^2
    \end{equation}

    which can be rewritten (in a similar way as in eqn. (\ref{eq4.5})) as
    \begin{equation}
      \label{eq:theta-rel2}
      0=-2 +  \Tr(d_1)  + 2i Im(\theta_2)(d_1-1/d_1) + 2\theta_3 \frac{d_3^2}{d_1} +
      2\theta_4 \frac{d_4^2}{d_1} \,.
    \end{equation}
    Without loss of generality, we may simply assume
    $\ord(\theta_3)\le \ord(\theta_4)$.
   We first prove that $d_1\in \BQ$. Suppose not. Since  $-2 +  \Tr(d_1)$,  $2i Im(\theta_2)(d_1-1/d_1)$,  $\frac{d_3^2}{d_1}$  and $\frac{d_4^2}{d_1}$
    are non-zero integers,  by Lemma \ref{l:vanishing sum}, we
    find $\theta_3=\pm 1$ and
    \begin{equation*}
      -2 +  \Tr(d_1) + 2\theta_3 \frac{d_3^2}{d_1} =0\,.
    \end{equation*}

    Since $-2 +  \Tr(d_1), \frac{d_3^2}{d_1}> 0$, $\theta_3=-1$ and $2 d_3^2
    =(d_1-1)^2$. However, this implies
    $\sqrt{2} =\pm\frac{d_1-1}{d_3} \in \BQ(\sqrt{3})$, a contradiction.  Therefore $d_1\in\BQ$.

    Since $1/d_1$ is a Galois conjugate of $d_1$, $d_1=1$. Now, \eqref{eq:theta-rel2} becomes $0=\theta_3 \frac{d_3^2}{d_1} + \theta_4
    \frac{d_4^2}{d_1}$
    or
    \begin{equation*}
      \theta_3/\theta_4 = -\frac{d_4^2/d_1}{d_3^2/d_1} \in \BQ\,.
    \end{equation*}

    This forces $\theta_3=-\theta_4$ and $d_4^2=d_3^2$. Since
    \begin{equation*}
      12 = D^2 = 1+ 1+ 2^2+ 2d_3^2,
    \end{equation*}

    we obtain $d_3=\pm \sqrt{3}$.

    Suppose $d_3 = \nu_1 \sqrt{3}$ and $d_4 = \nu'_1 \sqrt{3}$ for some signs
    $\nu_1, \nu'_1$. The fusion rule $N_{23}^4 = \nu_1 \nu'_1$ implies
    $\nu_1=\nu'_1$. It follows from the orthogonality of the $S$-matrix that
    \begin{equation*}
      0=  S_{33} +  S_{34} =   S_{34} +  S_{44}= 6 + (S_{33} +  S_{44})S_{34}, \quad
      S_{33}^2+S_{34}^2= 6 =S_{34}^2+S_{44}^2\,.
    \end{equation*}

    Therefore,
    \begin{equation*}
      S_{33}=S_{44}=-\nu_2 \sqrt{3},  \quad S_{34}=\nu_2 \sqrt{3}
    \end{equation*}

    for any sign $\nu_2$. We find
    \begin{equation*}
      S =\mtx{1 & 1 & 2 & \nu_1 \sqrt{3} & \nu_1 \sqrt{3} \\
      1& 1 & 2 & -\nu_1 \sqrt{3} & -\nu_1 \sqrt{3} \\
      2 &  2 & -2 & 0 & 0\\
      \nu_1 \sqrt{3} &-\nu_1 \sqrt{3} & 0 & -\nu_2 \sqrt{3} & \nu_2
      \sqrt{3}\\
      \nu_1 \sqrt{3} & -\nu_1 \sqrt{3}& 0 & \nu_2 \sqrt{3} & -\nu_2 \sqrt{3}
      }\,.
    \end{equation*}

    On can check directly the four possible $S$-matrices of $\mcC$ generate the same
    fusion rules using the Verlinde formula.  These fusion rules coincide with
    those of $SU(2)_4$.

    We return to the twist equation (\ref{twisteq}) with $(j,k)=(0,3)$ to obtain
    \begin{equation*}
      \theta_3^2= -\nu_2 2 i \,Im(\theta_2)/\sqrt{3} = -\nu_2 \nu_3 i
    \end{equation*}

    where $\nu_3 = \pm 1$ is determined by $\theta_2 = e^{\nu_3 2 \pi i/3}$.
    One can check directly that for any $\theta_2 \in \mu_3$ and
     $\theta_3 \in \mu_8$ satisfying the above equation, the twist
    equation will hold for  $T=\diag(1,1,\theta_2, \theta_3, -\theta_3)$. Thus,
    there are 16 possible pairs of $S$ and $T$-matrices for $\mcC$.
  \end{proof}

  \begin{remark}
    Each of the 16 possible pairs of $S$ and $T$ matrices are realized.  By
    applying a Galois automorphism we may assume $\nu_1=1$, that is,
    $d_i=\FPdim(V_i)$ for all $i$.  Then the $8$ pairs $(S,T)$ with $\nu_1=1$ lead to $4$ distinct modular categories after relabeling the last two objects, which all appear
    in \cite[Example 5D]{GNN1}.
  \end{remark}

  Next we show
  \begin{prop}
    \label{prop: Rank5Z3}
    If $\mcC$ is a self-dual modular category of rank $5$, then $\Gal(\mcC) \not\cong
    \BZ_3$.
  \end{prop}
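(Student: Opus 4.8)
The plan is to combine the Galois symmetry of the $S$-matrix with the arithmetic of the cubic field $\BK_\mcC$, in the spirit of the proof of \propref{prop:no(1,-1,-1)}. By \lemmaref{lemma: Rank5Cases}, after relabelling the simple objects we may assume $\Gal(\mcC)=\langle\sigma\rangle$ with $\hs=(0\;1\;2)$; thus $3$ and $4$ are $\Gal(\mcC)$-fixed while $\{0,1,2\}$ is a single orbit, so $\mcC$ is non-integral and $[\BK_\mcC:\BQ]=3$. Since $|\langle j\rangle|=1$ for $j=3,4\notin\langle 0\rangle$, \lemmaref{integrallemma}(iii) lets us further assume $\mcC$ is pseudo-unitary, so that $d_i=\FPdim(V_i)>0$ for all $i$. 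First I would read off the shape of $S$ from \eqref{eq:galois1} and \eqref{eq:2}: as columns $3$ and $4$ are $\Gal(\mcC)$-fixed, $\phi_3$ and $\phi_4$ are $\BZ$-valued characters of $K_0(\mcC)$ (their values lie in $\BK_3=\BK_4=\BQ$ and are algebraic integers), so the last two rows and columns of $S$ equal $d_3$, resp.\ $d_4$, times integer vectors; and since $\hs(0)=1$ we have $\sigma(d_i)=S_{i1}/d_1$ and $\sigma^2(d_i)=S_{i2}/d_2$ for all $i$, which --- fed back into \eqref{eq:2} --- determines the $\{0,1,2\}\times\{0,1,2\}$ block of $S$ up to finitely many signs, with every off-diagonal entry of that block of the form $\pm 1$ or $\pm d_j$.

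Next I would extract arithmetic constraints. Self-duality gives $S=S^t$ and, as the charge-conjugation matrix is $\Id$ in \eqref{eq:STrelations}, $S^2=D^2\Id$; moreover $D\in\BK_\mcC$ by \cite[Thm.\ 2.7(5)]{RSW}. Reading off the $(3,3)$ and $(4,4)$ entries of $S^2=D^2\Id$ shows that $D^2/d_3^2$ and $D^2/d_4^2$ are positive rational integers $\ge 2$. From $\sigma(d_i)=S_{i1}/d_1$ one gets $\sigma(D^2)=D^2/d_1^2$ (because $\sigma$ carries the $0$th column of $S$ to its $1$st column), and iterating --- using that $\sigma^3$ acts trivially on $\BK_\mcC$ --- forces the norm of $d_1$ down to $\BQ$ to be $\pm 1$, i.e.\ $d_1$ is an algebraic unit; this in turn pins several entries of $S$ (for instance $S_{12}=\pm1$ and $S_{i3}=\pm d_3$, $S_{i4}=\pm d_4$ for $i=1,2$). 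I would then feed the resulting $S$ into the twist equation \eqref{twisteq} for the pairs $(j,k)\in\{(0,3),(0,4),(3,4)\}$ to force $\theta_3$ and $\theta_4$ into a short list of roots of unity. Finally, since $\BF_t/\BK_\mcC$ is modularly admissible with $\BF_S=\BK_\mcC$ of degree $3=3^1$ over $\BQ$, \propref{201301221709} leaves exactly two possibilities: either $\BK_\mcC$ is the cubic subfield of $\BQ_7$, with $7\mid N$ and $N\mid 168$, or $\BK_\mcC$ is the cubic subfield of $\BQ_9$, with $9\mid N$ and $N\mid 72$, where $N=\FSexp(\mcC)$.

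To conclude, I would invoke the Cauchy Theorem \cite[Thm.\ 3.9]{BNRW1} --- the prime divisors of $D^2$ and of $N$ coincide --- together with the dimension equation $D^2=1+d_1^2+d_2^2+d_3^2+d_4^2$ and the column orthogonality relations, and check that in neither of the two cases can the remaining sign patterns be completed to admissible modular data: in the cubic field $\BK_\mcC$ the relevant totally ramified prime ($7$, resp.\ $3$), together with the $\BZ$-valuedness of $\phi_3$ and $\phi_4$ and the relations $D^2=(D^2/d_3^2)\,d_3^2=(D^2/d_4^2)\,d_4^2$, overdetermines the $d_i$. The step I expect to be the main obstacle, exactly as in \propref{prop:no(1,-1,-1)}, is the bookkeeping of the several undetermined signs in $S$ together with the bifurcation into the two conductor cases. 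A genuine subtlety absent from the $\BZ_2$-case treated there is that $D^2$ need not be rational here, so the Cauchy-type argument must be carried out with field norms and prime ideals inside the cubic field $\BK_\mcC$ rather than with $D^2$ as an ordinary integer.
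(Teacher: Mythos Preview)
Your approach is genuinely different from the paper's. The paper never analyses the $S$-matrix directly in this case; it works entirely with the even modular representation $\rho$ of ${\rm SL}(2,\Z)$. From \propref{201301221709} it reaches the same dichotomy you do ($7\mid n\mid 24\cdot 7$ or $9\mid n\mid 8\cdot 9$), but then eliminates each branch by representation-theoretic table lookup. In the $7$-branch, $\rho$ must contain $\xi\otimes\phi$ with $\xi$ irreducible of level $7$ and degree $3$ or $4$; degree $3$ dies by \corref{c:1} (its $\ft$-spectrum lies in $\mu_7$, disjoint from $\ol\mu_{120}$), and the degree-$4$ representation is odd, contradicting \lemmaref{l:4.25}. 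In the $9$-branch one is forced into $\rho\cong\phi'\oplus(\phi\otimes\xi)$ with $\xi$ of level $9$ and degree $4$; after twisting by $(\phi')^*$ one writes the unique such $\xi$ down explicitly and shows that any symmetric conjugate of $\rho'(\fs)$ would place $\sqrt{3}$ in $\BQ_9$. So the paper's proof is table lookup plus one explicit $4\times 4$ matrix computation, and never touches $d_i$, the twist equation, or the Cauchy theorem.

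Your $S$-matrix analysis is sound as far as it goes: the shape of the $\{0,1,2\}$-block, the integrality of $\phi_3,\phi_4$, and the unit property of $d_1,d_2$ are all correct. But the endgame you sketch is where I would worry. You defer the actual contradiction to ``check that the remaining sign patterns cannot be completed'', and you already flag that the Cauchy argument has to be run with prime ideals in the cubic field. This is substantially harder than in \propref{prop:no(1,-1,-1)}: there the key device was \lemmaref{l:4.5} for an order-$2$ permutation with a fixed point, which forced $S_{ii}=S_{\hs(i)\hs(i)}$ and produced clean rational-integer equations like $n_{44}\in\BZ$ and a quadratic for $\theta_4$. For a $3$-cycle there is no such simplification; $d_3,d_4$ live genuinely in the cubic field (your own computation $\sigma(d_3)=\pm d_3/d_1$ shows they are not rational), and the twist equations for $(0,3),(0,4),(3,4)$ become identities in $\BK_\mcC$ with several sign ambiguities rather than integer Diophantine equations. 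I do not see an outright obstruction to your route, but it is not the short casework you suggest, and the paper's ${\rm SL}(2,\Z)$ shortcut bypasses all of it.
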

  \begin{proof}

  Suppose that $\Gal(\CC)\cong \BZ_3$ and $\rho$ is an \emph{even} level $n$ modular representation of $\CC$ (cf. Remark \ref{r1}).  Since $(\BK_\CC, \BQ_n)$ is admissible,  by
     \propref{201301221709} we have either $7\mid n \mid 24\cdot 7$ or $9\mid n \mid 8 \cdot 9$. We will eliminate these two possibilities.

    Suppose $7 \mid n \mid 24 \cdot 7$. Then $\rho$ has an irreducible subrepresentation $\rho_1$
    of level $7 f$ where $(7,f)=1$ and $7 f \mid n$.  Thus, $\rho_1\cong \xi
    \o \phi$ for some irreducible representations $\xi$ and $\phi$ of levels $7$ and
    $f$ respectively. By \cite[Table 1]{Eh1}, $\deg \xi=3, 4$ and
    hence $\deg \phi=1$.

    If $\deg \xi=3$, then, by Table \ref{tableA1}, its $\ft$-spectrum is a subset  of $\mu_7$. This is not possible by Corollary \ref{c:1}. If $\deg
    \xi=4$, then by Table \ref{tableA1}  $\xi$ is odd; this contradicts Lemma
    \ref{l:4.25}.

    Now suppose $9 \mid n \mid 8\cdot 9$. Then $\rho$ has an irreducible subrepresentation $\rho_1$
    of level $9 f$ where $(3,f)=1$ and $9 f \mid n$.  Thus,
    $\rho_1\cong \xi \o \phi$ for some irreducible representations $\xi$ and $\phi$ of
    levels $9$ and $f$ respectively. By \cite[Table 2]{Eh1},
    $\deg \xi=4$ and hence $\deg \phi=1$. Thus, $\rho\cong \phi' \oplus (\phi\o \xi) $ for some degree 1 representation $\phi'$.

    By Lemma \ref{l:4.25}, $\xi$, $\phi', \phi$ are all even. Therefore, $(\phi')^* \o
    \rho \cong \chi_0 \oplus ((\phi')^* \o\phi \o \xi)$ where $\chi_0$ is the
    trivial representation of ${\rm SL}(2,\Z)$. Note that $(\phi')^* \o \rho$ is isomorphic
    to another even modular representation $\rho'$ of $\mcC$.

    Let $\rho_1=((\phi')^* \o\phi \o \xi)$. By Lemma \ref{l:1}, $\rho_1(\ft)$ has an
    eigenvalue $1$ and so $((\phi')^* \o\phi )^3 = \chi_0$. Therefore, $\rho_1$ is a
    level 9 irreducible representation of ${\rm SL}(2,\Z)$.
    By \cite[Table A3]{Eh1},  $\rho_1$ is isomorphic to $R$ or $R^*$ defined by
    \begin{equation*}
      R(\fs):=\frac{2}{3} \mtx{
      s_1 & s_5 & s_7 & s_6\\
      s_5 & -s_7 & -s_1 & s_6\\
      s_7 & -s_1 & s_5 & -s_6\\
      s_6 & s_6 & -s_6 & 0}
      ,\quad
      R(\ft):=\diag( \zeta, \zeta^7, \zeta^4, 1)
    \end{equation*}

    with $s_j = \sin(\pi j/18)$ and $\zeta=\exp(2 \pi i/9)$. Note that
    $R(\fs)=R^*(\fs)$.

    Since $\rho' \cong \rho_1 \oplus \chi_0$,  $\rho$ is of level 9 and $\rho'(\fs)$
    is a matrix over $\BQ_9$. Let $R'=R\oplus \chi_0$. Then, there exists a permutation matrix $P$ and a unitary matrix $U$  such that
    \begin{equation*}
      P \rho'(\ft) P\inv= R^\prime (\ft) =\diag(\w_1, \w_2, \w_3, 1, 1)=U\inv R^\prime(\ft)U,
    \end{equation*}
    where $\w_1, \w_2, \w_3$ are distinct 9-th roots of unity.
    Moreover,
    \begin{equation*}
      P \rho'(\ft) P\inv = U\inv R^\prime(\ft)U, \quad
      P \rho'(\fs) P\inv = U\inv R'(\fs) U.
    \end{equation*}

     This
    implies that $U$ is of the form
    \begin{equation*}
      \mtx{
      u_1 & 0 & 0 & 0 & 0\\
      0 & u_2 & 0 & 0 & 0\\
      0 & 0 & u_3 & 0 & 0\\
      0 & 0 & 0 & a & b\\
      0 & 0 & 0 & -\ol b & \ol a}
    \end{equation*}

    where $|u_1|=|u_2|=|u_3|=1$ and $|a|^2+|b|^2=1$. We can further assume that
    $u_1=1$. Since
    $P \rho'(\fs) P\inv$ is symmetric, $u_1, u_2$ are $\pm 1$ and  $a, b $ are real.
    Thus, the $(1,4)$ and $(1,5)$ entries of $P \rho'(\fs) P\inv$ are
    $\frac{a}{\sqrt{3}}$ and $\frac{b}{\sqrt{3}} \in \BQ_9$. This implies $ab \in
    \BQ_9$ and $ \frac{1}{3}+ \frac{2ab}{\sqrt{3}}=(\frac{a}{\sqrt{3}}+\frac{b}{\sqrt{3}})^2 \in
    \BQ_9\,.$ Hence, $\sqrt{3}\in \BQ_9$ but this contradicts that the conductor of $\sqrt{3}$
    is 12.
  \end{proof}

  \begin{prop}
    \label{prop: Rank5Z4}
   If $\CC$ is a self-dual non-integral modular category of rank $5$ then $\Gal(\mcC) \not\cong \BZ_4$.
  \end{prop}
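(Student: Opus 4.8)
Suppose, for contradiction, that $\mcC$ is a self-dual non-integral rank $5$ modular category with $\Gal(\mcC)\cong\BZ_4$. By \lemmaref{lemma: Rank5Cases} we may assume $\Gal(\mcC)=\langle(0\;1\;2\;3)\rangle$, so that $\langle 0\rangle=\{0,1,2,3\}$, the label $4$ is fixed by $\Gal(\mcC)$, and $\BK_\CC=\BK_0$ is a \emph{cyclic} quartic field. Since $\mcC$ is self-dual it carries an \emph{even} modular representation $\rho$ (Remark \ref{r1}); write $s=\rho(\fs)$, $t=\rho(\ft)$, and let $n=\ord(t)$ be its level. As $(\BK_\CC,\BQ_n)$ is modularly admissible with $[\BK_\CC:\BQ]=2^2$, \propref{201301221709}(iii) gives $n=2^a p_1\cdots p_l$ with the $p_i$ distinct Fermat primes. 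Because $\Gal(\BK_\CC/\BQ)$ is \emph{cyclic} of order $4$ while $\Gal(\BQ_n/\BK_\CC)$ is an elementary abelian $2$-group, $(\BZ/n\BZ)^\times$ must contain an element of order $4$; this forces $16\mid n$ or $p_i\equiv 1\pmod 4$ for some $i$.

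I would next reduce to finitely many levels. Decomposing $\rho$ into irreducibles and each irreducible further, by the Chinese Remainder Theorem, as a tensor product of prime-power-level irreducibles $\xi$, every such $\xi$ is an absolutely irreducible ${\rm SL}(2,\Z)$-representation of degree $\le 5$ whose level is a prime power dividing $n$. Since any ${\rm SL}(2,\Z)$-representation of prime level $q$ has degree at least $(q-1)/2$, which exceeds $5$ for every Fermat prime $q\ge 17$, no prime $\ge 17$ divides $n$; with \propref{201301221709}(iii) this gives $n=2^a 3^\delta 5^\epsilon$, $\delta,\epsilon\in\{0,1\}$. Feeding this back into the divisibility constraint of the previous paragraph and the requirement that $\Gal(\BQ_n/\BK_\CC)$ be an elementary abelian $2$-group forces either $5\mid n$ with the $2$-part of $n$ dividing $8$ (so $n\mid 120$), or $5\nmid n$ with $16\mid n$ and the $2$-part of $n$ dividing $32$ (so $n\in\{16,32,48,96\}$); Proposition \ref{201302041108}, applied to the conductor of $\BK_\CC$, trims the list further.

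For each surviving level I would analyze the decomposition of $\rho$ along the lines of \propref{prop: Rank5Z3} and \propref{prop:no(1,-1,-1)}. The key instruments are: Lemma \ref{l:1} and Corollary \ref{c:1}, which forbid any splitting of $\rho$ into summands with disjoint $\ft$-spectra and force a degree-$3$ subrepresentation to carry a $120$-th root of unity in its $\ft$-spectrum; Lemma \ref{l:4.25} and Remark \ref{r1}, which force the irreducible constituents of a reducible even $\rho$ that splits off linear pieces to be even, contradicting the parities recorded for the candidate representations in \cite{Eh1,Eh2}; and, in the cases that survive these parity and spectral tests, an explicit realization of the relevant irreducible(s) from \cite{Eh1} (the analogue of the matrices $R(\fs),R(\ft)$ used in \propref{prop: Rank5Z3}), matched against $(s,t)$ up to a signed permutation by Lemma \ref{nondeglem}. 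Comparing $S$-matrix entries then produces an equation forcing $\sqrt{2}$ or $\sqrt{3}$ into $\BQ_n$ for an $n$ of too-small conductor --- precisely as $\sqrt{3}\notin\BQ_9$ aborts the $\BZ_3$ case and $\sqrt{2}\notin\BQ(\sqrt3)$ aborts the $\langle(0\;1)\rangle$ case --- the desired contradiction.

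The main obstacle is the arithmetic branching. In the $\BZ_3$ case \propref{201301221709}(ii) collapsed everything to the two tidy possibilities $7\mid n\mid 24\cdot7$ and $9\mid n\mid 8\cdot9$, but for $\BZ_4$ the Fermat-prime clause of \propref{201301221709} genuinely ramifies --- levels built out of $16$, out of $5$, and their products --- so one must exploit the \emph{cyclicity} of $\Gal(\BK_\CC/\BQ)$, not merely that it is a $2$-group, to keep the candidate list finite and then dispatch each branch by the representation-theoretic tools above. A secondary subtlety: whether the Galois generator of $\BZ_4$ lifts to an element of order $4$ or $8$ in $\Gal(\BQ_n/\BQ)$ decides whether the $\ft$-spectrum of $\rho$ is multiplicity-free (in the order-$4$ case $t_0=t_{\hs^2(0)}$, so $\rho$ is degenerate), so the ``$\rho$ absolutely irreducible of degree $5$'' reduction available when $\rho$ is non-degenerate has to be separated from the genuinely reducible case; and since $\mcC$ need not be pseudo-unitary one either works directly with $(S,T)$ or, noting that the only label outside $\langle 0\rangle$ is the fixed label $4$, replaces $\mcC$ by a pseudo-unitary Galois conjugate via \lemmaref{integrallemma}(iii).
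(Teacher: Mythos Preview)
Your setup and level reduction are essentially correct and match the paper: $n$ lands in one of the ranges $5\mid n\mid 120$, $16\mid n\mid 48$, $32\mid n\mid 96$. But from there the proposal is only a plan, and it overlooks the step that makes the paper's argument short.

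First, the case $32\mid n$ is dispatched in one line --- the smallest irreducible representation of $\SL{32}$ has degree $6>5$ --- not by the vague appeal to \propref{201302041108} you suggest. More importantly, what you call a ``secondary subtlety'' is in fact the entire engine once $32\mid n$ is gone. For $5\mid n\mid 120$ and $16\mid n\mid 48$ the group $(\BZ/n\BZ)^\times$ has exponent $4$, so any lift $\sigma$ of $(0\;1\;2\;3)$ satisfies $\sigma^4=\id$. Galois symmetry then gives $t_2=\sigma^4(t_0)=t_0$ and $t_3=t_1$, so $t=\diag(z,\sigma^2(z),z,\sigma^2(z),w)$ with $w\in\ol\mu_{24}$ and $z^{24}\neq 1$. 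Thus $\rho(\ft)$ has at most three distinct eigenvalues, so by \cite{TubaWenzl} every irreducible constituent of $\rho$ has degree $\le 3$; since $z^{24}\neq 1$ none can be linear (Lemma~\ref{l:1} would then isolate the summand carrying $w$). Hence $\rho$ is an even $2\oplus 3$ with $\ft$-spectrum partition $\{\{z,\sigma^2(z)\},\{z,\sigma^2(z),w\}\}$. In the $16$-case the degree-$2$ piece would have level $16$, but no such irreducible exists (Table~\ref{tableA1}). In the $5$-case the degree-$3$ piece factors as $\psi\otimes\chi$ with $\psi$ the even level-$5$ degree-$3$ irreducible, which forces the degree-$2$ piece to be the level-$5$ degree-$2$ irreducible tensored with the same $\chi$ --- but that representation is \emph{odd}, contradicting evenness of $\rho$.

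So the paper needs no $S$-matrix arithmetic, no explicit matrix realizations, and no $\sqrt{2}$-or-$\sqrt{3}$-in-the-wrong-field argument; the endgame you speculate about (modeled on \propref{prop: Rank5Z3}) is both unnecessary and, as stated, unsubstantiated. Your outline could perhaps be pushed through, but it would be substantially longer, and as written it neither eliminates $32\mid n$ nor exploits the degeneracy of $t$ to pin down the $2\oplus 3$ decomposition.
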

  \begin{proof}

    Assume to the contrary.  Let $\rho$ be a level $n$ even modular representation of
    $\mcC$ (see Remark \ref{r1}), and $\s\in \GalQ{n}$ be such that $\hs=(0\;1\;2\;3)\in\mathfrak{S}_5$ is a generator of the image of $\Gal(\mcC)$ in $\mathfrak{S}_5$.
    It follows from \propref{201301221709} that the level $n$ of $\rho$ satisfies
    one of the following cases:
    \begin{enumerate}
    \item  $5 \mid n \mid 24\cdot 5$,
    \item $16 \mid n \mid 3\cdot 16$,
    \item $32 \mid n \mid 3\cdot 32$.
    \end{enumerate}

    By \cite[Table 7]{Eh1}, the smallest irreducible representation of level 32 is
    6-dimensional. Therefore, case (iii) is impossible.

    In cases (i) and (ii) we find that $\Z_n^*=\Gal(\BQ_n/\BQ)$ has exponent $4$, so that
    $\sigma^4=\id$.  Applying Theorem \ref{t:Galois}(iii) we find that $\rho(\mathfrak{t})=t=\diag(z,\sigma^2(z),z,\sigma^2(z),w)$ where
    $w \in \ol\mu_{24}$ and $z$ is a root of unity such that $5 \mid \ord (z) \mid 24\cdot 5$ or $16 \mid \ord(z) \mid 3\cdot 16$.  By \cite{TubaWenzl}  $\rho$ cannot have an irreducible subrepresentation
    of dimension more than $3$. Moreover, $\rho$ cannot have $1$-dimensional subrepresentations: $z$ cannot be the image of $\mathfrak{t}$ in a $1$-dimensional ${\rm SL}(2,\Z)$-representation as $z^{24}\neq 1$, and by Lemma \ref{l:1} $w$ cannot be the image of $\mathfrak{t}$ in a $1$-dimensional representation either, since $w$ is distinct from $z$ and $\sigma^2(z)$.

We can therefore conclude that $\rho$ is a direct sum of even irreducible representations $\rho_2$ and $\rho_3$ of degrees 2 and 3 respectively. The corresponding partition of the $\ft$-spectrum of $\rho$  is  $\{\{z, \s^2(z)\}, \{z, \s^2(z), w\}\}$. In particular, the levels of these representations are multiple of $\ord(z)$.   If $16 \mid n \mid 48$, then $16 \mid \ord(z)$ and there must be an irreducible representation of level 16 and degree 2. By  Table \ref{tableA1}, this is not possible and we conclude that  $5\mid n\mid 24\cdot 5$.

The representation $\rho_3 \cong \psi \o \chi$ for some irreducible representations $\psi$ of degree 3 and level 5, and $\chi$ of degree 1. By Table \ref{tableA1}, $\psi$ is even, and so must be $\chi$. Thus, the spectrum of $\rho_3(\ft)$ is $\{w \zeta, w/\zeta, w\}$ for some $\zeta \in \mu_5$ and $w \in \ol\mu_6$.  This forces the $\ft$-spectrum of $\rho_2$ to $\{w \zeta, w/\zeta\}$ and so $\rho_2 \cong \psi' \o \chi$ for some irreducible representations $\psi'$ of degree 2 and level 5. By Table \ref{tableA1}, $\psi'$ is odd, and so must be $\rho_2$. This contradicts that $\rho$ is even.
\end{proof}

  \begin{prop}
    \label{prop: Rank5Klein4}  If $\CC$ is a self-dual non-integral modular category of rank $5$ then
    $\Gal(\mcC)\not\cong\langle (0\;1),(2\;3)\rangle$
  \end{prop}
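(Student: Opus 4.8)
The plan is to argue by contradiction: assume $\Gal(\CC)=\langle(0\;1),(2\;3)\rangle$, so that the $\Gal(\CC)$-orbits on $\Pi_\CC$ are $\{0,1\}$, $\{2,3\}$ and the singleton $\{4\}$, and play the Galois structure of the modular data off against the representation theory of ${\rm SL}(2,\Z)$, in the spirit of the proofs of Propositions \ref{prop: Rank5Z3} and \ref{prop: Rank5Z4}. First I would record the shape of $(S,T)$. Since $\{4\}$ is a singleton orbit, $\s(d_4)=\pm d_4$ for $\s\in\Gal(\CC)$ by \eqref{eq:galois2}, so $d_4^2\in\BZ$. Since the rank is odd, each of $(0\;1)$ and $(2\;3)$ has a fixed point, so Lemma \ref{l:4.5} applies to lifts $\s,\tau\in\AQ$ with $\hs=(0\;1)$ and $\hat\tau=(2\;3)$, giving $S_{00}=S_{11}=1$, $S_{22}=S_{33}$, $d_3=\pm d_2$, the sign relations $S_{1j}=\pm d_j$ and the analogous ones for rows $2$ and $3$, and the vanishing of the corresponding $S_{ij}$ whenever two signs disagree. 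Moreover $\s(d_1)=S_{11}/d_1=1/d_1$, so $d_1+d_1^{-1}\in\BZ$ and $d_1$ is a real quadratic unit. Since $\CC$ is non-integral, Lemma \ref{integrallemma} rules out the Frobenius--Perron dimensions being a rational multiple of the $\Gal(\CC)$-fixed column $4$; and if they were a multiple of column $2$ or $3$, then $V_1$ would be invertible and hence $d_1=\pm1$ by self-duality, contradicting $[\BK_0:\BQ]=2$. Hence the Frobenius--Perron dimensions form column $0$ or $1$, and in particular $d_1>0$.

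Next I would fix the arithmetic. As $\Gal(\CC)\cong(\BZ/2\BZ)^2$ the field $\BK_\CC=\BF_S$ is multi-quadratic, and since $\CC$ is self-dual it is also real. Take the even modular representation $\rho$ of $\CC$ attached to $(\frac{1}{D}S,\frac{1}{\zeta}T)$, of level $n=\ord(t)$ with $\BF_t=\BQ_n$; by Lemma \ref{l:2group} the extension $\BQ_n/\BK_\CC$ is modularly admissible, so Corollary \ref{201302011406} gives $n\mid 240=2^4\cdot3\cdot5$. Thus $\BK_\CC$ is one of the seven real biquadratic subfields of $\BQ(\sqrt2,\sqrt3,\sqrt5)$, $\ord(T)\mid 240$, and by the Cauchy Theorem the primes dividing $D^2$ lie in $\{2,3,5\}$. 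Now decompose $\rho$ into ${\rm SL}(2,\Z)$-irreducibles, each non-degenerate by \cite{TubaWenzl}. If $\rho$ is irreducible (degree $5$), then writing it over $\SL{n}$ by the Chinese Remainder Theorem and using that $5$ is the only prime power dividing $240$ that admits a $5$-dimensional irreducible, one gets $\rho\cong\psi\o\chi$ with $\psi$ the pullback of the $5$-dimensional irreducible of $A_5\cong\SL{5}/\{\pm I\}$ and $\chi$ linear; then $\ord(T)=\FSexp(\CC)=5$, $\BK_\CC\subseteq\BF_T=\BQ_5$, and $\Gal(\CC)$ would be cyclic --- impossible. If $\rho$ is reducible and has a degree-$3$ subrepresentation, then Corollary \ref{c:1}(ii) puts a primitive $120$th root of unity into the $\ft$-spectrum of that subrepresentation; but every representation of ${\rm SL}(2,\Z)$ of dimension at most $3$ and level dividing $240$ has $\ft$ of order not divisible by $120$, since a prime factor $5$ of that order forces an $A_5$-constituent, which contributes a $\ft$-eigenvalue of order exactly $5$, while the other constituents are characters contributing order dividing $12$. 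Hence the only surviving case is $\rho\cong\phi\oplus\xi$ with $\xi$ an irreducible representation of degree $4$, necessarily even.

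In this last case Lemma \ref{l:1} forces $\phi(\ft)$ to be an eigenvalue of $\xi(\ft)$, so $\rho(\ft)$ has a repeated eigenvalue; together with the Galois relations $\s^2(t_0)=t_1$, $\tau^2(t_2)=t_3$ from Theorem \ref{t:Galois}(b), this pins down the possible shapes of the $\ft$-spectrum. I would then go through the possible levels $\ell\mid 240$ of $\xi$ (read off from Eholzer's tables, Table \ref{tableA1}): if $\ell=5$ then $\ord(T)=5$ and $\Gal(\CC)$ is cyclic; if the cyclotomic field $\BQ_{\ord(T)}$ forced by $\BK_\CC\subseteq\BF_T$ has no real subfield with Galois group $(\BZ/2\BZ)^2$ --- for instance $\BQ_8$ and $\BQ_{12}$ are not real, the only biquadratic subfield $\BQ(\zeta_8)$ of $\BQ_{16}$ is not real, and $\BQ_{15}$, $\BQ_{20}$ have no real $(\BZ/2\BZ)^2$-subfield --- then there is an immediate contradiction; and in the finitely many remaining configurations one uses the explicit $\fs$-matrix of $\xi$ together with the reality, symmetry and positivity of the first row of $S$, the orthogonality relations \eqref{eq:orthogonality}, the twist equation \eqref{twisteq}, and the requirement that $d_1$ be the prescribed real quadratic unit, to derive the contradiction, exactly in the manner of the ``$\sqrt3\notin\BQ_9$'' step in Proposition \ref{prop: Rank5Z3}. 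The main obstacle is precisely this last, finite but delicate, case analysis: deciding which low-level irreducibles of ${\rm SL}(2,\Z)$ of degree at most $4$ can be assembled into an even rank-$5$ representation whose $S$-matrix is real, symmetric, has positive first row, and is defined over a real biquadratic field, and then ruling out each surviving configuration.
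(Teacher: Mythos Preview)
Your plan diverges substantially from the paper's argument, and in its current form it has a genuine gap that breaks the key reduction step.

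The step where you rule out a degree-$3$ subrepresentation is incorrect. Corollary~\ref{c:1}(ii) asserts only that the $\ft$-spectrum of a degree $r-2$ subrepresentation must contain \emph{some} $120$-th root of unity, not a \emph{primitive} one. Since almost every small irreducible of ${\rm SL}(2,\Z)$ at level dividing $240$ has an eigenvalue of order dividing $120$ (for instance, the even degree-$3$ level-$5$ representation has $1$ in its $\ft$-spectrum), the corollary gives no contradiction here. So your elimination of the $3{+}2$ and $3{+}1{+}1$ shapes does not go through. You also do not address the $2{+}2{+}1$ decomposition at all, and, as you yourself note, the degree-$4$ case is left as an open-ended case analysis. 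That means the proposal does not actually establish the proposition.

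For comparison, the paper's proof avoids the ${\rm SL}(2,\Z)$-decomposition entirely and works directly with the $S$-matrix and the twist equation. One first uses the two commuting order-$2$ Galois symmetries to determine the sign pattern of $S$, and an orthogonality/sign argument forces $S_{24}=0$, $e_1=e_2=1=-e_3$, $a=1$, so that $S_{44}=d_1-1$ and $M:=(1+d_1)/d_2\in\BZ$. The twist equation \eqref{twisteq} (in the same style as Proposition~\ref{prop:no(1,-1,-1)}) then pins down $d_4=\pm(d_1-1)$, $p^{+}=\pm i\sqrt{3}(d_1-1)$, $p^{+}/p^{-}=-1$, and $\theta_4\in\mu_3$. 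From $p^{+}+p^{-}=0$ one gets $2\,\mathrm{Re}(\theta_2+\theta_3)=-M^2$, and orthogonality of the first two rows gives the Diophantine equation $(M^2-2)N=6$ with $N=d_2^2/d_1$; the unique positive solution is $(M,N)=(2,3)$, whence $\theta_2=\theta_3=-1$ and $\BF_T=\BQ_3$. But then $d_1=5+2\sqrt{6}\notin\BQ_3$, contradicting Theorem~\ref{t:cong1}. This route is short and self-contained; if you want to pursue the representation-theoretic approach instead, you would need a correct replacement for the degree-$3$ elimination (e.g.\ Lemma~\ref{l:1} combined with the actual $\ft$-spectra in Table~\ref{tableA1} and the constraint \eqref{eq:t_form} from Galois symmetry), a treatment of the $2{+}2{+}1$ case, and the full degree-$4$ analysis.
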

  \begin{proof}
    Suppose $\Gal(\mcC)=\langle \s,\tau\rangle$  such that $\hs=(0\;1)$ and
    $\hat{\tau}=(2\;3)$. For notational convenience we set $\delta_i=\e_\tau(i)$ and
    $\e_i=\e_\sigma(i)$. Galois symmetry (with respect to $\sigma$) applied to
    $S_{i,(i+1)}$ gives us the following condition for each $i\geq 2$: \emph{either
    $S_{i,(i+1)}=0$ or $\e_i=\e_{i+1}$}.  Similarly, Galois symmetry with respect to
    $\tau$ applied to $S_{0i}=d_i$ gives us: $\delta_0=\delta_1=d_4$ and
    $\delta_2=\delta_3$.  With this in mind we set $e_1=\e_0\e_2$,
    $e_2=\e_0\e_3$, $e_3=\e_0\e_4$ and $a=\delta_0\delta_2$.  Applying $\sigma$ and $\tau$ we obtain:
    $$S=\mtx{ 1&d_1&d_2&ad_2&d_4\\d_1&1&e_1d_2&e_2
    ad_2&e_3d_4\\d_2&e_1d_2&S_{22}&S_{23}&S_{24}\\ a
    d_2&e_2ad_2&S_{23}&S_{22}&a
    S_{24}\\d_4&e_3d_4&S_{24}&aS_{24}&S_{44}}.$$
  Since $\sigma(d_2)=e_1d_2/d_1=e_2d_2/d_1$ we immediately see that $e_2=e_1$.
    Orthogonality then implies that either $S_{24}=0$ or $e_1=e_3$, and
    $$
    \{d_1+1/d_1, d_2^2/d_1, d_4^2/d_1, (S_{22}+a S_{23})/d_2, S_{22}S_{23}/d_2^2\} \subset \Z.
    $$

    We claim that $S_{24}=0$.  If $e_i=1$ for all $i$ then orthogonality of the
    first two rows gives: $2=-2d_2^2/d_1-d_4^2/d_1$ with $d_1$ negative.
    If $e_i=-1$ for each $i$ then orthogonality of
    the first two rows gives us: $2=2d_2^2/d_1+d_4^2/d_1$.
    In either case, we have: $2=2x+y$ for some $x,y\geq 1$, which is absurd.

    So we may assume that $S_{24}=0$ and $-e_3=e_1=e_2$.  In particular, the
    $FP$-dimension must be one of the first two rows.  Therefore, $a=1$ and $d_1>0$.  Orthogonality now implies:
    \begin{eqnarray}
      1+e_3&=&0,\\
      1+e_1d_1+S_{22}+S_{23}&=&0,\\
      1+e_3d_1+S_{44}&=&0\,.
    \end{eqnarray}

    Thus $e_2=e_1=1=-e_3$, $S_{44}=d_1-1$ and $1+d_1+S_{22}+S_{23}=0$. Note
    that this implies $M=(1+d_1)/d_2\in\BZ$.

    Using the twist equation (\ref{twisteq}) we proceed as in the proof of
    \propref{prop:no(1,-1,-1)} to obtain: $d_4=\pm (d_1-1)$, $p^{+}=\pm i\sqrt{3}(d_1-1)$ and
    $p^{+}/p^{-}=-1$ and $\theta_4 \in \mu_3$. Thus,
    we have
    $$
    p^++p^-=2(1+d_1^2)+2 d_2^2 Re(\theta_2+\theta_3) -(d_1-1)^2 = 0
    $$
    or  $2 Re(\theta_2+\theta_3) = -M^2$.
    Setting $N=d_2^2/d_1$, we obtain the Diophantine equation $(M^2-2)N=6$ from
    orthogonality of the first two rows (\textit{i.e.} $d_1^2-4d_1+1=2d_2^2$).
    Since each of $M$ and $N$ are positive integers we obtain $(M,N)=(2,3)$ as the only solution.  Therefore, $Re(\theta_2+\theta_3)=-2$. Hence $\theta_2=\theta_3=-1$, and $\BF_T = \BQ_3$. However, we also find $d_1 = 5+2 \sqrt{6} \not \in \BQ_3$ which contradicts Theorem \ref{t:cong1}.
  \end{proof}

  Two cases remain: either $\Gal(\CC)$ is generated by $\hs=\(0\;1\)\(2\;3\)$, or contains $\hs$ and is isomorphic to $\Z_2\times\Z_2$ acting transitively on
  $\{0,1,2,3\}$ fixing the label $4$. In either case, $\exp(\Gal(\CC))=2$ so $\BK_\CC$ is a multi-quadratic extension of $\BQ$.
  \begin{prop}
    \label{prop: Rank5DisjointTranspositions}
    There is no self-dual non-integral modular category $\mcC$ of rank 5 such that
    every non-trivial element of $\Gal(\mcC)$ is a product of two disjoint
    transpositions.
  \end{prop}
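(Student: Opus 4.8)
Two Galois groups remain: $\Gal(\mcC)=\langle(0\,1)(2\,3)\rangle\cong\BZ_2$, and $\Gal(\mcC)=\langle(0\,1)(2\,3),(0\,2)(1\,3)\rangle\cong\BZ_2\times\BZ_2$ acting regularly on $\{0,1,2,3\}$ and fixing the label $4$ (after relabelling). In either case $\exp(\Gal(\mcC))=2$, so $\BK_\mcC$ is multi-quadratic over $\BQ$ and, by Corollary~\ref{201302011406}, every even modular representation $\rho$ of $\mcC$ has level dividing $240$. Since the label $4$ is $\Gal(\mcC)$-fixed, the Frobenius--Perron character cannot be $\phi_4$ (otherwise $S_{i4}/d_4\in\BZ$ for all $i$ and $\mcC$ would be integral, contradicting $|\langle 0\rangle|\geq 2$ via \lemmaref{integrallemma}); relabelling inside $\langle 0\rangle$ we may therefore assume $\FPdim=\phi_0$, so $\mcC$ is pseudo-unitary, $d_i=\FPdim(V_i)>0$ for all $i$, and $d_4\in\BZ_{>0}$.

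\textbf{Shape of $S$ and orthogonality.} Next I would pin down $S$. Applying \lemmaref{l:4.5} to each non-trivial element of $\Gal(\mcC)$ (each fixes $4$, hence has a fixed point) yields $S_{00}=S_{11}=1$ and $S_{22}=S_{33}$ --- and, in the Klein $4$ case, $S_{22}=S_{33}=1$ as well --- and identifies every off-diagonal entry of $S$ with $\pm$ a product of dimensions prescribed by the permutations, leaving free only the sign functions $\e_\s$ and the entries $S_{22},S_{44}$; in the Klein $4$ case this determines $S$ from $d_1,d_2,d_3,d_4$ and three signs. Orthogonality of the rows of $S$ then (i) forces the $\Gal(\mcC)$-trace and norm of $d_1$ and of $d_2$ to be rational integers, (ii) solves for $S_{22},S_{44}$ and the remaining entries in terms of the $d_i$ and the signs, and (iii) eliminates the constant sign patterns (for example the all-$+$ or all-$-$ pattern gives an equation $2=2x+y$ with $x,y\geq 1$) using $d_i>0$, exactly as in the proof of \propref{prop:no(1,-1,-1)}. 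This leaves a short list of sign patterns.

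\textbf{Twist equation and conclusion.} For each surviving pattern I would evaluate the twist equation \eqref{twisteq} at the pairs $(j,k)$ involving the label $4$, and at $(0,j),(j,j)$ for a second label: the vanishing $S$-entries force $\theta_1=1$ (together with analogous relations among the other twists), after which each remaining twist satisfies a quadratic $\theta^2+c\,\theta+1=0$ with $c\in\BZ$, $c\notin\{0,-1,\pm 2\}$, hence $\theta\in\mu_3$; one then reads off $p^{+}=\pm i\sqrt 3\cdot(\text{a rational multiple of some }d_i)$, so $D^2=3(\cdots)^2$, and since $\mcC$ is self-dual (so $D\in\BK_\mcC$, as in the proof of \propref{prop:no(1,-1,-1)}) we get $\sqrt 3\in\BK_\mcC$. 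Substituting back into $p^{+}=\sum_i d_i^2\theta_i$ and invoking \lemmaref{l:vanishing sum} then forces a further twist to equal $-1$ and a relation of the form $2d_j^2=(d_i\pm 1)^2$, whence $\sqrt 2\in\BK_\mcC$. In the $\BZ_2$ case $[\BK_\mcC:\BQ]=2$, so $\BK_\mcC=\BQ(\sqrt 3)$ cannot contain $\sqrt 2$ --- contradiction. The \emph{hard case} is the Klein $4$ group, where $[\BK_\mcC:\BQ]=4$ and $\sqrt 2,\sqrt 3$ can coexist in $\BK_\mcC$: here I would finish either by combining $\sqrt 2,\sqrt 3\in\BK_\mcC$ and $D^2=3(\cdots)^2$ with the Cauchy Theorem (the primes dividing $D^2$ and $N=\ord(T)$ coincide) and $n\mid 240$ to bound the primes involved, played off against the three orthogonality relations among $d_1,d_2,d_3,d_4$ from rows $1,2,3$; or, since in this case $\rho$ is a $5$-dimensional even admissible representation of level dividing $240$ whose $\ft$-spectrum is rigidly constrained by $\hs=(0\,1)(2\,3)$ (pairing $\theta_0\leftrightarrow\theta_1$ and $\theta_2\leftrightarrow\theta_3$ as Galois conjugates, by \thmref{t:Galois}), by invoking the tables of low-level irreducible $\SL{n}$-representations together with \lemmaref{l:1}, as in the proofs of \propref{prop: Rank5Z3} and \propref{prop: Rank5Z4}. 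The main obstacle is this last step in the Klein $4$ case.
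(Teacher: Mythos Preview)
Your approach is genuinely different from the paper's, and it has a real gap that you yourself identify.

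The paper does \emph{not} attempt to pin down $S$ via Galois symmetry, orthogonality, and the twist equation at all. Instead it works uniformly in both Galois cases (the only input used is $\exp(\Gal(\mcC))=2$) and argues purely representation-theoretically: taking an even modular representation $\rho$ of level $n\mid 240$, it shows successively that $5\nmid n$, then $16\nmid n$, then $8\nmid n$, then $4\nmid n$, each time by decomposing $\rho$ into irreducibles of prime-power level, reading off their $\ft$-spectra from the tables of low-degree $\SL{p^\lambda}$-representations, and deriving a contradiction either with \lemmaref{l:1} (disjoint $\ft$-spectra), with parity, or (in the $5\mid n$ case) by feeding the resulting explicit form of $t$ back into a single twist equation at the fixed label $4$ and using $s_{i4}/s_{04}\in\BZ$. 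Once $n\mid 12$ with $4\nmid n$, $\FSexp(\mcC)$ is small enough that $\mcC$ is forced to be integral by \cite[Thm.~3.1]{BR1}, contradicting the hypothesis.

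Your route, by contrast, tries to transplant the argument of \propref{prop:no(1,-1,-1)} to a situation with a different orbit structure. Two concrete problems: (i) in the $\langle(0\,1)(2\,3)\rangle$ case your claim that ``vanishing $S$-entries force $\theta_1=1$'' and that the twists satisfy the particular quadratics $\theta^2+c\theta+1=0$ with $c\notin\{0,-1,\pm 2\}$ is not established---the label $1$ is no longer a fixed point and the pattern of zeros of $S$ is different, so the twist equations you would actually obtain do not match the ones in \propref{prop:no(1,-1,-1)}; (ii) in the Klein~$4$ case you correctly note that $\sqrt2,\sqrt3\in\BK_\mcC$ is no longer a contradiction, and the two fallback plans you sketch (Cauchy Theorem plus Diophantine analysis, or the $\SL{n}$ tables) are precisely where the work lies. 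The paper's approach sidesteps both issues by never needing to separate the two Galois groups and by never solving for the $S$-matrix entries; what it buys is a uniform and complete argument at the cost of a lengthy case analysis on the level.
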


  \begin{proof}
    In the following series of reductions, we will show that $\FSExp\(\mcC\)$
    can only be 2,3,4,6. In particular, $\mcC$ is integral by \cite[Thm.
    3.1]{BR1}, a contradiction.

Let $\rho$ be an even modular representation of $\CC$ of level $n$.
Without loss of generality we may assume that $\hs=(0\;1)(2\;3)\in\Gal(\CC)$ for some $\s \in \Gal(\BQ_n/\BQ)$. By Theorem \ref{t:Galois} (Galois symmetry),
\begin{equation} \label{eq:t_form}
\rho(\ft)= t=\diag(t_0, \s^2(t_0), t_2, \s^2(t_2), t_4)\,.
\end{equation}
Moreover, $s= \rho(\fs)$ is a real symmetric matrix in $GL(5, \BQ_n)$ of order 2 . Since $\tau^2(t_4) = t_{\htau(4)} =t_4$ and $\tau(s_{i4}/s_{04}) = s_{i \htau(4)}/s_{0\htau(4)} =s_{i4}/s_{04}$
for all $\tau\in \AQab$, $t_4 \in \ol\mu_{24}$ and $s_{i4}/s_{04} \in \BZ$ for all $i=0,\dots,4$.

 By \corref{201302011406},
    $n \mid 240$. We first show that $n \mid 48$, \emph{i.e.} $5\nmid n$.

    Suppose $5\mid n$. Then $\rho \cong (\xi_5 \o \chi) \oplus \rho_1$ where
    $\rho_1$ is an even subrepresentation of $\rho$, $\xi_5$ and $\chi$ are
    irreducible representations of $\SL{5}$ and $\SL{48}$ respectively, and $\xi_5$ is of level 5. Since $\deg\xi_5 \ge 2$, $\deg \chi \le 2$. However, if $\deg\chi=2$, then $\deg \xi_5 =2$ and $\deg \rho_1=1$. By Table \ref{tableA1}, the $\ft$-spectrum of $\xi_5$ is $\{\zeta, \ol \zeta\}$ for some $\zeta_5 \in \mu_5$. Thus the orders of the eigenvalues of $\xi_5 \o \chi$ are multiple of  $5$, and so the $\ft$-spectra of $\xi_5 \o \chi$ and $\rho_1$ are disjoint; this contradicts Lemma \ref{l:1}. Therefore, $\chi$ is linear.

    Now, we set $\rho'$ be the modular representation of $\CC$ equivalent to $\chi\inv \o \rho$ and $\rho_1' = \chi\inv \o \rho_1$.  Then $\rho' \cong \xi_5  \oplus \rho_1'$.

    If $\deg \xi_5 = 5$, then
    $\xi_5\cong \rho'$, but this contradicts Lemma \ref{l:inadp}. Therefore,
    $\deg \xi_5 < 5$.

    If $\deg \xi_5 = 4$, then the $\ft$-spectrum of $\xi_5$ is equal to $\mu_5$ but $\rho'_1$ is linear.  Thus, $\xi_5$ and
    $\rho'_1$ have disjoint $\ft$-spectra.  Therefore, $\deg \xi_5 = 4$ is not possible.

    If $\deg \xi_5 = 2$, then $\xi_5$ is odd and the $\ft$-spectrum of $\xi_5$ is $\{\w, \ol \w\}$ for some $\w \in \mu_5$. Thus, $\chi$ is odd and so are $\rho'$ and $\rho'_1$. If $\rho'_1$ is reducible, then $\rho'_1 \cong
    \rho'_2 \oplus \rho'_3$ for some representations $\rho'_2$ and $\rho'_3$. We may assume $\deg \rho'_3=1$. By Lemma \ref{l:1}, the $\ft$-spectrum of $\rho'_2$ must contain $\w$ or $\ol\w$. Therefore, $\rho'$ is also irreducible and has the same $\ft$-spectrum $\xi_5$. However, this means
   $\rho'_3$ and $\xi_5 \oplus \rho'_2$
    have disjoint $\ft$-spectra.  Therefore, $\rho_1$ must be irreducible and the
    $\ft$-spectra of $\rho'_1$ and $\xi_5$ are not disjoint. This implies $\rho'_1$ is of level 5, and it must be even, a contradiction. Therefore, $\deg \xi_5 \ne 2$.

    If $\deg \xi_5 = 3$, then $\xi_5$ is even and so are $\chi$ and $\rho'$. We may assume $\rho \cong \xi_5 \oplus \rho_1$ by replacing $\rho$ with $\rho'$ if necessary. The $\ft$-spectrum of $\xi_5$
    is $\{\w, \ol \w, 1\}$ for some $\w \in \mu_5$.

    If  $\rho_1$ is reducible, then $\rho_1$ is a direct sum $\chi_1 \oplus \chi_2$ of linear characters. In view of Lemma \ref{l:1}, both of $\chi_1$ and $ \chi_2$ are the trivial character and so $\rho$ is of level 5.  If $\rho_1$ is irreducible, then the $\ft$-spectrum of $\rho_1$ cannot contain $\w$ or $\ol \w$ for otherwise $\rho_1$ is the level 5 degree 2 irreducible representation which is odd. Thus, 1 is an eigenvalue of $\rho_1(\ft)$ and so $\rho_1$ is the level 2 degree 2 irreducible representation  with $\ft$-spectrum $\{1, -1\}$. In particular, $\rho$ is of level 10. Hence, by \eqref{eq:t_form}, we find
$$
\rho(\ft) = t =\diag(\w, \ol\w, 1,1,\pm 1) \text{ or }\diag(1, 1, \w,\ol \w, \pm 1)
$$
for both cases of $\rho_1$. Moreover, $\BF_t =\BQ_5$ and $\BF_s$ is a real subfield of $\BF_t$. Therefore, $\BF_s = \BQ(\sqrt{5})$. Since both generators of $\Gal(\BQ_5/\BQ)$ have the same non-trivial restriction on $\BQ(\sqrt{5})$, we can assume $\s: \w \mapsto \w^2$ and $\hs
    = (0\;1)(2\;3)$.  By the twist equation (\ref{twisteq}), we find
    \begin{equation} \label{eq:44}
      s_{44} = s_{04}^2 t_0 + s_{14}^2 t_1 + s_{24}^2 t_2 +  s_{34}^2 t_3+ s_{44}^2 t_4\,.
    \end{equation}
    Note that $s_{i4}^2$ is fixed by $\s$ for all $i$, and $s_{24}^2 = s_{34}^2$,
    $s_{04}^2 = s_{14}^2$. By applying $\s$ to \eqref{eq:44},
    \begin{equation}
      \s(s_{44})= \e_\s(4) s_{4\hs(4)}=  \e_\s(4) s_{44} = s_{40}^2 t^2_0 + s_{14}^2
      t^2_1 + s_{24}^2 t^2_2 + s_{34}^2 t^2_3+ s_{44}^2 t_4\,.
    \end{equation}
    These equations imply
    \begin{equation}\label{eq:twist_sum}
      (1-\e_\s(4))s_{44} = s_{40}^2 ((t_0 + t_1)- (t^2_0 + t^2_1)) + s_{24}^2 (t_2 +
      t_3-(t^2_2 + t^2_3))\,.
    \end{equation}

    If $t=\diag(\w, \ol\w, 1,1,\pm 1)$, then $(1-\e_\s(4))s_{44} = s_{40}^2 ((\w + \w^4)- (\w^2 + \w^3))$.
    Since the right hand side of this equation is non-zero, $s_{44}\ne 0$ and
    $\e_\s(4)=-1$. Thus, we obtain
    \begin{eqnarray*}
      0 & = & s_{40}^2 (\w + \w^4+ \w^2 + \w^3) + 4 s_{24}^2 + 2 s_{44}^2t_4 \\
        & = & - s_{40}^2  + 4 s_{24}^2 + 2 s_{44}^2 t_4\,
    \end{eqnarray*}
    and hence
    $$
    1 = 2\(  2\frac{s_{24}^2}{s_{04}^2} +  \frac{s_{44}^2}{s_{04}^2} t_4\)\,.
    $$
    Since $s_{j4}/s_{04} \in \BZ$ for all $i$, we find $2 \mid 1$, a contradiction.
    Therefore, $t= \diag(1, 1, \w,\ol \w, \pm 1)$ and the equation \eqref{eq:twist_sum} becomes
    \begin{equation*}
      (1-\e_\s(4))s_{44} = s_{24}^2 ((\w + \w^4)- (\w^2 + \w^3))\,.
    \end{equation*}

    If $s_{24} \ne 0$, then $s_{44} \ne 0$ and $\e_\s(4)=-1$. By the same argument
$$
   \(\frac{s_{24}}{s_{04}}\)^2= 4 + 2 \(\frac{s_{44}}{s_{04}}\)^2 t_4\,.
$$
The integral equation forces $t_4 = 1$ and so $s_{24}^2/2 =2s_{04}^2+ s_{44}^2$. By the unitarity of $s$, we also have
$$
1 = 2 s_{24}^2 + 2s_{04}^2 + s_{44}^2 = \frac{5}{2} s_{24}^2.
$$
This implies $s_{24} =\pm \sqrt{\frac{2}{5}} \in \BQ(\sqrt{5})$ and hence $\sqrt{2} \in \BQ(\sqrt{5})$, a contradiction. Therefore, $s_{24} =0$ and hence $s_{34}=0$. Now, the equation \eqref{eq:44} becomes $s_{44} = 2 s_{04}^2  + s_{44}^2 t_4$. In particular, the integer $s_{44}/s_{04}$ is a root of $ t_4 X^2 -X +2=0$.  This forces $t_4=-1$ and $s_{44}/s_{04} = 1$ or $-2$. By the unitarity of $s$ again, $1 =3 s_{04}^2$ or $1= 6 s_{04}^2$. Both equations imply $\sqrt{3} \in \BQ(\sqrt{5})$, a contradiction.  Now, we can conclude that $5 \nmid n$, so that $n\mid 48 $.

Next we show that $n\mid 24$, \emph{i.e.} $16\nmid n$.

    Suppose to the contrary that $16 \mid n$. Then $\rho \cong (\xi_{16}\o \chi) \oplus \rho_1$ for some subrepresentation
    $\rho_1$ of $\rho$, an irreducible representations $\xi_{16}$ of level 16, and
    an irreducible representation $\chi$ of $\SL{3}$. Then $\deg \xi_{16}=3$, and
    $\deg \chi =1$ and hence they are both even. By tensoring with $\chi\inv$, we may assume $\rho \cong
    \xi_{16} \oplus \rho_1$ for some even subrepresentation $\rho_1$ of $\rho$. The $\ft$-spectrum of $\xi_{16}$ is $\{\w, -\w, \g\}$ for some $\w \in \mu_{16}$ and $\g \in \mu_8$.
    Since  $\deg
    \rho_1 =2$, the level of $\rho_1$ cannot be 16 and so $\pm \w$ are not in the $\ft$-spectrum of $\rho_1$.       Therefore, $\g$ must be an
    eigenvalue of $\rho_1(\ft)$ and hence $\rho_1$ is an irreducible representation
    of level 8. Thus, the $\ft$-spectrum of $\rho_1$ is $\{\g, -\ol\g\}$  (cf. Table \ref{tableA1}).  In particular, $\rho$ is of
    level $n=16$. In view of \eqref{eq:t_form},
    \begin{equation*}
      \rho(\ft)= t =  \diag(\g, \g, \w, -\w,  -\ol\g) \text{ or  }\diag(\w, -\w, \g, \g,  -\ol\g).
    \end{equation*}

    By the (\ref{twisteq}), we find
    \begin{eqnarray*}
      s_{44} &=& {\ol\g}^2 (s_{04}^2 t_0 + s_{14}^2 t_1 + s_{24}^2 t_2 + s_{34}^2 t_3- s_{44}^2 \ol\g)\\
      &=& s_{04}^2 {\ol\g}^2 (t_0 +  t_1) + s_{24}^2 {\ol\g}^2 (t_2 +  t_3)+ s_{44}^2 \g\,.
    \end{eqnarray*}

    If $t =  \diag(\g, \g, \w, -\w,  -\ol\g)$, then
     $s_{44}=2 s_{04}^2 \ol \g+ s_{44}^2 \g$.
    The imaginary parts of both sides of this equation
    imply $ 2 s_{04}^2=
      s_{44}^2$.
    Therefore, $\frac{s_{44}}{s_{04}} = \pm \sqrt{2}$ is not an integer, a
    contradiction.

    If $t =  \diag(\w, -\w, \g, \g, \g')$, then
    $s_{44}=2 s_{24}^2 \ol \g+ s_{44}^2 \g$.
    If $s_{24} \ne 0$, then by the same argument as the preceding case we will arrive the conclusion that $s_{44}/s_{24} \not\in \BQ$. However, this is absurd as both $\frac{s_{44}}{s_{04}}$
    and $\frac{s_{24}}{s_{04}}$ are integers. Therefore, $s_{24}=0$ and hence
    $s_{34}=0=s_{44}$. Orthogonality of $s$ and the action $\s$ imply
    \begin{equation*}
      s_{04}=\pm \frac{1}{\sqrt{2}}, \quad s_{14}=-\e_\s(0)s_{04}, \quad s_{1j} = \e_\s(0) s_{0j}
    \end{equation*}
    for $j = 0,\dots,3$. In particular, $s^2_{12}= s_{02}^2$. Consider the twist equation
    \begin{equation*}
      s_{22}=\g^2 (s_{20}^2 \w - s_{21}^2 \w + (s^2_{22} + s^2_{23})\g) =   (s^2_{22}
      + s^2_{23})\g^3\,.
    \end{equation*}

    This implies $s_{22}=s_{23}=0$ and hence $s_{33}=0$. Consequently,
    the third and the fourth rows of $s$ are multiples of $(1,\e_\s(0),0,0,0)$. This contradicts that $s$ is invertible.

Next we show that $n\mid 12$, \emph{i.e.} $8\nmid n$.  In particular $\Gal(\CC)\cong \BZ_2$.

Since $n\mid 24$,  $\Gal(\BQ_n/\BQ)$ has exponent $2$.
  Galois symmetry and \eqref{eq:t_form} imply
  \begin{equation} \label{eq:t_form2}
   t =\diag(t_0, t_0, t_2,
    t_2, t_4).
  \end{equation}
 In particular, $t$ has at most 3 distinct eigenvalues. By \cite{TubaWenzl}, every irreducible subrepresentation of $\rho$ has degree $\le 3$. Thus,  if $\xi \o \chi$ is isomorphic to an irreducible subrepresentation of $\rho$ for some representations $\xi, \chi$ of ${\rm SL}(2,\Z)$, then $\xi$ or $\chi$ must be linear.

  Suppose that $8\mid n$. Then $\rho \cong (\xi_{8}\o \chi)
    \oplus \rho_1$ for some representations $\xi_8, \chi$ and $\rho_1$ of ${\rm SL}(2,\Z)$ such that
   $\xi_8$ is irreducible of level 8, $\chi$ is irreducible of level 1 or 3, and $\rho_1$ is even.    Since $\deg \xi_8 \ge 2$, $\deg \chi =1$.  Therefore,  $\chi$ is even,  and so is $\xi_8$. By tensoring with $\chi\inv$, we may assume $\rho \cong \xi_{8} \oplus \rho_1$.

     Suppose $\deg \xi_8=3$. Then the eigenvalues of $\xi_8(\ft)$ are
    $\{\w, -\w, \g\}$ for some $\w \in \mu_8$ and $\g \in \mu_4$   (cf. Table \ref{tableA1}).
   In view of \eqref{eq:t_form2}, the $\ft$ spectrum of $\rho_1$ is $\{\w, -\w\}$. In particular,   $\det \rho_1(\ft)= \pm i$ which contradicts that $\rho_1$ is even (cf. Remark \ref{r1}). Therefore, $\deg \xi_8 =2$, and the $\ft$-spectrum of $\xi_8$ is $\{ \g, -\ol\g\}$ for some $\g \in \mu_8$.
    Since $\rho_1(\ft)$ and $\xi_8$ must have a common
    eigenvalue, the level of $\rho_1$ is also a multiple of $8$. By the preceding
    argument, $\rho_1 = \xi'_8 \oplus \rho_2$ for some degree 2 irreducible representation of level 8,  $\xi'_8$, and
    a degree 1 even representation $\rho_2$. However, $\rho_2$ and
    $\xi_8\oplus\xi'_8$ have disjoint $\ft$-spectra, a contradiction. Therefore, $n \mid 12$

    Finally we will show that the
    Frobenius-Schur exponent $N$ must be $2,3,4$ or $6$. Since $N\mid n\mid 12$, it is enough to show $ 4\nmid n$.

    Suppose $4 \mid n$.  We claim that $\rho$ admits a subrepresentation isomorphic to $\xi_4 \o \chi$ for some irreducible representations $\xi_4$ of level 4 and degree $> 1$ and  $\chi \in \Rep(\SL{3})$. Assume the contrary. Since any linear subrepresentation of $\rho$ can only have a level dividing $6$, $\rho$ admits an irreducible subrepresentation $\rho'$  of degree $ > 1$ and level a multiple of 4. Then $\rho'\cong \xi_4 \o \chi$ for some level 4 degree 1 representation $\xi_4$ and an irreducible representation $\chi \in \Rep(\SL{3})$.  Then $\chi$ must be odd since $\xi_4$ is odd. This forces $\chi$ to be of level 3 and degree 2. In particular, $\rho'$ is of level 12 and the $\ft$-spectrum of $\rho'$ is a subset of $\mu_{4*}$. Now, $\rho \cong \rho' \oplus \rho_1$ for some even representation $\rho_1$ of degree 3. By Lemma \ref{l:1}, the level of $\rho_1$ is also a multiple of $4$. Following the same reason, $\rho_1$ admits a degree 2 level 12 even irreducible subrepresentation $\rho''$ with its $\ft$-
spectrum a subset of $\mu_{4*}$. Now, $\rho \cong \rho'\oplus \rho''\oplus \rho_2$ for some degree 1   even representation $\rho_2$. However, $\rho_2$ and $\rho'\oplus \rho''$  have disjoint
    $\ft$-spectra, a contradiction.
This completes the proof.
\end{proof}

\newpage
\appendix
\section{Irreducible Representations of Degree $\leq 4$}
\renewcommand{\thetable}{A.\arabic{table}}
The 12 degree one representations $C_j$ of ${\rm SL}(2,\Z)$, $j=0,1, \dots, 11$ are
defined by
$C_j(\ft) = e^{2\pi j i/12}$. Thus, $C_j$ is even if, and only if, $j$ is even
which is equivalent to the fact that $\ord(C_j) \mid 6$. The $\ft$-spectra of
irreducible representations of degree $\le 4$ and of level $p^\l$ are
illustrated in the following table.
\begin{table}[h!b!p!]
\caption{$\ft$-spectra of level $p^\l$ irreducible representations of degree
$\le 4$ }
\begin{tabular}{cccl}
\hline
degree & parity & level  & $\ft$-spectra \\
\hline
2 & even & 2 & $\{1, -1\}$ \\
 & odd & 3  & $\{e^{2 \pi r i/3}, e^{-2 \pi (r+1)  i/3}\}$, $r=0,1,2$ \\
 & odd & 4 & $\{i, -i\}$ \\
 & odd & 5 & $\{e^{2 \pi  i/5}, e^{-2 \pi  i/5}\}, \{e^{4 \pi  i/5}, e^{-4 \pi
i/5}\}$ \\
 & even & 8 & $\{e^{5 \pi  i/4},e^{7 \pi  i/4}\}$, $\{e^{ \pi  i/4},e^{3 \pi
i/4}\}$ \\
 & odd & 8 & $\{e^{3 \pi  i/4},e^{5 \pi  i/4}\}, \{e^{7 \pi  i/4},e^{ \pi
i/4}\}$ \\
\hline
3 & even & 3 & $\{e^{2 \pi (r+1) i/3}, e^{2 \pi (r+2)  i/3}, e^{2 \pi r
i/3}\}$, $r=0,1,2$ \\
 & odd & 4 & $\{i, -1, 1\}$, $\{-i, 1, -1\}$\\
 & even & 4 &  $\{-1, -i, i\}$, $\{1, i, -i\}$ \\
 & even & 5 & $\{1, e^{2 \pi r  i/5}, e^{-2 \pi r  i/5}\}$, $r=1,2$ \\
 & even & 7 & $\{e^{4 \pi   i/7}, e^{2 \pi   i/7}, e^{8 \pi   i/7}\}$,\\
      &  &  &  $\{e^{-4 \pi   i/7}, e^{-2 \pi   i/7}, e^{-8 \pi   i/7}\}$\\
& odd & 8 & $\{-1, -e^{ \pi i/4}, e^{\pi i/4} \}$, $\{1, e^{ \pi i/4}, -e^{\pi
i/4} \}$\\
      &  &  &  $\{-1, -e^{ 3\pi i/4}, e^{3 \pi i/4} \}$, $\{1, e^{ 3\pi i/4},
-e^{3 \pi i/4} \}$\\
& even & 8 & $\{-i, -e^{ \pi 3i/4}, e^{\pi 3i/4} \}$, $\{i, e^{ \pi 3i/4},
-e^{\pi 3i/4} \}$\\
      &  &  &  $\{i, -e^{\pi i/4}, e^{ \pi i/4} \}$, $\{-i, e^{ \pi i/4},
-e^{\pi i/4} \}$ \\
& odd & 16 & $\{-e^{\pi i/4}, e^{ \pi i/8}, -e^{\pi i/8} \}$, $\{e^{\pi i/4},
-e^{ \pi i/8}, e^{\pi i/8} \}$\\
      &  &  &  $\{e^{\pi i/4}, e^{ 5\pi i/8}, -e^{5 \pi i/8} \}$, $\{-e^{\pi
i/4}, -e^{ 5\pi i/8}, e^{5 \pi i/8} \}$\\
        &  &  &  $\{-e^{\pi 3 i/4}, e^{ 3 \pi i/8}, -e^{ 3 \pi i/8} \}$,
$\{e^{\pi 3 i/4}, -e^{ 3 \pi i/8}, e^{ 3 \pi i/8} \}$\\
        &  &  &  $\{e^{3 \pi i/4}, -e^{7\pi i/8}, e^{7 \pi i/8} \}$, $\{-e^{3
\pi i/4}, -e^{7 \pi i/8}, e^{7 \pi i/8} \}$\\
& even & 16 & $\{-e^{3\pi i/4}, e^{ 5 \pi i/8}, -e^{\pi 5i/8} \}$, $\{e^{3\pi
i/4}, -e^{ 5 \pi i/8}, e^{\pi 5i/8} \}$\\
      &  &  &  $\{e^{3\pi i/4}, -e^{ \pi i/8}, e^{ \pi i/8} \}$, $\{-e^{3 \pi
i/4}, e^{\pi i/8}, -e^{\pi i/8} \}$\\
        &  &  &  $\{-e^{\pi i/4}, -e^{ 7 \pi i/8}, e^{ 7 \pi i/8} \}$, $\{e^{\pi
i/4}, e^{ 7 \pi i/8}, -e^{ 7 \pi i/8} \}$ \\
        &  &  &  $\{-e^{ \pi i/4}, e^{3\pi i/8}, -e^{3 \pi i/8} \}$, $\{e^{ \pi
i/4}, -e^{3 \pi i/8}, e^{3 \pi i/8} \}$\\
\hline
4 & odd & 5 & $\{e^{2 \pi i/5}, e^{4 \pi i/5}, e^{6 \pi i/5}, e^{8 \pi i/5}\} $
\\
&even &5 &$\{e^{2 \pi i/5}, e^{4 \pi i/5}, e^{6 \pi i/5}, e^{8 \pi i/5}\} $ \\
 & odd & 7 & $\{1,e^{2 \pi i/7}, e^{8 \pi i/7},e^{4 \pi i/7}\}$\\
  & odd & 7 & $\{1,e^{12 \pi i/7}, e^{6 \pi i/7},e^{10 \pi i/7}\}$\\
 & odd & 8 &  $\{e^{\pi i/4},e^{3\pi i/4},e^{5\pi i/4},e^{7\pi i/4}\}$ \\
 & even & 8 &  $\{e^{\pi i/4},e^{3\pi i/4},e^{5\pi i/4},e^{7\pi i/4}\}$ \\
 & odd & 9 & $\{e^{2 \pi i(\frac{1}{9}+\frac{r}{3})},  e^{2 \pi
i(\frac{4}{9}+\frac{r}{3})}, e^{2 \pi i(\frac{7}{9}+\frac{r}{3})}, e^{2 \pi
i(\frac{1}{3}+\frac{r}{3})}\}$, $r=0,1,2$ \\
 &  & & $\{e^{2 \pi i(\frac{8}{9}+\frac{r}{3})},  e^{2 \pi
i(\frac{5}{9}+\frac{r}{3})}, e^{2 \pi i(\frac{2}{9}+\frac{r}{3})}, e^{2 \pi
i(\frac{2}{3}+\frac{r}{3})}\}$, $r=0,1,2$ \\
 & even & 9 & $\{e^{2 \pi i(\frac{1}{9}+\frac{r}{3})},  e^{2 \pi
i(\frac{4}{9}+\frac{r}{3})}, e^{2 \pi i(\frac{7}{9}+\frac{r}{3})}, e^{2 \pi
i(\frac{1}{3}+\frac{r}{3})}\}$, $r=0,1,2$ \\
 &  & & $\{e^{2 \pi i(\frac{8}{9}+\frac{r}{3})},  e^{2 \pi
i(\frac{5}{9}+\frac{r}{3})}, e^{2 \pi i(\frac{2}{9}+\frac{r}{3})}, e^{2 \pi
i(\frac{2}{3}+\frac{r}{3})}\}$, $r=0,1,2$ \\
\end{tabular}
\label{tableA1}
\end{table}

\clearpage
\pagestyle{plain}
{\raggedright\printbibliography}

\end{document}